\documentclass[a4paper,11pt]{article}

\setlength{\oddsidemargin}{2mm}
\setlength{\evensidemargin}{2mm}
\setlength{\textwidth}{160mm}
\setlength{\headheight}{0mm}
\setlength{\headsep}{12mm}
\setlength{\topmargin}{6mm}
\setlength{\textheight}{205mm}

\usepackage{color}
\usepackage{amsmath}
\usepackage{amssymb}
\usepackage{amsfonts}
\usepackage{amsthm}
\usepackage{mathrsfs} 
\usepackage{graphicx}
\usepackage{psfrag}
\usepackage{verbatim}
\usepackage{paralist}
\usepackage{bm}
\tolerance 5000
\parskip   4pt
%\parindent 0pt

% Commands used all the time:

\newcommand{\bbE}{\mathbb{E}}
\newcommand{\bbV}{\mathbb{V}}
\newcommand{\bbX}{\mathbb{X}}
\newcommand{\Var}{\bbV{\rm ar}}
\newcommand{\bbP}{\mathbb{P}}

\newcommand{\bbZ}{\mathbb{Z}}
\newcommand{\bbR}{\mathbb{R}}

\newcommand{\ol}{\bar}

\newcommand{\wh}{\widehat}

\newcommand{\cA}{\mathcal A}

\newcommand{\cZ}{\mathcal Z}

\newcommand{\cR}{\mathcal R}

\newcommand{\rmc}{{\rm c}}
\newcommand{\rmC}{{\rm C}}

\newcommand{\rmd}{{\rm d}}
\newcommand{\rme}{{\rm e}}

\newcommand{\supp}{{\rm supp}}
\newcommand{\sgn}{{\rm sgn}}
\newcommand{\wt}{\widetilde}

\newcommand{\la}{\langle}
\newcommand{\ra}{\rangle}

% Theorems etc:

\newtheorem{theorem}{Theorem}[section]

\newtheorem{lem}[theorem]{Lemma}
\newtheorem{prop}[theorem]{Proposition}
\newtheorem{thm}[theorem]{Theorem}

\numberwithin{equation}{section}

\title{Large Deviations for the Empirical Distribution in the
	General Branching Random Walk}
\author
{Oren Louidor\thanks{oren.louidor@gmail.com, eliadtsai@gmail.com} \\Technion, Israel \and Eliad Tsairi\footnotemark[1]\\Technion, Israel}
\date{}

\begin{document}
\maketitle

\begin{abstract}
We consider the general branching random walk under minimal assumptions, which in particular guarantee that the empirical particle distribution admits an almost sure central limit theorem. For such a process, we study the large time decay of the probability that the fraction of particles in a typical set deviates from its typical value. We show that such probabilities decay doubly exponentially with speed which is either linear in or the square root of the number of generations, depending on the set and the magnitude of the deviation. We also find the rate of decay in each case.
\end{abstract}

\section{Introduction and Results}
\subsection{Setup}
\label{ss:1}
In this manuscript we study probabilities of large deviations in the empirical distribution of the branching random walk, under very general assumptions. We denote this process by the sequence $Z=(Z_n)_{n \geq 0}$ of random point measures on $\bbR$. The law of $Z$ is completely determined by the distribution of $Z_1$, via the recursive relation
\begin{equation}
\label{e:1.1}
Z_0 = \delta_0 
\quad; \qquad
Z_n  \,\big|\, Z_0, \dots, Z_{n-1} \, \overset{\rmd}= \, \sum_{x \in Z_{n-1}} Z_1^{(x)}(\cdot - x) \ ,\ \  n \geq 2 \,,
\end{equation} 
where $\{Z_1^{(x)} :\: x \in Z_{n-1}\}$ are independent and distributed as $Z_1$. Here and after, given a finite point measure $\zeta$, we shall write $(x :\: x \in \zeta)$ for the sequence of atoms of $\zeta$, in an arbitrary order, with each atom $x$ repeating $\zeta(\{x\})$ many times. Moreover, whenever $(x :\: x \in \zeta)$ is used as an index set (e.g. in a summation), each such repetition is thought of as a different index. 

The usual interpretation of the measure $Z_n$ is that of representing a population of particles, whose locations in generation $n$ are given by $(x:\: x \in Z_n)$. Then~\eqref{e:1.1} models an evolution of such population, in which starting from a single particle at the origin, a new generation of particles is formed from the previous generation by having all particles give birth to a random population of offspring and die, with each such offspring population chosen independently and, relative to the location of the parent, according to the same generation-invariant law. We shall refer to this law, which by definition is also the distribution of $Z_1$, as the {\em offspring distribution} or {\em reproduction law}.

The following assumptions on the reproduction law will be imposed: (Below and from this point on, given a Borel measure $\zeta$ on $\bbR$, we use the notation $|\zeta|$ to denote $\zeta(\bbR)$ and for $f: \bbR \to \bbR$, write $\la \zeta, f \ra$ for the integral of $f$ with respect to the measure $\zeta$.)

\noindent {\bf (A1)} $\bbE |Z_1| \log |Z_1| < \infty$.
\\ \noindent {\bf (A2)} $\bbE \la Z_1, x \ra = 0$ and $\bbE \la Z_1, x^2 \ra / \bbE |Z_1|  = 1$.
\\ \noindent {\bf (A3)} $\log 2 \leq b_0 := \text{ess-inf} (\log |Z_1|) \leq \text{ess-sup} (\log |Z_1|) =: b_1 \leq \infty$.
\\ \noindent {\bf (A4)} There exists $r_0 < \infty$ and a discrete subset $\cR \subseteq [-r_0, r_0]$, 
such that $\supp(Z_1) \subseteq \cR$ a.s.
\\ \noindent {\bf (A5)} $\Var \big(|Z_1|\big) + \Var \big(\la Z_1, x \ra \big) > 0$.

\noindent These assumptions are rather general and, with a few exceptions (made only for convenience), are minimal for our main result to hold. See Subsection~\ref{sss:Assumptions} for a further discussion.

Letting $\ol{Z}_n := Z_n / |Z_n|$, it is well known that under (A1) - (A3), almost-surely as $n \to \infty$,
\begin{equation}
\label{e:1}
|Z_n|/(\bbE |Z_1|)^n  \longrightarrow W
\quad, \qquad
\ol{Z}_n(\sqrt{n} \cdot) \Longrightarrow \nu \,,
\end{equation}
where $W$ is a positive random variable with mean $1$ and $\nu$ is the standard Gaussian measure on $\bbR$. In particular, letting $\cA$ be the algebra generated by all subsets of the form $(-\infty, x]$ for $x \in \bbR$, the latter implies that for all $A \in \cA$, 
\begin{equation}
\label{e:1.3}
\lim_{n \to \infty} \ol{Z}_n(\sqrt{n}A) = \nu(A) \quad \text{a.s.} 
\end{equation}
The convergence of $|Z_n|/\bbE|Z_n|$ to a mean $1$ random variable, assuming only (A1) and (A3), was initially shown by Levinson~\cite{levinson1959limiting}. The necessity of Assumption (A1) was shown later by Kesten and Stigum~\cite{KestenStigum66}. The almost-sure weak convergence of $\ol{Z}_n(\sqrt{n} \cdot)$ to $\nu$, assuming only (A1)-(A3), was proved by Biggins in~\cite{biggins1990central}. The latter extended an earlier work by Kaplan~\cite{Kaplan82}, who assumed in addition that the displacements of born particles are independent of each other and of their number (sometimes referred to as the {\em mixed-sample process}).

The goal of the present work is therefore to study asymptotics of probabilities of the form
\begin{equation}
\bbP \big(\ol{Z}_n(\sqrt{n}A) \geq p \big) \,,
\end{equation}
as $n \to \infty$, for $A \in \cA$ and $p \in (\nu(A), 1)$. Although this is a natural problem to consider, as far as we know, the only works in this direction are that of Louidor and Perkins ~\cite{louidor2015large}, where such probabilities were studied, under the additional assumption of independence between the displacements of born particles and their number, and that of Chen and He~\cite{chen2017large}, where such independence is again assumed but assumptions (A3) and (A4) are relaxed (see~\cite{louidor2015large} for a discussion of different large deviation results concerning the branching random walk). 

Our aim here is therefore to extend these results to the general case. We shall show that such probabilities decay double exponentially fast with speed $\sqrt{n}$ or $n$, depending on $A$ and $p$, at rate $I(A,p)$ or $J(A,p)$ respectively. In order to state the full theorem, let us first define the rate functions $I(A,p)$ and $J(A,p)$, corresponding to each regime of decay. To this end, if $n \geq 0$ and $b \in [b_0, b_1] \cap \bbR$ we set:
\begin{equation}
\cZ_n := \big\{\zeta :\: \bbP(Z_n = \zeta) > 0 \big\}
\quad, \qquad
\cZ_n(b) := \big \{\zeta \in \cZ_n :\: |\zeta| = \rme^{bn} \} 
\quad, \qquad
\cZ := \cup_{n \geq 0} \cZ_n \,.
\end{equation}
These are the set of possible particle measures for the process in generation $n$, in generation $n$ and with mean log-reproduction rate $b$, or without any restrictions, respectively. We note that under our assumptions $\cZ_n$, $\cZ$ are always countable, while for some values of $b$ and $n$ we may have $\cZ_n(b) = \emptyset$.  We then let
\begin{equation}
\label{e:7}
\alpha^\pm_n := \sup \big \{\la\ol{\zeta},\,  \pm x \ra
:\: \zeta \in \cZ_n \big\} 
\quad, \qquad
\alpha^\pm := \sup_{n \geq 1} \alpha^\pm_n/n \,.
\end{equation}
That is, $\alpha^\pm$ is the maximal rate of increase/decrease in the average particle height.
It is clear from Assumption (A4) that $|\alpha^\pm| \leq r_0$. Positivity of these quantities as well as the monotonicity of $n \mapsto \alpha^\pm_n$ is given by:
\begin{prop}
\label{p:1.2b}
Under Assumptions (A1) - (A5) we have $\alpha^\pm_n \uparrow \alpha^\pm \in (0, r_0]$ as $n \to \infty$.
\end{prop}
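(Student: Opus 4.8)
The plan is to show that the sequence $(\alpha^\pm_n)_{n\ge 1}$ is superadditive and positive from some generation on; combined with the bound $\alpha^\pm_n\le n r_0$ (already noted from (A4): by induction every atom of a $\zeta\in\cZ_n$ lies in $[-nr_0,nr_0]$), Fekete's lemma then yields $\alpha^\pm_n/n\to\sup_{m\ge1}\alpha^\pm_m/m=\alpha^\pm$, while the monotonicity $\alpha^\pm_n\le\alpha^\pm_{n+1}$ follows from superadditivity together with $\alpha^\pm_1\ge 0$. The latter inequality is a small remark: if $\sup_{\zeta\in\cZ_1}\la\ol\zeta,x\ra$ were negative, say equal to $-c<0$, then $\la\zeta,x\ra\le -c|\zeta|<0$ for every $\zeta\in\cZ_1$, hence $\bbE\la Z_1,x\ra<0$, contradicting (A2); the same applies to $\alpha^-_1$.

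The algebraic core is the superadditivity $\alpha^\pm_{n+m}\ge\alpha^\pm_n+\alpha^\pm_m$. Fix $\zeta\in\cZ_n$ and $\xi\in\cZ_m$ and form the point measure $\eta:=\sum_{x\in\zeta}\xi^{(x)}(\cdot-x)$ in which every atom $x$ of $\zeta$ is made, over the following $m$ generations, to reproduce according to an independent copy of the fixed configuration $\xi$. Since distinct subtrees of the process evolve independently and each of the finitely many events ``$Z_n=\zeta$'' and ``the subtree rooted at $x$ equals $\xi$ after $m$ further steps'' has positive probability, $\eta\in\cZ_{n+m}$. A direct computation gives $|\eta|=|\zeta|\,|\xi|$ and $\la\eta,x\ra=|\xi|\,\la\zeta,x\ra+|\zeta|\,\la\xi,x\ra$, hence $\la\ol\eta,\pm x\ra=\la\ol\zeta,\pm x\ra+\la\ol\xi,\pm x\ra$; taking suprema over $\zeta\in\cZ_n$ and $\xi\in\cZ_m$ proves the claim.

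The remaining and, I expect, most delicate point is the strict positivity of $\alpha^\pm$, which is where (A5) enters, via a dichotomy. If $\Var\la Z_1,x\ra>0$, then $\la Z_1,x\ra$ is nondegenerate with mean $0$, hence strictly positive with positive probability and strictly negative with positive probability, so the corresponding one-step configurations already give $\alpha^+_1>0$ and $\alpha^-_1>0$. The harder case is $\Var\la Z_1,x\ra=0$ (so $\la Z_1,x\ra\equiv 0$ a.s., whence every admissible one-step configuration has barycentre $0$ and no drift is visible in a single generation) together with $\Var|Z_1|>0$. Here one must produce the drift over two generations: using $\bbE\la Z_1,x^2\ra=\bbE|Z_1|>0$ (by (A2)) pick $\zeta_0\in\cZ_1$ with a nonzero atom; since $\la\zeta_0,x\ra=0$ it charges both half-lines, and after possibly reflecting we may assume $s:=\sum_{x\in\zeta_0,\,x>0}x=-\sum_{x\in\zeta_0,\,x<0}x>0$. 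Using $\Var|Z_1|>0$ pick $\zeta',\zeta''\in\cZ_1$ with $|\zeta'|<|\zeta''|$, and let each atom $x$ of $\zeta_0$ with $x>0$ reproduce according to $\zeta''$ and each atom with $x\le 0$ according to $\zeta'$; the resulting $\eta\in\cZ_2$ then satisfies, again because all barycentres vanish, $\la\eta,x\ra=(|\zeta''|-|\zeta'|)\,s>0$, so $\alpha^+_2>0$ and thus $\alpha^+\ge\alpha^+_2/2>0$. Exchanging the roles of $\zeta'$ and $\zeta''$ gives $\alpha^-_2>0$ likewise. The three ingredients together with Fekete's lemma yield the proposition. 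The main obstacle is precisely this second case: one has to see that variability in the number of offspring, even with completely deterministic displacements, suffices to create a macroscopic bias in the empirical mean, obtained by coupling the more fertile reproduction events to the particles that happen to lie above the barycentre.
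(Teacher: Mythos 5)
Your proof is correct and follows essentially the same route as the paper's: superadditivity of $\alpha^\pm_n$ via convolution of configurations, Fekete's lemma for the limit and upper bound $r_0$, and the same dichotomy on whether $\la Z_1,x\ra$ is a.s.\ zero, with positivity in the degenerate case obtained by coupling larger offspring counts to particles above the barycentre. The only cosmetic difference is in that last step, where the paper lets a single designated particle at a fixed positive height $z$ reproduce according to some $\zeta'\in\cZ_1(b')$ with $b'>b_0$ while all other particles use a $\zeta\in\cZ_1(b_0)$, whereas you let the entire positive part of $\zeta_0$ reproduce at the larger rate — both yield $\alpha^+_2>0$ for the same reason, and your aside about ``possibly reflecting'' is unnecessary since the balance $\sum_{x>0}x\,\zeta_0(\{x\})=-\sum_{x<0}x\,\zeta_0(\{x\})$ already follows from $\la\zeta_0,x\ra=0$.
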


As in~\cite{louidor2015large}, we next define the functions 
$\wt{I}^{\pm} : \cA \times [0,1] \to [0, \infty]$ and $\wt{J}: \cA \times [0,1] \to [0,1]$ by,
\begin{equation}
\label{e:1.5}
\wt{I}^\pm(A,p) := \inf \big\{ x \in \bbR_+ :\: \nu(A \mp x) \geq p \big\} \,,
\end{equation}
and
\begin{equation}
\label{e:1.6}
\wt{J}(A,p) := \inf \left\{ u \in [0,1) :\: p \leq \sup_{x \in \bbR} \nu\Big(\tfrac{A}{\sqrt{1-u}} - x\Big) \right\} \,.
\end{equation}
Then the rate functions $I, J: \cA \times [0,1] \to [0, \infty]$ are given by
\begin{equation}
\label{e:1.11a}
I(A,p) := b_0 \inf \big\{  \wt{I}^+(A,p)/\alpha^+ \,,\,\, \wt{I}^-(A,p)/\alpha^- \big\}
\quad, \qquad 
J(A,p) := b_0 \wt{J}(A,p) \,.
\end{equation}

\medskip
We can now state the main theorems of this manuscript.
\begin{thm}
\label{thm:1}
Let $A \in \cA \setminus \{\emptyset\}$ and $p \in (\nu(A),1)$. If $I(A,p) < \infty$ then 
\begin{equation}
\label{e:5}
\lim_{n \to \infty} \frac{1}{\sqrt{n}} \log \Big[-\log 
	\bbP \big(\ol{Z}_n(\sqrt{n}A) \geq p \big)\Big] = I(A,p) \in (0, \infty) \,,
\end{equation}
whenever $I(A, \cdot)$ is continuous at $p$. If $I(A,p) = \infty$ then
\begin{equation}
\label{e:6}
\lim_{n \to \infty} \frac{1}{n} \log \Big[-\log 
	\bbP \big(\ol{Z}_n(\sqrt{n}A) \geq p \big)\Big] = J(A,p) \in (0,\infty) \,,
\end{equation}	
whenever $J(A, \cdot)$ is continuous at $p$.
\end{thm}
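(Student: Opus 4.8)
The plan is to prove a matching lower and upper bound for $-\log\bbP(\ol Z_n(\sqrt nA)\ge p)$ in each of the two regimes, and then invoke the assumed continuity of $I(A,\cdot)$ (resp.\ $J(A,\cdot)$) at $p$ to pass between the slightly-perturbed thresholds $p\pm\eps$ that the two bounds naturally produce. Proposition~\ref{p:1.2b} is used throughout so that $\alpha^\pm\in(0,r_0]$ and the strategies below make sense; I shall also repeatedly use \eqref{e:1}/\eqref{e:1.3}, i.e.\ the almost sure CLT for the empirical measure, applied to sub-trees rooted at an intermediate generation.

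\emph{Lower bound (the ``$\le$'' half, i.e.\ $\bbP$ is not too small).} Take first $I(A,p)<\infty$ and assume the $+$-direction is the cheaper one. Fix $\eps>0$, let $x_\eps:=\wt I^+(A,p+\eps)$, and set $m_n:=\lceil x_\eps\sqrt n/\alpha^+\rceil$, so $m_n=o(n)$. By the definition \eqref{e:7} of $\alpha^+$ and Proposition~\ref{p:1.2b}, for large $n$ there is a configuration $\zeta^{(n)}\in\cZ_{m_n}$ whose normalized mean $\la\ol{\zeta^{(n)}},x\ra$ is $\approx\alpha^+ m_n\approx x_\eps\sqrt n$ and whose spatial spread is $o(\sqrt n)$; a key technical point is to choose $\zeta^{(n)}$ of near-minimal size, $|\zeta^{(n)}|=e^{b_0m_n(1+o(1))}$, so that $\bbP(Z_{m_n}=\zeta^{(n)})\ge\exp(-Ce^{b_0m_n})$ (each particle in generations $0,\dots,m_n-1$ realizes one of boundedly many admissible configurations, each of positive probability). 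On the event $\{Z_{m_n}=\zeta^{(n)}\}$ the sub-trees are i.i.d.\ copies of $Z_{n-m_n}$, and by \eqref{e:1.3} applied to each, together with the fact that every generation-$m_n$ particle sits, at scale $\sqrt n$, at $\approx x_\eps$ and that no sub-tree grossly over-reproduces, the empirical measure at time $n$ is, rescaled by $\sqrt n$, within $o(1)$ of $\nu(\cdot-x_\eps)$; hence $\ol Z_n(\sqrt nA)\ge\nu(A-x_\eps)\ge p+\eps>p$ with conditional probability tending to $1$, and controlling this over the $e^{b_0m_n(1+o(1))}$ sub-trees costs only a further $\exp(-C'e^{b_0m_n})$. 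Thus $\limsup_n\tfrac1{\sqrt n}\log[-\log\bbP]\le b_0x_\eps/\alpha^+$; minimizing over $\pm$ and then letting $\eps\downarrow0$ (continuity of $I(A,\cdot)$ at $p$) gives $\le I(A,p)$. When $I(A,p)=\infty$ one repeats this with $m_n:=\lceil un\rceil$ for $u\in(\wt J(A,p),1)$ and with $\zeta^{(n)}$ now chosen \emph{concentrated}, of diameter $o(\sqrt n)$ about the point $x^\ast\sqrt{(1-u)n}$ realizing the supremum in \eqref{e:1.6}: typical evolution of the sub-trees then produces at scale $\sqrt n$ a measure close to $\nu_{1-u}(\cdot-x^\ast)$, whose $A$-mass is $\ge p$, and $\limsup_n\tfrac1n\log[-\log\bbP]\le b_0u$; $u\downarrow\wt J(A,p)$ gives $\le J(A,p)$.

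\emph{Upper bound (the ``$\ge$'' half).} I would argue by induction along a geometric sequence of scales. Fix $A$ together with the family of translates $A-y$, and for $n\gg m$ condition on $Z_m$, writing $\ol Z_n(\sqrt nA)=\sum_{v\in Z_m}\tfrac{|Z^{(v)}_n|}{|Z_n|}\,\ol Z^{(v)}_n(\sqrt nA-v)$ with independent sub-tree measures $Z^{(v)}_n\overset{\mathrm d}=Z_{n-m}$. If $\ol Z_n(\sqrt nA)\ge p$, then for $B:=\{v\in Z_m:\ol Z^{(v)}_n(\sqrt nA-v)\ge p-\eps\}$ one gets $\sum_{v\in B}|Z^{(v)}_n|\ge\eps|Z_n|$. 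Now split off the (rare, controlled via (A1)/(A3), with $b_1=\infty$ handled by truncation) event that some sub-tree over-reproduces grossly; on the complement, separate the $v\in B$ according to whether the rescaled position $v/\sqrt n$ lies close to an ``optimal shift'' (a point where $\wt I^\pm(A-v/\sqrt n,p-\eps)$ is small). For $v$ far from every optimal shift, its $\approx n$-generation sub-tree must realize the deviation for $A-v/\sqrt n$, which by the inductive hypothesis has probability $\le\exp(-e^{\sqrt n\,I(A-v/\sqrt n,p-\eps)(1-\delta)})$, doubly-exponentially small with exponent bounded below by $\approx\sqrt n\,I(A,p)$ once $v/\sqrt n\to0$ and $\eps\to0$. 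For $v$ near an optimal shift, such particles are forced to have drifted to position $\approx x_\eps\sqrt n$ by generation $m$, and the heart of the argument is a counting/cost estimate — dual to the lower-bound construction and drawn from \eqref{e:7}, (A3), (A4), Proposition~\ref{p:1.2b} — showing that (unless $m\gtrsim x_\eps\sqrt n/\alpha^\pm$) the fraction of the generation-$m$ population so drifted is non-negligible only with probability $\le\exp(-ce^{b_0m})$. A union bound over the $\le e^{b_1m}$ values of $v$, optimized over $m\asymp\sqrt n$, closes the induction and yields $\liminf_n\tfrac1{\sqrt n}\log[-\log\bbP]\ge I(A,p)$. In the regime $I(A,p)=\infty$ every translate $A-y$ also has $I(A-y,\cdot)=\infty$, so the sub-trees cannot be helped by shifting and must instead concentrate; the same scheme with $m\asymp n$ then delivers $\ge J(A,p)$.

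\emph{Main obstacle.} I expect the upper bound to be the crux, for two reasons. First, the induction must be set up so that the prefactors and the $o(1)$'s (including the loss from the $\eps$-thickenings $B$ and from the non-exact scaling $\sqrt n\leftrightarrow\sqrt{n-m}$) do not accumulate over the $\Theta(\log n)$ inductive steps, so that the final exponent is exactly $I(A,p)$ and not $(1-\delta)I(A,p)$ — this forces a careful, quantitative bootstrap rather than a soft one. Second, the counting estimate for drifted (resp.\ concentrated) sub-populations at generation $m$ must be proved under the fully general, possibly dependent reproduction law permitted by (A1)--(A5), where offspring number and displacements are coupled; this is exactly the statement dual to the lower-bound construction, and it is what produces the constants $b_0$ and $\alpha^\pm$. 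On the lower-bound side the one non-routine input is the existence, for every large $m$, of a near-extremal configuration in $\cZ_m$ — drifted at rate $\approx\alpha^\pm$, or concentrated to spread $o(\sqrt m)$ — of near-minimal size $e^{b_0m(1+o(1))}$, together with the needed control of its spread; establishing this from (A3)--(A4) is the step there that requires work.
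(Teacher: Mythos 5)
Your lower-bound strategy (showing $\bbP$ is not too small) is essentially the paper's: condition on a carefully chosen configuration at generation $m_n\asymp\sqrt n$ (resp.\ $\asymp n$) of near-minimal size $\rme^{b_0 m_n(1+o(1))}$ whose particles are drifted at rate $\approx\alpha^\pm$ (resp.\ first concentrated, then steered), and appeal to concentration of $\ol{Z}_{n-m_n}$ around its conditional mean. The paper packages the configuration-existence statements as Lemmas~\ref{l:3.2}, \ref{l:11a} and \ref{l:10a}, and the concentration as Lemma~\ref{l:13}; you correctly identify these as the required inputs.

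Your upper bound, however, takes a genuinely different route, and I think the inductive scheme you sketch is both more complicated than necessary and at risk of not closing, for essentially the reasons you yourself flag. The paper does \emph{not} induct over scales. The key structural insight you are missing is that Lemma~\ref{l:8} is a \emph{deterministic} statement: for every $\eps>0$ and $n$ large, \emph{every} $\zeta\in\cZ_n$ (not just with high probability) has at most an $\eps$-fraction of its particles outside $n[-\alpha^--\eps,\alpha^++\eps]$. With this in hand, the upper bound becomes a one-shot stopping-time argument: stop at generation $m_n=b_0^{-1}I(A,p)\sqrt n(1-\eps')$ (resp.\ $m_n=\lfloor\wt J(A,p)(1-\eps)n\rfloor$). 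By Lemma~\ref{l:8} plus Lemma~\ref{l:11}, the conditional mean $\la\ol{\zeta}_{m_n},\nu_{n-m_n}(\sqrt nA-\cdot)\ra$ is strictly below $p$ \emph{uniformly over all} $\zeta_{m_n}\in\cZ_{m_n}$, and then the concentration estimate of Lemma~\ref{l:13} (uniform exponential Chebyshev, applied to the i.i.d.\ subtree fractions) gives $\bbP(\ol{Z}_{n-m_n}(\sqrt nA)\ge p\,|\,Z_0=\zeta_{m_n})\le C\exp(-C'u^2|\zeta_{m_n}|)\le C\exp(-C'u^2\rme^{b_0 m_n})$. Summing over $\zeta_{m_n}$ finishes the bound without any union bound over particles, any bootstrap, or any worry about losses accumulating over $\Theta(\log n)$ scales.

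Framing the ``counting/cost estimate'' probabilistically, as you do (``the fraction of drifted particles is non-negligible only with probability $\le\exp(-c\rme^{b_0 m})$''), is strictly weaker than what is needed and is harder to thread through the argument: on the event where some particles have drifted you would still have to control the conditional behaviour, and union-bounding over $\le\rme^{b_1 m}$ particles only controls $\bbP(B\ne\emptyset)$, not the mass fraction $\sum_{v\in B}|Z^{(v)}_n|/|Z_n|\ge\eps$ that your decomposition actually requires. The deterministic form of Lemma~\ref{l:8} dissolves all of this. Its proof is itself nontrivial (it requires separate arguments for $b_1>b_0$ and $b_1=b_0$, the latter via an Azuma--Hoeffding argument on a uniformly-sampled lineage), so the ``hard work'' you anticipate is real, but it is concentrated there rather than spread across an induction. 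I would call this a genuine gap in your upper-bound plan rather than merely a different route: as written, the inductive bootstrap neither supplies the deterministic positional control that makes the argument close in one step, nor replaces it with a quantitatively tight probabilistic substitute.
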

\noindent
Replacing $A$ by $A^c$ and $p$ by $1-p$ in Theorem~\ref{thm:1}, we immediately get,
\begin{thm}
\label{thm:1'}
Let $A \in \cA \setminus \{\bbR\}$ and $p \in (0, \nu(A))$. If $I(A^{\rmc},(1-p)) < \infty$ then 
\begin{equation}
\lim_{n \to \infty} \frac{1}{\sqrt{n}} \log \Big[-\log 
	\bbP \big(\ol{Z}_n(\sqrt{n}A) \leq p \big)\Big] = I(A^\rmc,1-p) \in (0, \infty) \,,
\end{equation}
whenever $I(A^\rmc, \cdot)$ is continuous at $1-p$. If $I(A^\rmc,1-p) = \infty$ then
\begin{equation}
\lim_{n \to \infty} \frac{1}{n} \log \Big[-\log 
	\bbP \big(\ol{Z}_n(\sqrt{n}A) \leq p \big)\Big] = J(A^\rmc,1-p) \in (0,\infty) \,,
\end{equation}	
whenever $J(A^\rmc, \cdot)$ is continuous at $1-p$.
\end{thm}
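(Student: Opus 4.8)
The plan is to deduce Theorem~\ref{thm:1'} directly from Theorem~\ref{thm:1} by a complementation argument; in fact this is exactly the reduction announced in the sentence preceding the statement. First I would record the elementary measure-theoretic identity underlying the reduction: for each $n \geq 1$ the map $x \mapsto \sqrt{n}\, x$ is a bijection of $\bbR$, hence $\sqrt{n}(A^\rmc) = (\sqrt{n}A)^\rmc$, and since $\ol{Z}_n$ is a probability measure on $\bbR$ this gives
\begin{equation}
\ol{Z}_n(\sqrt{n}A^\rmc) = 1 - \ol{Z}_n(\sqrt{n}A) \,, \qquad n \geq 1 \,.
\end{equation}
Consequently the event $\{\ol{Z}_n(\sqrt{n}A) \leq p\}$ coincides with $\{\ol{Z}_n(\sqrt{n}A^\rmc) \geq 1-p\}$, so that $\bbP(\ol{Z}_n(\sqrt{n}A) \leq p) = \bbP(\ol{Z}_n(\sqrt{n}A^\rmc) \geq 1-p)$ for every $n$.

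Next I would verify that the hypotheses of Theorem~\ref{thm:1} hold for the pair $(A^\rmc, 1-p)$. Since $\cA$ is an algebra it is closed under complementation, so $A \in \cA \setminus \{\bbR\}$ yields $A^\rmc \in \cA \setminus \{\emptyset\}$. As $\nu$ is a probability measure, $\nu(A^\rmc) = 1 - \nu(A)$, and therefore $p \in (0,\nu(A))$ is equivalent to $1-p \in (\nu(A^\rmc), 1)$. Thus Theorem~\ref{thm:1}, applied verbatim with $A$ replaced by $A^\rmc$ and $p$ by $1-p$, gives the two claimed limits for $\bbP(\ol{Z}_n(\sqrt{n}A^\rmc) \geq 1-p)$ under the respective alternatives $I(A^\rmc, 1-p) < \infty$ and $I(A^\rmc, 1-p) = \infty$, together with the corresponding continuity conditions on $I(A^\rmc, \cdot)$ and $J(A^\rmc, \cdot)$ at the point $1-p$.

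Finally, rewriting the probability on the left via the identity above turns these two limits into precisely the two assertions of Theorem~\ref{thm:1'}, with the continuity hypotheses carried over unchanged since they are stated in exactly the same form. There is no genuine obstacle in this argument: the only step warranting any care is the identity $\ol{Z}_n(\sqrt{n}A^\rmc) = 1 - \ol{Z}_n(\sqrt{n}A)$, which relies on the invertibility of the scaling $x \mapsto \sqrt{n}\,x$ and on $\ol{Z}_n$ having total mass one, and the bookkeeping that the excluded cases ($A \neq \bbR$ versus $A^\rmc \neq \emptyset$, and the endpoints of the interval for $p$) match up correctly.
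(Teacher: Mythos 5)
Your proposal is correct and is exactly the reduction the paper itself uses: the authors deduce Theorem~\ref{thm:1'} from Theorem~\ref{thm:1} by replacing $A$ with $A^\rmc$ and $p$ with $1-p$, which is precisely the complementation identity $\ol{Z}_n(\sqrt{n}A^\rmc) = 1 - \ol{Z}_n(\sqrt{n}A)$ together with the bookkeeping you spell out. There is nothing to add beyond what you wrote.
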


\subsubsection{Remarks}  
 The restriction to continuity points of the rate function is commensurate with the usual large deviation formulation applied to sets of the form $[p, \infty)$. Indeed, in an LDP formulation the upper bound on the $\limsup$ for the decay rate of the measure of $[p, \infty)$ is given as the infimum of the rate function over $(p, \infty)$, while the lower bound on the $\liminf$ is the infimum over $[p,\infty)$. Only at continuity points of the rate function is it guaranteed that these infima match and a limit for the rate of decay exists. In~\cite{louidor2015large} (Proposition~2) it was shown that there are indeed choices of sets $A$ and discontinuity points $p$ such that the limits in Theorem~\ref{thm:1} (and Theorem~\ref{thm:1'}) do not exist.

The restriction to intervals of the form $(-\infty ,x]$, $(y, x]$ and
$(y, \infty)$ in $\cA$ is quite arbitrary and the theorem still holds if $\cA$
is the algebra generated by sets of the form $(-\infty, x)$ or more generally,
the set of all finite unions of disjoint finite or infinite intervals which either contain their endpoints or do not, or contain only one of them. On the other hand, Theorem~\ref{thm:1} cannot be expected to hold for all Borel sets, nor even all continuity sets of $\nu$. Indeed, in Proposition~3 of~\cite{louidor2015large}, it is shown that for any $\alpha \in [1/2, 1]$ one can find a Borel set $A \subset \bbR$ and $p \in (0,1)$ such that the probability of $\{\ol{Z}_n(\sqrt{n}A) \geq p\}$ decays doubly exponentially with speed $n^{\alpha}$.

\subsection{Three Examples}
\label{ss:examples}
We now present three examples of offspring distributions, for which we can compute $\alpha^\pm$. The value of the rate functions for any $A$ and $p$ can then be easily deduced via~\eqref{e:1.5},~\eqref{e:1.6} and~\eqref{e:1.11a}. Aside from being the first natural examples to consider, these examples will also help us illustrate some of the key points in the discussion that follows.

\subsubsection{Independent Reproduction and Motion}
\label{sss:1} 
We start with the mixed-sample case, namely when offspring displacements are independent of each other and of the number of born particles. This case was treated in~\cite{louidor2015large}. Formally, we let $Z_1 := \sum_{k=1}^N \delta_{X_k}$, where $N \geq 2$ is an integer-valued random variable, and $X_1, X_2, \dots$ are i.i.d discrete and bounded random variables, which are also independent of $N$. We further suppose that $\bbE (N \log N) < \infty$, $\bbE X_1 = 0$ and $\bbE X_1^2 = 1$. It is easy to see that under these conditions, Assumptions (A1)-(A5) hold with some $b_1 \geq b_0 \geq \log 2$ and $r_0 \in (0,\infty)$.
 
Since $\zeta_1 := \rme^{b_0} \delta_{r^+} \in \cZ_1$, where $r^+ := \text{ess-sup} (X_1)$, we must have $\alpha^+ \geq \la \ol{\zeta}_1 \,,\, x \ra = r^+$. Since the opposite bound is trivial, it follows that $\alpha^+ = r^+$. A symmetric argument gives $\alpha^- = r^-$, where $r^- = - \text{ess-inf} (X_1)$ and Theorem~\ref{thm:1} effectively identifies with Theorem 1 of~\cite{louidor2015large} (where $r^\pm=1$ was assumed).

\subsubsection{Correlated Reproduction and Motion}
\label{sss:2}
Next, we consider the following simple example where the above independence is no longer assumed . Explicitly, let
\begin{equation}
\label{e:1.13}
Z_1 :=  \begin{cases}
	\rme^{b_0} \delta_{-r^-} & \quad \text{w.p. } 1-p \,, \\
	\rme^{b_1} \delta_{r^+} & \quad \text{w.p. } p \,,
\end{cases}
\end{equation}
where  $p \in (0,1)$, $r^\pm > 0$ and $\log 2 \leq b_0 < b_1 < \infty$ satisfy $\rme^{b_0}, \rme^{b_1} \in \bbZ$ and 
\begin{equation}
p \rme^{b_1} r^+ - (1-p) \rme^{b_0} r^- = 0
\quad, \qquad
\frac {p\rme^{b_1} (r^+)^2 + (1-p) \rme^{b_0} (r^-)^2}{p \rme^{b_1} + (1-p) \rme^{b_0}} = 1\,.
\end{equation}
The last two conditions ensure that Assumption (A2) holds.

Here again it is easy to see that $\alpha^\pm = r^\pm$. Clearly $\alpha^\pm \leq r^\pm$. On the other hand, for $\zeta := \rme^{b_1} \delta_{r^+} \in \cZ_1$ and $\zeta' := \rme^{b_0} \delta_{-r^-} \in \cZ_1$, we have $\la \ol{\zeta} \,,\, x \ra = r^+$ and $\la \ol{\zeta}' \,,\, -x \ra = r^-$, which provide the matching lower bounds

\subsubsection{Random Reproduction and Fixed Motion}
\label{sss:3}
The triviality of $\alpha^\pm$ and their immediate derivation in the first two examples, should not be seen as an indication of the usual case. Indeed, as the next example shows, even under a rather simple reproduction law, $\alpha^\pm$ are already nontrivial and finding them explicitly is not a simple matter. For this example, we let the number of offspring be random, but insist that half of the born particles always take a step up, while the rest take a step down. For simplicity we suppose that the magnitude of the steps is $1$ and that the reproduction rate assumes only two values. This formalizes as $Z_1 := N \big(\frac12 \delta_{-1} + \frac12 \delta_{+1}\big)$, where $N$ is a random variable taking the value $\rme^{b_0}$ with probability $p$ and $\rme^{b_1}$ with probability $1-p$, where $p \in (0,1)$ and $\rme^{b_0}, \rme^{b_1} \in 2\bbZ$ satisfy $2 \leq \rme^{b_0} < \rme^{b_1} < \infty$.

The values of $\alpha^\pm$ can be easily deduced from Proposition~\ref{p:1.2} below, which shows that the maximal increase in the average particle height, when the mean reproduction rate is fixed to be $b \in [b_0 \,,\, b_1]$ is given by $\gamma(b)$, where  
\begin{equation}
\label{e:1.16}
\gamma(b) := \begin{cases}
	0 & b = b_0 \,,\\
	\beta^{-1}\big((b_1 - b) \wedge \log 2\big) 
	& b \in (b_0, b_1] 
\end{cases}
\end{equation}
and $\beta:[0,1] \to [0, \log 2]$ is the function
\begin{equation}
\label{e:1.17}
\beta(x) := \tfrac12 \big((1+x) \log (1+x) + (1-x) \log(1-x)\big) \,.
\end{equation}
Observe that $\beta$ is the rate function in the large deviation principle for the sum of i.i.d. symmetric $\pm 1$ random variables, as given by Cram\'{e}r's theorem (see, e.g.~\cite{dembo2010large}, Chapter 2). 
\begin{prop}
\label{p:1.2}
For any $b \in [b_0, b_1]$, $n \geq 1$ and $\zeta \in \cZ_n(b)$ we have
\begin{equation}
\label{e:1.18}
n^{-1} \la \ol{\zeta} \,,\, x \ra \leq \gamma(b) \,.
\end{equation}
On the other hand, for any $\epsilon >0$ and $b \in [b_0, b_1]$, there is $n \geq 1$ and $\zeta \in \cZ_n$ such that
\begin{equation}
\label{e:1.19}
n^{-1} \la \ol{\zeta} \,,\, x \ra \geq \gamma(b) - \epsilon 
\quad, \qquad
n^{-1} \log |\zeta| \in [b-\epsilon, b+\epsilon] \,.
\end{equation}
\end{prop}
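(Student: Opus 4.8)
I would prove the two displayed inequalities separately. Throughout, I use that in this model a particle with $m$ children deterministically places $m/2$ of them at $+1$ and $m/2$ at $-1$, so a realization of $Z_n$ is encoded by a rooted depth-$n$ tree in which every internal vertex has $\rme^{b_0}$ or $\rme^{b_1}$ children; the atom of $Z_n$ attached to a leaf $v$ is $S(v)=n-2D(v)$, where $D(v)$ is the number of down-steps on the root-to-$v$ path, and $\la Z_n,x\ra=\sum_v S(v)$, $|Z_n|=\#\{\text{leaves}\}$.

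\smallskip\noindent\emph{The bound \eqref{e:1.18}.} If $b_1-b\ge\log2$ then $\gamma(b)=1$ and the bound is trivial since $S(v)\le n$. If $b=b_0$ then, since $|Z_k|\ge\rme^{b_0}|Z_{k-1}|$ with equality only if every particle of generation $k-1$ reproduces minimally, the event $\{|Z_n|=\rme^{b_0n}\}$ forces all reproduction to be minimal, so $\cZ_n(b_0)$ is the single symmetric measure obtained from all-minimal reproduction, whence $\la\ol\zeta,x\ra=0=\gamma(b_0)$. In the remaining range $b_0<b$, $b_1-b<\log2$ one has $0<\gamma(b)<1$ and $\beta(\gamma(b))=b_1-b$; here I would set $g_n(b):=\sup\{\la\ol\zeta,x\ra:\zeta\in\cZ_n(b)\}$ and argue in two steps. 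First, a concatenation/superadditivity step: growing from each of the $\rme^{bm}$ leaves of a realization of $\zeta\in\cZ_m(b)$ an independent realization of $\zeta'\in\cZ_n(b)$ has positive probability, yields an element $\zeta''\in\cZ_{m+n}(b)$, and satisfies $\la\ol{\zeta''},x\ra=\la\ol\zeta,x\ra+\la\ol{\zeta'},x\ra$; hence $g_{m+n}(b)\ge g_m(b)+g_n(b)$, and in particular $g_n(b)/n\le\limsup_k g_{kn}(b)/(kn)$. Second, a first-moment/Cram\'er step: for $\del>0$, $h:=(\gamma(b)+\del)m$ and $j:=\lceil(m-h)/2\rceil-1$, a leaf above $h$ has its down-steps in a set of at most $j$ generations, and for each fixed such set at most $(\rme^{b_1}/2)^m$ leaves realize it, so the number $N_{>h}$ of leaves above $h$ obeys $N_{>h}\le\big(\sum_{i\le j}\binom mi\big)(\rme^{b_1}/2)^m\le\rme^{mH(j/m)+o(m)}(\rme^{b_1}/2)^m$ with $H(\lambda):=-\lambda\log\lambda-(1-\lambda)\log(1-\lambda)$. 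Using $j/m\to(1-\gamma(b)-\del)/2$, the identity $H\big((1-x)/2\big)=\log2-\beta(x)$, and $|\zeta|=\rme^{bm}$, this gives $N_{>h}/|\zeta|\le\rme^{m(b_1-b-\beta(\gamma(b)+\del))+o(m)}\to0$, since $\beta$ is strictly increasing on $[0,1]$ and $\beta(\gamma(b))=b_1-b$. As $\la\zeta,x\ra=\sum_v S(v)\le h|\zeta|+mN_{>h}$, we get $\la\ol\zeta,x\ra/m\le\gamma(b)+\del+N_{>h}/|\zeta|$; taking $\sup$ over $\cZ_m(b)$, then $m\to\infty$, then $\del\downarrow0$ yields $\limsup_m g_m(b)/m\le\gamma(b)$. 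Combined with the first step, $g_n(b)/n\le\gamma(b)$ for every $n$, which is \eqref{e:1.18} (vacuous when $\cZ_n(b)=\emptyset$).

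\smallskip\noindent\emph{The bound \eqref{e:1.19}.} For $b\in\{b_0,b_1\}$ one has $\gamma(b)=0$ and the all-minimal (resp.\ all-maximal) symmetric configuration has mean $0\ge\gamma(b)-\eps$ and $n^{-1}\log|Z_n|=b$. For $b\in(b_0,b_1)$ put $\gamma:=\gamma(b)$, fix a large $\ell$ (with $\gamma\ell\in\bbZ$ of the right parity), and build $Z_n$, $n=K\ell$, in $K$ rounds. Maintain a set of ``active'' particles (initially the root); in each round every active particle reproduces maximally for $\ell$ generations, and among its descendants $\ell$ generations on I re-activate exactly $\lfloor\rme^{b\ell}\rfloor$ of those whose within-round displacement is (near-)maximal, of which there are at least that many by the Cram\'er count $\rme^{(b_1-\beta(\gamma))\ell+o(\ell)}\ge\rme^{b\ell}$; every other descendant, and all its progeny, reproduces minimally thereafter. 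After $K$ rounds there are $\rme^{bn+o(n)}$ active leaves, each at height $\ge(\gamma-o_\ell(1))n$. The non-active leaves split by the round $j$ of de-activation: there are $\rme^{[(j-1)b+b_1+b_0(K-j)]\ell+o(\ell)}$ of them, and since such a particle was active through round $j-1$ and its progeny diffuses symmetrically afterwards, their mean height is $(j-1)\gamma\ell-O(\ell)$. As $b>b_0$ these counts increase geometrically in $j$, so for fixed $\ell$ the total number of leaves is $\rme^{bn}O_\ell(1)$, the dominant non-active family being de-activated in the last round at height $\gamma n-O_\ell(1)$; hence $\la\ol{Z_n},x\ra\ge\gamma n-O_\ell(1)-o_\ell(1)n$. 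Choosing first $\ell$ and then $n=K\ell$ large gives $n^{-1}\la\ol{Z_n},x\ra\ge\gamma-\eps$ and $n^{-1}\log|Z_n|\in[b-\eps,b+\eps]$, i.e.\ \eqref{e:1.19}.

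\smallskip\noindent\emph{Main obstacle.} I expect the lower bound to be the delicate part: de-activated particles cannot be deleted, only slowed, and the family de-activated in the last round in fact \emph{outnumbers} the active family (by the constant factor $\rme^{(b_1-b)\ell}$). The construction survives only because those particles are themselves still at height $\gamma n-O(1)$ --- active until boundedly many rounds before the end and symmetrically diffusing afterwards --- so making this bookkeeping precise, together with the parity/divisibility conditions needed for exact realizability and the $o(\ell)$-scale loss from keeping only near-maximal within-round displacements (one actually keeps displacement $\ge\gamma\ell-O(\log\ell)$, which costs an $o_\ell(1)n$ term), is where the real work lies. On the upper-bound side the only genuine subtlety is promoting the asymptotic first-moment estimate to the stated inequality for \emph{every} $n$, which the superadditivity step handles.
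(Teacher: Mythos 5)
Your upper bound is essentially the paper's argument: a first-moment Cram\'er count of the leaves above a slope-$(\gamma(b)+\delta)$ barrier at a single late time, combined with concatenation of $\zeta$ with itself to promote the asymptotic estimate to a bound valid for every $n$ (the paper phrases this as applying the asymptotic bound directly to $\zeta^{*k}$ rather than via superadditivity of $g_n(b)$, but these are the same step). Your lower bound, however, is a genuinely different construction. The paper uses a \emph{fixed linear barrier}: a particle in generation $k$ reproduces at rate $\rme^{b_1}$ if and only if its height is $\ge k\beta^{-1}(b_1-b)$, and otherwise at $\rme^{b_0}$; the three estimates \eqref{e:1.23}--\eqref{e:1.25} then control $|\zeta_n|$ and the lower tail directly. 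This forces a case split: when $b_1-b>\log 2$ the barrier becomes $x=k$ and reproducing at $\rme^{b_1}$ overshoots, so the paper replaces $\rme^{b_1}$ by a block-averaged rate $\approx\rme^{b+\log 2}$ realized by mixing $\rme^{b_0}$- and $\rme^{b_1}$-reproductions among $M$ particles, and then re-runs the barrier argument. Your scheme replaces the barrier by a \emph{round-based selection}: in each block of $\ell$ generations active particles reproduce at $\rme^{b_1}$, and exactly $\lfloor\rme^{b\ell}\rfloor$ of the descendants with near-maximal within-round displacement are re-activated, everything else reproducing minimally thereafter. This handles both regimes of $b_1-b$ uniformly (the number of near-vertical paths over a block is always at least $\rme^{b\ell}$ because $\beta(\gamma(b))\le b_1-b$), at the price of the de-activated-family bookkeeping you describe: one must use that their counts grow geometrically in the round index (with ratio $\rme^{(b-b_0)\ell}$), that the within-round deficit of the losers has mean $-O(\rme^{(b-b_1)\ell}\ell)=o_\ell(1)\ell$ because the block is displacement-symmetric, and that post-deactivation diffusion is centered. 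I have checked that this bookkeeping closes, so the construction is sound; its advantage over the paper's is uniformity in $b$ (no $\log 2$ case split), its disadvantage is that it requires the $O(\log\ell/\ell)$ selection slack and the geometric-tail accounting, whereas the paper's barrier gives the exact $\gamma(b)$ rate in one go. Two small points: in your ``remaining range'' $b_0<b$, $b_1-b<\log2$ you write $0<\gamma(b)<1$, but $b=b_1$ falls into this range with $\gamma(b_1)=0$; your Cram\'er step still works there since it only needs $\gamma(b)<1$, so this is a misstatement rather than a gap. And when reporting the de-activated means, the correct lower bound is $(j-1)(\gamma\ell-O(\log\ell))-o_\ell(1)\ell$; absorbing the $O((j-1)\log\ell)$ into ``$-O(\ell)$'' is only legitimate after dividing by $n$ and sending $\ell\to\infty$, which is how you in fact use it, so this too is cosmetic.
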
 

As Figure~\ref{f:1} demonstrates, the behavior of $\gamma$ is qualitatively different, depending on whether $b_1 \geq b_0 + \log 2$ or not. In the former case, $\gamma(b) \equiv 1$ for all $b \in (b_0, b_1 - \log 2]$, while in the latter case $\gamma(b) \leq \beta^{-1} (b_1 - b_0) < 1$ for all $b \in [b_0, b_1]$. In both cases $\gamma$ has a jump discontinuity at $b = b_0$, but otherwise continuous on $(b_0, b_1]$. 
\begin{figure}[ht!]
	\centering
    \includegraphics[width=1\textwidth]{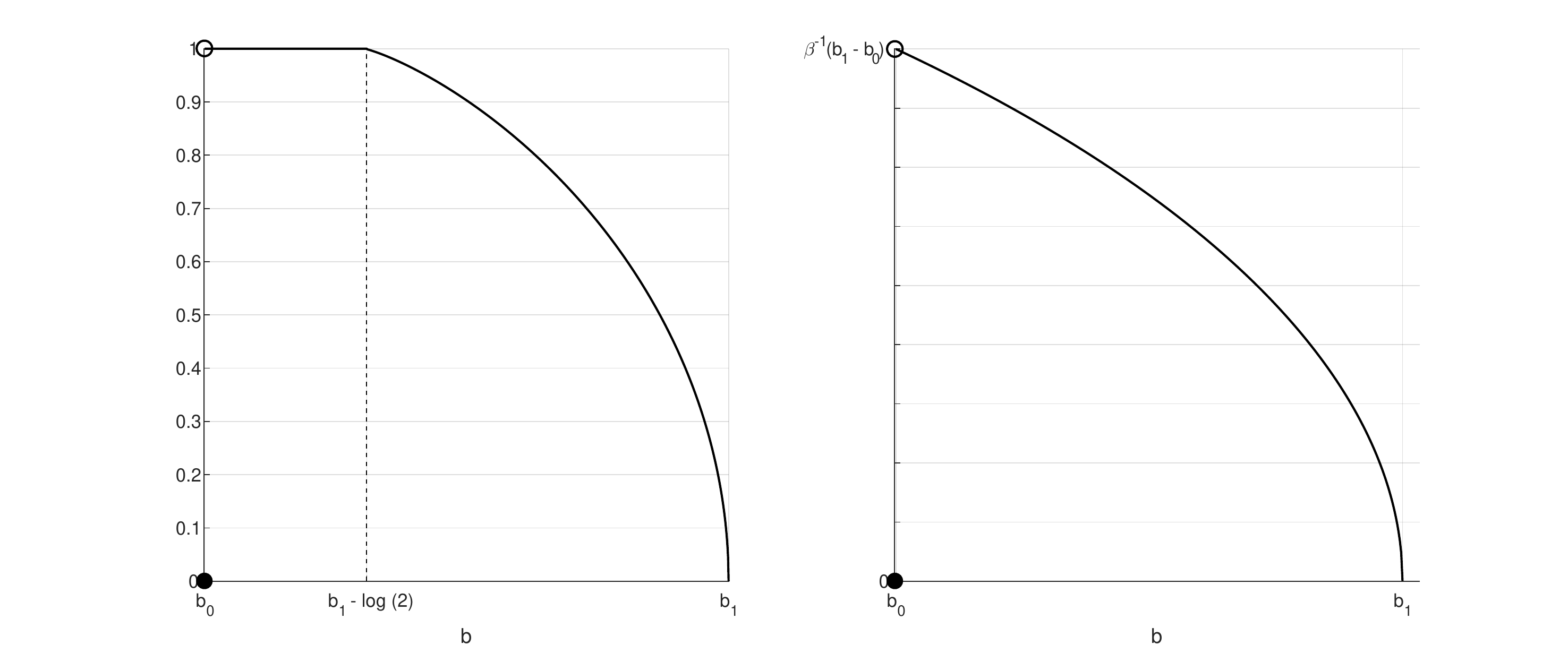} 
    \caption{The graph of $b \mapsto \gamma(b)$ in the case $b_1 \geq b_0 + \log 2$ (left) and $b_1 < b_0 +\log 2$ (right). In both cases $\gamma$ is discontinuous at $b_0$ and continuous at $b_1$.} 
    \label{f:1}
\end{figure}

It then follows immediately from the proposition and symmetry of the reproduction law that 
\begin{equation}
\alpha^\pm = \sup_{b \in [b_0, b_1]} \gamma(b) = \gamma(b_0+) = 
	\begin{cases}
		\beta^{-1}(b_1 - b_0) & b_1 < b_0 + \log 2  \,, \\
		1 & b_1 \geq b_0 + \log 2 \,.
	\end{cases}
\end{equation}

\subsection{Discussion}

\subsubsection{Optimal Strategy for Increase in the Empirical Fraction}
\label{sss:1.3.1}
As mentioned, this paper can be seen as a generalization of the work by Louidor and Perkins~\cite{louidor2015large}, in which Theorems~\ref{thm:1} and~\ref{thm:1'} were proved only for the case of Example~\ref{sss:1}, namely when motion and branching are independent, and moreover when the steps are symmetric $\pm 1$. For this, henceforth ``simple case'', the authors identified two ``strategies'' for realizing the event $\{\ol{Z}_n(\sqrt{n}A) \geq p\}$ for $p \in (\nu(A), 1)$ and showed that the ``probabilistic cost'' of these is the minimum possible.

The first is a {\em shift strategy}, whereby all particles move to $x\sqrt{n}$ in $|x|\sqrt{n}$ generations, by taking either the $+1$ or the $-1$ step consistently and reproducing at the minimal possible rate of $\rme^{b_0}$. After $|x|\sqrt{n}$ generations, from the point of view of each particle, the test-set $\sqrt{n}A$ is shifted by 
$-x\sqrt{n}$ and therefore, in light of~\eqref{e:1.3}, the fraction of its descendants in $\sqrt{n}A$ at time $n$, will concentrate around $\nu(A - x)$.

Consequently, if it is possible to find $x \in \bbR$ such that $\nu(A - x) \geq p$, then employing the above shift strategy with $x$ for which $|x|$ is minimal, and then letting all particles evolve normally for the remaining $n-|x| \sqrt{n}$ generation, realizes the event $\{\ol{Z}_n(\sqrt{n}A) \geq p\}$. The ``probabilistic cost'' of such a shift-strategy is the cost of controlling all particles for the first $|x|\sqrt{n}$ generations, and since reproduction is kept at its minimum, the number of such particles is of order $\rme^{b_0 |x|\sqrt{n}}$. This gives a lower bound of order $\exp \big(\!-\rme^{b_0 |x| \sqrt{n}}\big)$, on a double exponential scale, and shows that such probabilities decay at most doubly exponentially with speed $\sqrt{n}$ and rate
\begin{equation}
\label{e:1.M17}
b_0 \min\{\wt{I}^+(A,p) \,,\, \wt{I}^+(A,p)\} \sqrt{n} \,,
\end{equation}
where $\wt{I}^\pm(A,p)$ are as in~\eqref{e:1.5}.

If there is no $x \in \bbR$ such that $\nu(A - x) \geq p$, then shifting all particles is not enough. In this case, one employs a {\em dilation strategy}, whereby all particles move together 
for $un$ generations, taking a $+1$ step half of the time and $-1$ otherwise, again reproducing at the minimal rate of $\rme^{b_0}$. After this time, there are $n(1-u)$ generations left, before the number of particles in $\sqrt{n}A$ is tested. Thanks to the CLT scaling in~\eqref{e:1.3}, from the point of view of each particle, the test-set is now dilated by $1/\sqrt{1-u}$. Employing the shift strategy from before on all particles for additional $x\sqrt{n}$ generations, shifts the set further by $x$, so that the overall fraction of descendants in $\sqrt{n}A$ at time $n$ now concentrates around $\nu(-x + A/\sqrt{1-u})$. 

As before, if $u$ and $x$ are such that $\nu(-x + A/\sqrt{1-u}) \geq p$, then the above will realize the event $\{\ol{Z}_n(\sqrt{n}A) \geq p\}$ at the cost of controlling an order of $\rme^{b_0 (un + |x|\sqrt{n})}$ particles. This gives a lower bound of $\exp \big(\!-\rme^{b_0 (u n + |x|\sqrt{n})}\big)$, on a double exponential scale. Taking the minimal possible $u > 0$, shows that such probabilities decay at most doubly exponentially with speed $n$ and rate $b_0 \wt{J}(A,p)$, where $\wt{J}(A,p)$ is as in~\eqref{e:1.6}.

Once the law of a general random point measure is allowed as the offspring distribution, it is no longer clear that the optimal way to realize the event $\{\ol{Z}_n(\sqrt{n}A) \geq p\}$ is via the shift and dilation strategies, as in the simple case above. Moreover, even if it is, it is not clear how to implement such shift and dilation strategies. For one, one cannot necessarily ``force'' all born particles to move to the same position, since such a configuration might not be in the support of $Z_1$. Even if there is such a configuration, it need not be one with a minimal number of particles, nor with the position taken by all particles maximal in magnitude.

Take Example~\ref{sss:2}, for instance. In order to implement a shift in the positive direction, one could naively use configuration $\rme^{b_1} \delta_{r^+}$ repeatedly. This, however, results in the population reproducing at the highest possible rate, and hence might (and will) not be the optimal way for a shift. Shifting in the case of Example~\ref{sss:3} seems even more difficult. Here, all particle configurations in the support of $Z_1$ are symmetric around $0$ and hence the average height of all born particles is the same as height of their parent.

Nevertheless, this work shows that the ``optimal'' way for realizing the event $\{\ol{Z}_n(\sqrt{n}A) \geq p\}$ in the general case, is still by shifting or dilating the test-set $A$, exactly as in the simple case. In particular, the probability of this event still decays doubly exponentially and with speed which is either $\sqrt{n}$ or $n$, depending on whether $\min\{\wt{I}^+(A,p), \wt{I}^-(A,p)\} < \infty$ or not. However, unlike in the simple case, the rate of decay is now also determined by the cost of implementing such shift and dilation strategies, which is not as straightforward as before.

The new ingredient which captures this ``implementation cost'' comes in the form of $\alpha^\pm$, as defined in~\eqref{e:7}. These quantities represent the {\em maximal} or, in light of Proposition~\ref{p:1.2b}, the {\em asymptotic rate of increase/decrease in the average particle height}. We show that such rate of height increase/decrease can be achieved asymptotically by an arbitrarily high fraction of the particles and at an arbitrarily small mean reproduction rate. Explicitly, we show that for any $\epsilon > 0$ and $n \geq 1$, there are configurations with $n$ generations, in which $1-\epsilon$ of the particles increase/decrease their heights at rate $\alpha^\pm(1-\epsilon)$, while reproducing on average at rate $\rme^{b_0 + \epsilon}$.

For the shift case, we can then use such configurations to shift $1-\epsilon$ fraction of the particles to $x \sqrt{n}$, in $|x|\sqrt{n}/(\alpha^{\text{\sgn}(x)}(1-\epsilon))$ many generations, while keeping the reproduction rate at $\rme^{b_0 + \epsilon}$, with $\epsilon > 0$ arbitrary small. Proceeding as in the simple case, we then get a lower bound of a double exponential order with speed $\sqrt{n}$ and rate $I(A,p)$ as in~\eqref{e:1.11a}. Similarly, for the dilation case, we can use such configurations to shift $1-\epsilon/2$ fraction of the particles up at rate $\alpha^+(1-\epsilon)$ for $un \times \alpha^-/(\alpha^++\alpha^-)$ 
generations and then $1-\epsilon/2$ fraction of the particles down at rate $\alpha^-(1-\epsilon)$ for 
$un \times \alpha^+/(\alpha^++\alpha^-)$ generations. This brings $1-\epsilon$ fraction of the particles to about $0$ in $un$ generations while keeping reproduction rate at $\rme^{b_0 + \epsilon}$. Proceeding essentially as in the simple case when $\min\{\wt{I}^+(A,p) \,,\, \wt{I}^-(A,p)\} = \infty$, we get a lower bound of a double exponential order, with speed $n$ and  rate $J(A,p)$ as in~\eqref{e:1.11a}. 

To obtain matching upper bounds, namely to show that these shift and dilation strategies are ``optimal'', the idea is to stop the process just before the needed strategy can be completed. That is after $(1-\delta) \wt{J}(A,p)n$ steps, if $I(A,p) = \infty$ and after $(1-\delta) \min\{\wt{I}^+(A,p)/\alpha^+\,,\,\wt{I}^-(A,p)/\alpha^-\} \sqrt{n}$ steps, if $I(A,p) < \infty$. In the first case, from the point of view of any particle, when the process is stopped the test-set is dilated by $1/\sqrt{1-u}$ for $u < \wt{J}(A,p)$.
This implies by the definition of $\wt{J}(A,p)$ in~\eqref{e:1.6}, that regardless of the particle's height, the mean fraction of its children which will lie in the test-set at time $n$ will be smaller than $p$. Since all particles evolve independently from this time on, standard large deviation inequalities (uniform exponential Chebyshev) can be used to show that the probability that the overall fraction of children is at least $p$, decays exponentially in the number of particles at the stopping time. As there are at least $\rme^{b_0 \wt{J}(A,p) (1-\delta)n}$ such particles, we obtain an upper bound on a double exponential scale of $\exp \big(\!-\!\rme^{J(A,p) (1-\delta)n}\big)$ for any $\delta >0$, as desired.

If $I(A,p) < \infty$, we wish to argue in the same way and claim that for almost all particles the mean fraction of children in the test-set is smaller than $p$, once the process is stopped as above. This will follow from the definition of $\wt{I}^\pm(A,p)$ in~\eqref{e:1.5}, if we can show that an asymptotic rate of height increase/decrease larger than $\alpha^\pm$ cannot be achieved, even if only $\epsilon$ fraction of the particles are considered, with $\epsilon>0$ arbitrarily small. We show that this statement indeed holds and, proceeding as in the case $I(A,p) = \infty$, this shows that $\exp \big(\!-\!\rme^{I(A,p) (1-\delta) \sqrt{n}}\big)$ is an upper bound on a double exponential scale, for any $\delta >0$.

\subsubsection{Maximal Rate of Increase/Decrease in Average Particle Height}
In light of the discussion in the previous subsection, it is not surprising that a substantial part of this work, and its main novelty, is the study of the quantities $\alpha^\pm$. The fact that $n \mapsto \alpha^\pm_n/n$ is increasing, is a standard consequence of the super-additivity property of the sequence $\alpha^\pm_n$. This, in turn, follows rather obviously from definition~\eqref{e:7} and the fact that the average particle height adds up under convolution: $\la \overline{\zeta * \zeta'}\,,\, x \ra = \la \ol{\zeta} \,,\, x \ra + \la \ol{\zeta}' \,,\, x \ra$. 

Thanks to the central limit theorem, not only does this show that the mean particle height increases linearly under repeated convolution of a particle configuration with itself, but also that the resulting empirical distribution is concentrated, in a Gaussian manner, around this mean. This readily implies that the maximal rate of increase/decrease in the average particle height is also (asymptotically) a lower bound on the maximal rate for increase/decrease in the height of $1-\epsilon$ fraction of the population for any $\epsilon > 0$. As mentioned, we prove that also the converse is true, namely that one cannot achieve a higher (asymptotic) rate of increase/decrease in height by considering only the top $\epsilon$ fractile of the population for any $\epsilon > 0$.

What is somehow less apparent at first sight, is that regardless of the reproduction law, one can achieve the optimal rate of height increase/decrease, using asymptotically the minimal possible average reproduction rate, $\rme^{b_0}$. Consider for instance example~\ref{sss:2}. It is clear that the highest rate of increase in height is $+r$ and that, one can easily have all particles increase their height at this rate, by repeatedly reproducing according to $\delta_{r+}$. It turns out, however, that one can still obtain the same asymptotic increase rate of $+r$ with an asymptotic mean reproduction rate of $\rme^{b_0}$.

As Example~\ref{sss:3} and the statement and proof of Proposition~\ref{p:1.2} demonstrate, it may be non-trivial to find the value of $\alpha^\pm$. There is no reason to believe that there should be a more explicit definition of $\alpha^\pm$ aside from via the limiting procedure in Proposition~\ref{p:1.2b} or in terms of the variational problem in~\eqref{e:7}. For example, one can devise reproduction laws in which obtaining a maximal average height requires both a composition of any number of configurations in $\cZ_1$ and a limiting procedure such as that in the proof of Proposition~\ref{p:1.2}. 

\subsubsection{Minimality of Assumptions}
\label{sss:Assumptions}
Let us now discuss the assumptions imposed on the process $Z$ and in particular argue that they are ``minimal'', namely that they are either necessary for Theorem~\ref{thm:1} (and Theorem~\ref{thm:1'}) to hold, or that if they are not satisfied, one can instead consider a modified process $Z'$, for which they are satisfied, and then easily translate the result back to $Z$. 

Assumption (A1), the so-called Kesten-Stigum condition, is necessary for both convergences in~\eqref{e:1} to hold (see~\cite{biggins1990central}). Without this condition~\eqref{e:1.3} need not hold in the general case and hence also the result of Theorem~\ref{thm:1}. Assumption (A2) is just centering and normalization of the average height after one generation, and by considering $Z_1 (\sigma \cdot + \mu)$, where $\mu = \bbE \la Z_1 \,,\, x \ra / \bbE |Z_1|$ and $\sigma^2 = \bbE \la Z_1 \,,\, (x - \mu)^2 \ra / \bbE|Z_1|$, any process $Z$ for which $\bbE \la  Z_1\,,\, x^2 + |x| \ra < \infty$ can be made to satisfy this assumption, with the result of the theorem translated accordingly. The latter moment condition for the intensity measure of $Z_1$ is indeed necessary for the second convergence in~\eqref{e:1} to hold (see~\cite{biggins1990central}), but in our case it is automatically satisfied because of Assumptions (A1) and (A4).

Assumption (A3) requires that the process $Z$ has at least binary branchings. Without this condition, it might be possible to implement a shift or dilation strategy, while having the number of particles grow only linearly. This will result in an exponential (and not double exponential) regime of decay for the probabilities in the theorem. Similarly, if the support of $Z_1$ is not bounded, as required by Assumption (A4), then one might be able to increase $\ol{Z}_n(\sqrt{n}A)$ by having particles reach $\sqrt{n} A$ in $o(\sqrt{n})$ generations (even one generation). This again, will result in a completely different regime of decay. The fact that the support of $Z_1$ lies in some deterministic discrete set almost surely, is made just for convenience and the proofs carry through without this assumption, albeit with some extra epsilons and deltas.

Lastly, we turn to Assumption (A5) which effectively requires that either the size of the population in the first generation, or its average height is random. Theorem~\ref{thm:1} is already non-trivial even if one of these is a constant. This is evident from Example~\ref{sss:1} with, e.g., $N \equiv 2$ (constant size) and Example~\ref{sss:3} (constant average height). To see the necessity of this requirement, observe that if both the size and average height are constant in the first generation, then in light of (A2), the average height will be $0$ in all generations and consequently $\alpha^\pm = 0$. Moreover, it takes a standard application of the Azuma-Hoeffding inequality (or martingale central limit theorem) to show that in this case:
\begin{prop}
\label{p:1.4}
Let $Z = (Z_n)_{n \geq 0}$ be the branching random walk process defined as in Subsection~\ref{ss:1} and satisfying Assumptions (A1)-(A4) but not (A5). Then for all $x > 0$ and $n \geq 1$,
\begin{equation}
\bbP \Big( \ol{Z}_n([-\sqrt{n}x \,,\, \sqrt{n}x]^\rmc\big) > 2\rme^{-x^2/(2 r_0)} \Big) = 0 \,.
\end{equation}
\end{prop}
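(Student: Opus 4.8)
The statement is a \emph{pathwise} Gaussian concentration bound for $\ol Z_n$, and I would prove it by a spine / many-to-one martingale argument, as the remark preceding it suggests. The first step is to unpack the negation of (A5): since then $\Var(|Z_1|)=\Var(\la Z_1,x\ra)=0$ and $\bbE\la Z_1,x\ra=0$ by (A2), we get $|Z_1|\equiv m$ and $\la Z_1,x\ra\equiv 0$ almost surely, where $m:=\rme^{b_0}$ is a positive integer $\ge 2$. Hence, almost surely, every offspring configuration occurring in the genealogy of $Z$ has exactly $m$ atoms (counted with multiplicity), each lying in $[-r_0,r_0]$ by (A4), and with barycentre $0$. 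On this almost sure event $|Z_n|=m^n$, so $\ol Z_n$ assigns mass $m^{-n}$ to each of the $m^n$ particles of generation $n$, and it suffices to prove that on this event $\ol Z_n\big([-\sqrt n x,\sqrt n x]^\rmc\big)$ is bounded by a deterministic Gaussian tail in $x$, independent of $n$, for every $n\ge 1$ and $x>0$.

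To this end, fix $n$ and condition on $\cF_n$, the $\sigma$-algebra generated by $(Z_0,\dots,Z_n)$ together with the full genealogical labelling and all displacements up to generation $n$. Given $\cF_n$, pick a particle $V$ of generation $n$ uniformly at random, realised by selecting, for each $k=1,\dots,n$, one of the $m$ children of the current ancestor uniformly and independently; set $\cG_k:=\sigma\big(\cF_n,\text{the choices up to generation }k\big)$ and let $S_k$ be the position of the generation-$k$ ancestor of $V$. Since the $m^n$ leaves are equally likely, $\ol Z_n(B)=\bbP(S_n\in B\mid\cF_n)$ for every Borel $B$. The increment $S_k-S_{k-1}$ is the displacement, relative to its parent, of the child selected at step $k$; conditionally on $\cG_{k-1}$ it is a uniform choice among the $m$ atoms of a fixed configuration of barycentre $0$, so $\bbE[S_k-S_{k-1}\mid\cG_{k-1}]=0$, whence $(S_k,\cG_k)_{k=0}^n$ is a martingale, with $|S_k-S_{k-1}|\le r_0$ surely since all atoms lie in $[-r_0,r_0]$.

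Finally, apply the Azuma--Hoeffding inequality to this bounded-increment martingale: on the almost sure event above, and for every $n$ and $x$,
\begin{equation*}
\ol Z_n\big([-\sqrt n x,\sqrt n x]^\rmc\big)=\bbP\big(|S_n|>\sqrt n x \,\big|\, \cF_n\big)\le 2\exp\!\Big(-\tfrac{(\sqrt n x)^2}{2 n r_0^2}\Big)=2\,\rme^{-x^2/(2 r_0^2)}\,,
\end{equation*}
a Gaussian tail that is uniform in $n$ and is, up to the precise value of the constant in the exponent traced from the displacement bound $r_0$, the bound asserted in the statement. Since the left-hand side is thus surely bounded by a fixed function of $x$ alone, the event in the statement has probability $0$.

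The only point demanding care is that the martingale property must hold \emph{conditionally} on $\cG_{k-1}$, not merely in expectation; this is exactly where the failure of (A5), via $\la Z_1,x\ra\equiv 0$, is used, for otherwise the conditional mean increment would be the random barycentre of the offspring configuration and $(S_k)$ would not be a martingale. I anticipate no real obstacle beyond this; as the excerpt notes, one could instead invoke the martingale central limit theorem, but Azuma--Hoeffding has the advantage of producing an explicit non-asymptotic bound valid for all $n$ simultaneously.
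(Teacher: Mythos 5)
Your proof is correct and is essentially the same argument as the paper's: both construct a uniformly chosen ancestral line (spine) $S_0,\dots,S_n$ through a realization of the process, observe that the failure of (A5) together with (A2) forces $\la Z_1,x\ra\equiv 0$ and hence makes $(S_k)$ a martingale with increments bounded by $r_0$, and conclude by Azuma--Hoeffding that $\ol Z_n([-\sqrt n x,\sqrt n x]^\rmc)\le 2\rme^{-x^2/(2r_0^2)}$ deterministically. (Note that you, like the paper's own proof, obtain $r_0^2$ in the denominator of the exponent, whereas the statement as printed has $r_0$; this is evidently a typo in the proposition's display.)
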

In particular, for $A \in \cA$ sufficiently far from $0$ and $p$ sufficiently large, the probability of $\{\ol{Z}_n(\sqrt{n}A) \geq p\}$ will be $0$ for all $n \geq 1$.

\subsubsection{Organization of paper}
The remainder of the paper is organized as follows. Section~\ref{s:2}, is devoted to studying $\alpha^\pm_n$ and $\alpha^\pm$. Results in this sections are used crucially in the proof of the main theorem. This section also includes the proof of Proposition~\ref{p:1.2b} and Proposition~\ref{p:1.4}. Section~\ref{s:3} contains some technical statements, most-of-which were already stated and proved in~\cite{louidor2015large}. These include properties of the functions $\wt{I}(A,p)$ and $\wt{J}(A,p)$, results concerning the rate of convergence in the central limit theorem and uniform large deviation estimates. For the sake of self-containment and since they are rather short, proofs are provided. In Section~\ref{s:4}, the results of the previous two sections are used to prove the main theorem. Section~\ref{s:5} includes the proof of Proposition~\ref{p:1.2}. As usual, the letter $C$ and its decorated versions (e.g. $C'$) denote positive constants whose value may change from one line to another.

\section{Maximal Rate of Increase/Decrease in the Average Height}
\label{s:2}
In this section we study the maximal rate of increase or decrease in the average particle height, namely $\alpha^\pm$. Results from this section are then used in the proof of Theorem~\ref{thm:1}.

\subsection{Equivalence of Maximum and Limit}
We first show that the sequence $(\alpha^\pm_n/n :\: n \geq 1)$ is monotone increasing and hence that the maximization~\eqref{e:7} can be replaced by taking a limit. This is a straight-forward consequence of super-additivity of the sequence $\alpha^\pm_n$.

\begin{proof}[Proof of Proposition~\ref{p:1.2b}]
We shall prove the statement for $\alpha^+_n$ as the argument for $\alpha^-_n$ is similar, and begin by showing monotonicity. This is a straightforward consequence of super-additivity of the sequence $\alpha^+_n$. Indeed let $n_1, n_2 \geq 1$ and $\epsilon > 0$. By definition for $i=1,2$, we may find $\zeta^i \in \cZ_{n_i}$ such that $\la \ol{\zeta}^i \,,\, x \ra \geq \alpha^+_{n_i} - \epsilon$. Setting $\zeta := \zeta^1 * \zeta^2$, we clearly have $\zeta \in \cZ_{n_1+n_2}$ and $\la \ol{\zeta} \,,\, x \ra =  \la \ol{\zeta}^1 \,,\, x \ra + \la \ol{\zeta}^2 \,,\, x \ra \geq \alpha^+_{n_1} + \alpha^+_{n_2} - 2 \epsilon$. Since $\epsilon > 0$ is arbitrary, this shows that $\alpha^+_{n_1+ n_2} \geq \alpha^+_{n_1} + \alpha^+_{n_2}$. It is then a consequence of Fekete's lemma~\cite{fekete1923verteilung}, that $\alpha^+_n/n$ is monotone non-decreasing and hence converging to $\alpha^+$.

Thanks to Assumption (A4) we clearly must have $\alpha^+ \in [-r_0, r_0]$. To show that $\alpha^+$ is positive, suppose first that $\bbP(\la Z_1, x \ra = 0) < 1$, then by Assumption (A2), there must exist $\zeta_1 \in \cZ_1$ such that $\la \ol{\zeta}_1, x \ra > 0$, which shows that $\alpha^+ > 0$. Otherwise, by Assumption (A5), we must have $b_0 < b_1$. Moreover by (A2) again, there must exist $\zeta_1 \in \cZ_1$, such that $\la \zeta_1, x^2 \ra \neq 0$. It follows that there must exist $z > 0$ such that $z \in \supp(\zeta_1)$. Now choose also $\zeta \in \cZ_1(b_0)$ and $\zeta' \in \cZ_1(b')$ for some $b' \in (b_0, b_1] \cap \bbR$ and construct $\zeta_2$ from $\zeta_1$, by having a particle whose height in $\zeta_1$ is $z$ evolve one generation according to $\zeta'$, while all other particles in $\zeta_1$ evolve one generation according to $\zeta$. Then, $\la \zeta_2 \,,\, x \ra = \rme^{b_0} \la \zeta_1\,,\, x \ra + \big(\rme^{b'} - \rme^{b_0}\big) z = \big(\rme^{b'} - \rme^{b_0}\big) z > 0$, which shows that $\alpha^+ > 0$ since $\zeta_2 \in \cZ_2$.
\end{proof}

\subsection{Equivalence of Average and Fractiles}
In this subsection we show that the optimal asymptotic rate of increase/decrease in the average particle height is equal to the optimal asymptotic rate of increase/decrease in the height of a $t$ fraction of the particles for any $t \in (0,1)$. Moreover, there are particle configurations, which achieve such asymptotic rate of increase/decrease with $\rme^{b_0+\epsilon}$ average reproduction rate, for arbitrarily small $\epsilon >0$.  

We begin by a lemma which shows that the optimal asymptotic rate of increase/decrease in the average particle height cannot be beaten by considering only the height of an $\epsilon$ fraction of the heights, for any $\epsilon > 0$
\begin{lem}
\label{l:8}
For all $\epsilon > 0$, there exists $n_0$ large enough, such that for all $n > n_0$ and $\zeta \in \cZ_n$
\begin{equation}
\label{e:58}
\ol{\zeta} \Big(n\big[-\alpha^-\! - \epsilon \,,\, \alpha^++\epsilon \big]^\rmc\Big) \leq \epsilon
\quad. 
\end{equation}
\end{lem}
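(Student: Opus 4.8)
The plan is to deduce the two–sided statement from a one–sided one and then to prove, in fact, the stronger fact that each tail decays exponentially in $n$, via an exponential Markov inequality whose input is a uniform control of the exponential moments of $\ol\zeta$ over $\zeta\in\cZ_n$.

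Since $n[-\alpha^-\!-\epsilon,\,\alpha^+\!+\epsilon]^\rmc=(-\infty,-n(\alpha^-\!+\epsilon))\cup(n(\alpha^+\!+\epsilon),\infty)$, it suffices (after halving $\epsilon$) to bound each of the two rays separately, and the lower–tail bound is obtained by the same argument with $x$ replaced by $-x$ and $\alpha^+$ by $\alpha^-$ (invoking the $\alpha^-$–part of Proposition~\ref{p:1.2b}). So I will only explain why $\theta_n:=\sup_{\zeta\in\cZ_n}\ol\zeta\big([n(\alpha^+\!+\epsilon),\infty)\big)\le \rme^{-c n}$ for some $c=c(\epsilon)>0$ and all large $n$, which is more than enough. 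The starting point is that, by super‑additivity of $(\alpha^+_n)$ and Proposition~\ref{p:1.2b}, every $\zeta\in\cZ_n$ satisfies $\la\ol\zeta,x\ra\le\alpha^+_n\le\alpha^+ n$. This alone is not enough, though: a mass $\rho_n$ of particles sitting near $nr_0$ is compatible with the mean bound as soon as the bulk is pulled below $0$, so one must use that the branching mechanism forbids $\rho_n$ from failing to be exponentially small.

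For the exponential moments, fix $\lambda>0$. By Chebyshev, $\ol\zeta([n(\alpha^+\!+\epsilon),\infty))\le\rme^{-\lambda n(\alpha^+\!+\epsilon)}\la\ol\zeta,\rme^{\lambda x}\ra$, so it is enough to show that for all small $\lambda>0$
\[
\tfrac1n\log\big(\textstyle\sup_{\zeta\in\cZ_n}\la\ol\zeta,\rme^{\lambda x}\ra\big)\le\lambda\alpha^++\psi(\lambda),\qquad n\ge1,
\]
with $\psi(\lambda)/\lambda\to0$ as $\lambda\downarrow0$; taking $\lambda$ so small that $\psi(\lambda)<\lambda\epsilon$ then gives $\theta_n\le\rme^{-n(\lambda\epsilon-\psi(\lambda))}$. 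Because $\la\overline{\zeta*\zeta'},\rme^{\lambda x}\ra=\la\ol\zeta,\rme^{\lambda x}\ra\,\la\ol{\zeta'},\rme^{\lambda x}\ra$ and $\zeta*\zeta'\in\cZ_{n+m}$ for $\zeta\in\cZ_n,\zeta'\in\cZ_m$, the sequence $\Phi_n(\lambda):=\sup_{\zeta\in\cZ_n}\log\la\ol\zeta,\rme^{\lambda x}\ra$ is super‑additive, so $\Phi(\lambda):=\lim_n\Phi_n(\lambda)/n$ exists; it is convex in $\lambda$ with $\Phi(0)=0$, and $\Phi(\lambda)\ge\lambda\alpha^+$ by Jensen applied to near–optimizers of $\alpha^+_n$. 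The displayed bound is exactly the assertion $\Phi'(0+)=\alpha^+$.

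Establishing $\Phi'(0+)=\alpha^+$ is the main obstacle. The support bound $\supp\ol\zeta\subseteq[-nr_0,nr_0]$ by itself yields only $\Phi(\lambda)\le\lambda r_0$, and the mean bound does not improve this in the range that matters, since a priori a thin bump of mass $\rme^{-o(n)}$ near the maximal position is not ruled out. I would exclude such bumps starting from the one–generation identity
\[
\la\ol\zeta,\rme^{\lambda x}\ra=\frac{\sum_{v\in\zeta_1}\rme^{\lambda v}\,|\zeta^{(v)}|\,\la\ol{\zeta^{(v)}},\rme^{\lambda x}\ra}{\sum_{v\in\zeta_1}|\zeta^{(v)}|},\qquad\zeta=\sum_{v\in\zeta_1}\zeta^{(v)}(\cdot-v),\ \ \zeta_1\in\cZ_1,\ \zeta^{(v)}\in\cZ_{n-1},
\]
and unwinding it over all $n$ generations while book‑keeping the reproduction budget: over‑weighting the high‑$v$ branches (where the weights $|\zeta^{(v)}|$ range over $[\rme^{b_0(n-1)},\rme^{b_1(n-1)}]$) is what lets mass climb above $\alpha^+ n$, but this costs against the range $[b_0,b_1]$, and balancing the gain in displacement against this cost — a general version of Cram\'er's theorem for the branching walk, of which Proposition~\ref{p:1.2} is the explicit instance for Example~\ref{sss:3} — should pin the excess of $\tfrac1n\log\la\ol\zeta,\rme^{\lambda x}\ra$ over $\lambda\alpha^+$ at $o(\lambda)$ uniformly in $n$. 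This large–deviations estimate is where the real work lies; granting it, the remaining steps are routine. An equivalent and perhaps more direct route is to run the same peeling on $\theta_n$ itself, deducing exponential decay by iterating the one–generation bound against the mean constraint on each sub‑configuration.
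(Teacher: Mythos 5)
There is a genuine gap: you correctly observe that the exponential Chebyshev bound reduces the lemma to the claim $\Phi'(0+) = \alpha^+$ (equivalently, $\Phi(\lambda) \le \lambda\alpha^+ + o(\lambda)$ as $\lambda \downarrow 0$), but you then state explicitly that ``this large-deviations estimate is where the real work lies'' and do not actually prove it. The ``peeling over one generation while book-keeping the reproduction budget'' paragraph is a gesture at a Cram\'er-type argument, not an argument; there is no actual control obtained on how the high-$v$ branches can be over-weighted within the constraint that $\zeta\in\cZ_n$. Since the lemma is precisely the assertion that such over-weighting cannot happen, you have in effect restated what needs to be proved (and indeed strengthened it) without supplying the content.

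The strengthening is also a liability. Establishing $\Phi'(0+) = \alpha^+$ would give $\theta_n := \sup_{\zeta\in\cZ_n}\ol\zeta([n(\alpha^++\epsilon),\infty)) \le \rme^{-c(\epsilon)n}$, i.e.\ \emph{exponential} tail decay uniformly over $\cZ_n$, whereas the lemma only needs $\theta_n \le \epsilon$. The paper does not prove exponential decay in general: it splits into the two cases $b_1 = b_0$ and $b_1 > b_0$. When $b_1 = b_0$ (deterministic reproduction rate), it builds a bounded-increment martingale along a uniformly random ancestral line and applies Azuma--Hoeffding, which does yield exponential decay. When $b_1 > b_0$, however, it argues by contradiction: assuming $\theta_n > \epsilon$ for some $\zeta\in\cZ_n$, it grafts onto the top $\epsilon$-mass of $\zeta$ a branching at rate $b_1' > b_0$ for $m$ generations (and rate $b_0$ elsewhere) to manufacture a $\zeta'\in\cZ_{n+m}$ with $\la\ol{\zeta'},x\ra > (n+m)(\alpha^+ + \epsilon/2)$, contradicting the definition of $\alpha^+$. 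That argument fixes $\epsilon$ and then chooses $m$ and $n$ large depending on $\epsilon$; it does not, as written, yield a rate $\epsilon = \epsilon(n)\to 0$, let alone an exponential one. So your route, if it works at all, requires genuinely new input beyond what the paper's proof supplies, and you have not given that input. You should either prove the needed uniform control of $\Phi_n(\lambda)/n$ directly (which would be a contribution beyond the lemma) or abandon the exponential-moment route and instead argue as the paper does: split on $b_1=b_0$ vs.\ $b_1>b_0$, use Azuma on a uniformly sampled ancestral line in the first case, and a ``graft and contradict'' construction in the second.
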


\begin{proof}
By the union bound, it is enough to show that for all $\epsilon > 0$, if $n$ is large enough then
\begin{equation}
\label{e:58.1}
\ol{\zeta} \big((-\infty \,,\, -n(\alpha^-+\epsilon)]\big) \leq \epsilon
\quad, \qquad
\ol{\zeta} \big([n(\alpha^++\epsilon) \,,\, \infty) \big) \leq \epsilon \,.
\end{equation}
Moreover, as the arguments are symmetric, we shall only treat the case of $\alpha^+$. The proof is different depending on whether $b_1 > b_0$ or $b_1 = b_0$. Suppose first that the former holds and choose $b_1' \in (b_0, \infty)$ (possibly $b_1$ itself) such that $\cZ_1(b'_1) \neq \emptyset$ and for any $m \geq 1$ also $\zeta_m \in \cZ_m(b_0)$ and $\zeta'_m \in \cZ_m(b_1')$.

Now let $n \geq 1$, $\epsilon > 0$ and $\zeta \in \cZ_n$ be such that the second inequality in~\eqref{e:58.1} does not hold. Then define $\zeta'$ from $\zeta$ by having all particles at or above $n(\alpha^+ + \epsilon)$ reproduce according to $\zeta'_m$, while all other particles reproduce according to $\zeta_m$. Since $\supp(\zeta'_m) \subseteq [-r_0 m, r_0 m]$ by Assumption (A4), we have
\begin{equation}
\zeta' \big( \big(\!-\!\infty ,\, n(\alpha^+ + \epsilon) - r_0 m \big) \big) 
\leq (1-\epsilon) |\zeta| \rme^{b_0 m} 
\quad, \qquad
|\zeta'| \geq \epsilon |\zeta| \rme^{b_1' m}  \,.
\end{equation}
This implies that 
\begin{equation}
\ol{\zeta}' \big( \big(\!-\!\infty ,\, n(\alpha^+ + \epsilon) - r_0 m \big) \big) 
\leq \frac{1-\epsilon}{\epsilon} \rme^{(b_0 - b'_1)m} =: \delta_\epsilon(m)\,.
\end{equation}
Observe that for any $\epsilon > 0$ we can make $\delta_\epsilon(m)$ arbitrarily small by choosing $m$ large enough. 

Using Assumption (A4) again we have $\supp(\zeta') \subseteq [-(n+m) r_0, (n+m) r_0]$ and consequently, 
\begin{equation}
\begin{split}
\la \ol{\zeta}' \,,\, x \ra & \geq \big(1-\delta_\epsilon(m) \big) \big(n(\alpha^+ + \epsilon) - r_0 m \big)
- \delta_\epsilon(m) (n+m) r_0\\
	& \geq n \big( (\alpha^+ + \epsilon) \big(1-\delta_\epsilon(m) \big) - \delta_\epsilon(m) r_0 \big) - r_0 m  \\
	& \geq n \big (\alpha^+ + \epsilon - 2 \delta_\epsilon(m) (r_0 + \epsilon) \big) - r_0 m \,,
\end{split}
\end{equation}
where we have used that $\alpha^+ \leq r_0$ to get to the last inequality. Therefore, for any $\epsilon > 0$, by choosing $m$ large enough and then $n$ large enough, we can ensure that
\begin{equation}
\la \ol{\zeta}', x \ra \geq (n+m) ( \alpha^+ + \epsilon /2) \,.
\end{equation}
Since $\zeta' \in \cZ_{n+m}$, this shows that $\alpha^+ \geq \alpha^+_{n+m}/(n+m) \geq \alpha^+ + \epsilon/2$, which is a contradiction. This shows~\eqref{e:58} in the case that $b_1 > b_0$.

Suppose now that $b_0 = b_1$ and let $\zeta_0, \dots, \zeta_n = \zeta$ for some $n \geq 1$ be a possible realization of the branching random walk in $n$ generations. By definition, 
\begin{equation}
\zeta_m = \sum_{x \in \zeta_{m-1}} \zeta_{m-1}^{(x)}(\cdot - x)
\end{equation}
where $\zeta_{m-1}^{(x)} \in \cZ_1(b_0)$ for all $x \in \zeta_{m-1}$. Observe that this already shows that $\alpha^+_n/n = \alpha^+_1 = \alpha^+$ for all $n \geq 1$. Then on a different probability space with measure $P$ and expectation $E$, define the random variables $X_0, X_1, \dots, X_n$ inductively such that $X_0 = 0$ and $X_k$ for $k = 1, \dots, n$ is the height of a particle in $\zeta_k$ uniformly chosen from all particles whose parent in $\zeta_{k-1}$ has height $X_{k-1}$.

Now, by construction $X_m$ has distribution $\ol{\zeta}_m$ under $P$, for $m=0, \dots, n$ and moreover clearly, $|X_m - X_{m-1}| \leq r_0$ and $E \big(X_m - X_{m-1}\,|\, X_{m-1}\big) \leq \alpha^+$ for $m=1, \dots, n$.
Then by the Azuma-Hoeffding inequality for the sub-martingale $\big(X_m - m \alpha^+ :\: m=0, \dots, n\big)$ (with respect to the natural filtration), for any $\epsilon > 0$, 
\begin{equation}
\nonumber
\ol{\zeta} \big(\big[n(\alpha^+ + \epsilon), \infty\big)\big) 
= P \big( X_n > n(\alpha^+ + \epsilon)  \big)
= P \big( X_n - n\alpha^+ > n\epsilon  \big) 
\leq \rme^{-(\epsilon n)^2/(8r_0^2 n)} \,,
\end{equation}
which will be come smaller than $\epsilon$ once $n$ is large enough. 
\end{proof}

The last part of the proof, can be reused to prove Proposition~\ref{p:1.4}.
\begin{proof}[Proof of Proposition~\ref{p:1.4}]
If Assumption (A5) does not hold, then almost surely, $|Z_1| = b_0 = b_1$ and $\la Z_1 \,,\, x \ra = 0$. Repeating the argument from the previous proof for the case $b_0 = b_1$, we see that $\alpha^\pm = 0$ and that in particular $(X_n)_{n \geq 0}$ as constructed in the proof, is a martingale with increments bounded by $r_0$. It follows via the Azuma-Hoeffding inequality, exactly as before, that
\begin{equation}
\ol{\zeta}_n \big(\big[-\sqrt{n}x \,,\, \sqrt{n}x\big]^\rmc\big) 
= P \big( |X_n| > \sqrt{n}x \big) 
\leq 2\rme^{-x^2/(2 r_0^2)} \,,
\end{equation}
for any $\zeta_n \in \cZ_n$ and $n \geq 1$. 
\end{proof}

Next we show that the optimal asymptotic rate of increase/decrease in the average particle height can be achieved by $1-\epsilon$ fraction of the particles and using configurations with an asymptotic reproduction rate of $\rme^{b_0 + \epsilon}$, for arbitrary small $\epsilon >0$. This gives the opposite direction to the statement of Lemma~\ref{l:8}.

\begin{lem}
\label{l:3.2}
For all $\epsilon > 0$, there exist $n_0 > 0$ such that for all $n \geq n_0$ we may find $\zeta^+, \zeta^- \in \cZ_n$ satisfying,
\begin{equation}
\label{e:M67}
\ol{\zeta}^+ \big(n[\alpha^+-\epsilon\,,\, \alpha^++\epsilon]\big) \geq 1 - \epsilon
\ , \quad
n^{-1} \log |\zeta^+| \leq b_0 + \epsilon
\ , \quad
\bbP(Z_n = \zeta^+) \geq \rme^{-\rme^{(b_0 + \epsilon)n}} 
\end{equation}
and
\begin{equation}
\label{e:M68}
\ol{\zeta}^- \big(-n[\alpha^--\epsilon\,,\, \alpha^-+\epsilon]\big) \geq 1 - \epsilon
\ , \quad
n^{-1} \log |\zeta^-| \leq b_0 + \epsilon
\ , \quad
\bbP(Z_n = \zeta^-) \geq \rme^{-\rme^{(b_0 + \epsilon)n}} \,.
\end{equation}
\end{lem}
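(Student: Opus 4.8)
I would prove the statement for $\zeta^+$ only; the statement for $\zeta^-$ follows by applying the $\zeta^+$-construction to the reflected law $Z_1(-\cdot)$, which again satisfies (A1)--(A5) with the same $b_0,b_1$ but with $\alpha^+$ and $\alpha^-$ interchanged. For $\zeta^+$ the plan is to reduce to a single ``local'' building block and then amplify it: fix $\delta>0$ (small compared to $\epsilon$, to be tuned) and construct a configuration $\eta\in\cZ_m$ over \emph{some} fixed number of generations $m=m(\delta)$ with
$\la\ol\eta,x\ra\ge (\alpha^+-\delta)m$ and $m^{-1}\log|\eta|\le b_0+\delta$; then take $\zeta^+:=\eta^{*k}*\xi$ with $k=\lfloor n/m\rfloor$ and $\xi$ a minimal-reproduction pad over the remaining $n-km<m$ generations, and verify the three requirements for $n$ large.

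\textbf{The building block (main obstacle).} If $b_0=b_1$ there is nothing to do: every configuration has reproduction rate exactly $b_0$, so it suffices to invoke the definition of $\alpha^+$ together with super-additivity/Fekete (as in Proposition~\ref{p:1.2b}) to get $\eta\in\cZ_m$ with $\la\ol\eta,x\ra\ge(\alpha^+-\delta)m$. The substance of the lemma is the case $b_0<b_1$, where a configuration realizing (nearly) $\alpha^+_m$ may be \emph{forced} to reproduce fast, so one cannot simply use it. I would start from such a near-optimal $\eta_0\in\cZ_{m_0}$ and then \emph{amortize} its surplus reproduction over many additional generations in which the overwhelming majority of the population reproduces at the minimal rate $\rme^{b_0}$, only a vanishing ``leader'' fraction (the highest-lying particles) performing the height-increasing moves; one uses here (i) additivity of average height under convolution, $\la\ol{\zeta*\zeta'},x\ra=\la\ol\zeta,x\ra+\la\ol{\zeta'},x\ra$, and (ii) the mechanism already exploited in the proof of Proposition~\ref{p:1.2b}, namely that grafting a single higher-reproduction branching onto an otherwise minimal-reproduction configuration yields a strictly positive net height gain. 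Super-additivity of $\alpha^\pm_n$ then lets one iterate this dilution, pushing the reproduction rate down to $b_0+\delta$ while only shrinking the realized height rate by a factor $1-\delta$. Guaranteeing that the dilution is controlled enough that $1-\epsilon$ (and not merely $\epsilon$) of the particles end up near the target height is the delicate point, and it is precisely here that the slack $+\epsilon$ in the admissible reproduction rate is spent: one pays a little extra reproduction to keep the leader population large enough to dominate generation $n$. I expect the verification of this step to be the longest and most technical part of the proof.

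\textbf{Concentration by self-convolution.} Given $\eta\in\cZ_m$ with $\la\ol\eta,x\ra\ge(\alpha^+-\delta)m$ and $m^{-1}\log|\eta|\le b_0+\delta$, note $\ol{\eta^{*k}}=(\ol\eta)^{*k}$ is the law of a sum $Y_1+\dots+Y_k$ of i.i.d.\ copies of a bounded ($|Y_i|\le r_0m$) variable with mean $\mu:=\la\ol\eta,x\ra$, where $\mu\in[(\alpha^+-\delta)m,\alpha^+ m]$ because $\alpha^+_m\le \alpha^+ m$. A Chebyshev bound with a window of width $k^{3/4}r_0m$ (or the Berry--Esseen-type CLT estimates collected in Section~\ref{s:3}) gives that $\ol{\eta^{*k}}$ places mass $\ge 1-k^{-1/2}$ within $k^{3/4}r_0m$ of $k\mu$; together with the trivial $O(m)$ shift produced by the pad $\xi\in\cZ_{n-km}(b_0)$ (this set is nonempty since $|Z_1|$ is integer valued, so $b_0$ is attained, and convolves), and since $m$ is fixed while $k\sim n/m\to\infty$, we get $\ol{\zeta^+}\!\big(n[\alpha^+-\epsilon,\alpha^++\epsilon]\big)\ge1-\epsilon$ for all large $n$. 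Moreover $|\zeta^+|=|\eta|^k|\xi|\le \rme^{(b_0+\delta)km}\rme^{b_0(n-km)}\le \rme^{(b_0+\delta)n}\le \rme^{(b_0+\epsilon)n}$, giving the second requirement.

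\textbf{Probability lower bound.} The configuration $\zeta^+$ is realized by a fixed deterministic choice of single-generation moves (everyone executes $\eta$ in each of the $k$ rounds, then the pad), involving only finitely many distinct elements of $\cZ_1$; hence $\bbP(Z_n=\zeta^+)\ge p_{\min}^{\,T}$, where $p_{\min}>0$ is the least probability among those finitely many single-generation configurations and $T=\sum_{j<n}|Z_j|\le n|\zeta^+|\le n\rme^{(b_0+\delta)n}$ is the total number of reproduction events. Choosing $\delta<\epsilon$ and $n$ large makes $T\log(1/p_{\min})\le \rme^{(b_0+\epsilon)n}$ (the right side being exponential while the left is only $n\,\rme^{(b_0+\delta)n}$), so $\bbP(Z_n=\zeta^+)\ge \rme^{-\rme^{(b_0+\epsilon)n}}$, which is the third requirement. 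Tuning $\delta$ at the outset so that all three ``$\le b_0+\delta$'' / ``$\le\epsilon$'' slacks close simultaneously completes the argument.
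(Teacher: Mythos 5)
Your overall architecture is sound and slightly more modular than the paper's: you reduce the lemma to constructing, for each $\delta>0$, a single building block $\eta\in\cZ_m$ with $\langle\ol\eta,x\rangle\ge(\alpha^+-\delta)m$ and $m^{-1}\log|\eta|\le b_0+\delta$, and then obtain the concentration by self-convolving $\eta^{*k}$ and invoking the CLT. That amplification step, the padding by $\cZ_{n-km}(b_0)$, the counting argument for $\bbP(Z_n=\zeta^+)\ge e^{-\rme^{(b_0+\epsilon)n}}$, and the reflection trick for $\zeta^-$ are all correct and close to what the paper does. However, the heart of the lemma is precisely the existence of that building block when $b_0<b_1$, and you only sketch it (``amortize the surplus reproduction over many generations, only a leader fraction performs the height-increasing moves, iterate the dilution'') while explicitly deferring the verification. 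That deferral is a genuine gap, because naive dilution (padding a near-optimal $\xi_{m'}$ by minimal-rate generations) destroys the height rate, so one cannot avoid the full leader mechanism: the paper's proof spends essentially all of its effort on exactly this step, building the good/bad decomposition $\zeta_{mj}=\zeta_{mj}^g+\zeta_{mj}^b$, keeping the top $\lceil\rme^{(b_0+\epsilon/2)m}\rceil$ offspring of good particles as new leaders, and carefully estimating both the size $|\zeta_{mk}^{b,\le l}|$ of stale dropouts and the fraction of them that can fall below $ml(\alpha^+-2\epsilon)-r_0m(k-l)$ (equations~(2.12)--(2.18) in the paper). None of that is present or obviously reconstructible from what you wrote, and the reference to the mechanism in Proposition~\ref{p:1.2b} is not enough, since that argument only yields positivity of $\alpha^+$, not near-optimality at near-minimal reproduction rate.

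One smaller inaccuracy: you describe the leaders as a ``vanishing fraction.'' In the paper's construction the good particles in generation $mk$ in fact form a \emph{constant} (bounded away from zero) fraction of $\zeta_{mk}$: they multiply at rate $\rme^{(b_0+\epsilon/2)m}$ while the dropouts only multiply at rate $\rme^{b_0 m}$, so $|\zeta_{mk}^b|\le C|\zeta_{mk}^g|$ (cf.~\eqref{e:M3.37}--\eqref{e:M3.38}). This is what makes the final concentration estimate work, and it is a useful corrective to keep in mind when you try to fill in the dilution step.
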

\begin{proof}
We shall prove only the existence of $\zeta^+$ satisfying~\eqref{e:M67} as the proof for the second statement is essentially identical. Thanks to Lemma~\ref{l:8}, it is enough to show~\eqref{e:M67} with $[\alpha^+-\epsilon\,,\,\alpha^++\epsilon]$ replaced by $[\alpha^+-\epsilon\,,\,\infty)$ in the first inequality. Now, fix any $\epsilon > 0$ and use Proposition~\ref{p:1.2b} to find $m$ large enough and $\xi_m \in \cZ_m$ such that $\la \ol{\xi}_m \,,\, x \ra \in m [\alpha^+ - \epsilon \,,\,\, \alpha^+]$. Then, fix also $n \geq 1$ and find integers $k \geq 0$, $r = 1, \dots, m-1$ such that $n = mk + r$. Finally, let $\xi_1 \in \cZ_1(b_0)$ be an arbitrary but fixed particle configuration.

Suppose first that $|\xi_m| = \rme^{(b_0 + \delta) m}$ for some $\delta > 0$. In this case, we decompose $\xi_m$ as $\xi_m^g + \xi_m^b$, where $\xi_m^g$ contains the top $\lceil \rme^{(b_0 + \epsilon/2)m} \rceil$ particles from $\xi_m$ (possibly all of them, if $|\xi_m|$ is not large enough) and $\xi_m^b$ contains the rest. Next we construct $\zeta_{mj} = \zeta_{mj}^g + \zeta_{mj}^b$ for $j \geq 0$  inductively as follows. For generation $0$ we set $\zeta_0^g = \delta_0$ and $\zeta_0^b = 0$. Then to get to generation $m(j+1)$ from generation $mj$ we set
\begin{equation}
\zeta_{m(j+1)}^g = \zeta_{mj}^g * \xi_m^g
\quad, \qquad
\zeta_{m(j+1)}^b = \zeta_{mj}^g * \xi_m^b + \zeta_{mj}^b * (\xi_1)^{*m}
\end{equation}
In words, the ``good'' particles from generation $mj$ are reproduced according to $\xi_m$, while the ``bad'' particles from generation $mj$ are reproduced according to $\xi_1$ for $m$ generations. Then the ``good'' particles in generation $m(j+1)$ are the top $\lceil \rme^{(b_0 + \epsilon/2)m} \rceil$ children of each of the good particles in generation $mj$, while the ``bad'' particles in generation $m(j+1)$ are all the remaining children.

From the construction, it follows with $\delta' := \min\{\epsilon/2,\delta\} > 0$ that,
\begin{equation}
\label{e:M3.36}
\big|\zeta_{mj}^g\big| = |\zeta_m^g|^j \in \big[\rme^{(b_0+\delta') mj} \,,\,\, 
\rme^{(b_0+\epsilon)mj} \big] \,.
\end{equation}
Moreover, the number of particles in $\zeta_{mk}^b$ which are descendants of particles in $\zeta_{mj}^g$ for $j \leq l < k$ is  
\begin{equation}
\label{e:M3.37}
\zeta_{mk}^{b,\leq l } := 
\sum_{j=0}^{l} |\zeta_{mj}^g| |\xi_m^b| |\xi_1|^{m(k-j-1)}
\leq |\zeta_{mk}^g||\xi_m^b|\rme^{-b_0 m} \sum_{j=0}^{l} \rme^{ -\delta' m(k-j)}
\leq C |\zeta_{mk}^g| \rme^{-\delta' m (k-l)} \,,
\end{equation}
with $C > 0$ depending on $m$ and $\zeta_m$. In particular, plugging in $l=k-1$, we have 
\begin{equation}
\label{e:M3.38}
|\zeta_{mk}| = |\zeta_{mk}^g| + |\zeta_{mk}^b| \leq (1+C) \rme^{(b_0+\epsilon) mk} \,.
\end{equation}.

In addition, by construction of $\xi^g_m$ we have  $\la \ol{\xi}_m^g \,,\, x \ra \geq \la \ol{\xi}_m \,,\, x \ra \geq m (\alpha^+ - \epsilon)$. It then follows from the central limit theorem applied to the distribution $\ol{\xi}_m^g$, that for all $l$ large enough,
\begin{equation}
\label{e:M3.39}
\ol{\zeta}_{ml}^g \big (\!-\!\infty, (\alpha^+ - 2\epsilon) ml \big) \leq \epsilon \,.
\end{equation}
Since within $(k-l)m$ generations, a particle's height can decrease by at most $r_0 m(k-l)$, the above shows, in particular, that for all $l \leq j < k$, the fraction of particles in $\zeta_{mk}^b$ which are descendants of $\zeta_{mj}^g$ and reach $\big(\!-\!\infty \,,\, (\alpha^+ - 2\epsilon) ml - r_0(k-l)m\big)$ at generation $mk$ is at most $\epsilon$. Using also~\eqref{e:M3.39} with $l=k$ we arrive to
\begin{equation}
\label{e:M3.40}
\zeta_{mk} \Big(\big(\!-\!\infty \,,\, ml (\alpha^+ - 2\epsilon) - r_0 m(k-l) \big)\Big)
\leq \big|\zeta_{mk}^{b, \leq l}\big| + \epsilon \big(\big|\zeta_{mk}\big| - \big|\zeta_{mk}^{b, \leq l}\big|\big) \,.
\end{equation}
Dividing by $|\zeta_{mk}|$ and using~\eqref{e:M3.37} yields
\begin{equation}
\ol{\zeta}_{mk} \Big(\big(\!-\!\infty \,,\, ml (\alpha^+ - 2\epsilon) - r_0 m(k-l) \big)\Big)
\leq C \rme^{-\delta' m (k-l)} + \epsilon \,.
\end{equation}
Then there exists $s \geq 0$, such that with $l := k-s$ and all $k$ large enough, the right hand side above is at most $2 \epsilon$, while the left hand side at least $\ol{\zeta}_{mk} \big(\!-\!\infty \,,\, mk(\alpha^+-3\epsilon) \big)$.
 
We now recall that $n=mk + r$ and therefore we can construct $\zeta^+ \in \cZ_n$ from $\zeta_{mk}$ by setting $\zeta := \zeta_{mk} * (\xi_1)^{*r}$. In light of the above and the fact that $\supp (\xi_1^{*r}) \in [-r r_0,\, r r_0]$, we can write for all $n$ large enough,
\begin{equation}
\ol{\zeta}^+\big((- \infty \,,\, n(\alpha^+-4\epsilon)\big) \leq 
\ol{\zeta}_{mk} \big((- \infty \,,\,  mk(\alpha^+-3\epsilon) \big) \leq 2\epsilon \,.
\end{equation}
Also for large $n$ from~\eqref{e:M3.38} we get that $|\zeta^+| \leq C \rme^{(b_0 + \epsilon)mk + b_0 r} \leq \rme^{(b_0 + 2\epsilon) n}$.
Finally, the only configurations involved in the construction of $\zeta^+$ are $\xi_1$ and $\xi_m$ which are fixed for any given $\epsilon$ and are obtained with positive probability by $Z_1$ and $Z_m$ respectively. Since at most $n|\zeta^+|$ particles reproduce to obtain $\zeta^+$, it follows that 
\begin{equation}
\bbP(Z_n = \zeta^+) \geq C^{n |\zeta^+|} \geq \rme^{-\rme^{(b_0+4\epsilon)n}} \,,
\end{equation}
as long as $n$ is large enough. Choosing $\epsilon/4$ instead of $\epsilon$ in the first place yields~\eqref{e:M67}.

If $|\xi_m|=\rme^{b_0 m}$, namely when $\delta=0$, one can still use the above argument, albeit with $\xi_m^b = 0$ and consequently with $\zeta_{mk}^b = 0$, $\zeta_{mk} = \zeta_{mk}^g = \xi_m^{*k}$ and $\zeta^+ = \zeta_{mk} * (\zeta_1)^{*r}$. In particular~\eqref{e:M3.36} and~\eqref{e:M3.39} with $l=k$ can be directly applied to $\zeta_{mk}$.
\end{proof}

Next, we show that one can have an arbitrarily high fraction of the particles arbitrarily close to height $0$, while using configurations with asymptotically minimal mean reproduction rate. This will be used in the dilation strategy.
\begin{lem}
\label{l:11a}
For all $\epsilon > 0$ small enough, there exists $n_0 > 0$, such that for all $n > n_0$, we may find $\zeta \in \cZ_n$ satisfying 
\begin{equation}
\label{e:67.5}
\ol{\zeta} \big([-n \epsilon \,,\, n\epsilon]\big) \geq 1 - \epsilon
\quad, \qquad
n^{-1} \log |\zeta| \leq b_0 + \epsilon 
\quad, \qquad
\bbP (Z_n = \zeta) \geq \rme^{-\rme^{n(b_0+\epsilon)}} \,.
\end{equation}
\end{lem}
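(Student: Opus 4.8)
The plan is to build $\zeta$ by a two-phase ``up--then--down'' construction, using the configurations furnished by Lemma~\ref{l:3.2}. First one moves a $1-\epsilon'$ fraction of the particles upward at the near-maximal rate $\alpha^+$ for a proportion $\alpha^-/(\alpha^++\alpha^-)$ of the $n$ generations, and then moves a $1-\epsilon'$ fraction of those downward at the near-maximal rate $\alpha^-$ for the remaining proportion $\alpha^+/(\alpha^++\alpha^-)$ of the generations; here $\epsilon'\in(0,\epsilon)$ is a small constant depending only on $\epsilon$, to be fixed at the end. Since the up-phase raises heights by about $n\tfrac{\alpha^-}{\alpha^++\alpha^-}\alpha^+$ and the down-phase lowers them by about $n\tfrac{\alpha^+}{\alpha^++\alpha^-}\alpha^-$, the two shifts cancel to leading order, leaving a $1-O(\epsilon')$ fraction of the particles within $O(\epsilon' n)$ of the origin, while Lemma~\ref{l:3.2} keeps the reproduction rate at $\rme^{b_0+O(\epsilon')}$ and guarantees the probability is not too small.

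Concretely, I would set $n_1:=\lfloor n\alpha^-/(\alpha^++\alpha^-)\rfloor$ and $n_2:=n-n_1$. By Proposition~\ref{p:1.2b}, $\alpha^\pm\in(0,r_0]$, so both $n_1\to\infty$ and $n_2\to\infty$ as $n\to\infty$, and $|n_1\alpha^+-n_2\alpha^-|\le\alpha^++\alpha^-\le 2r_0=:C$. For all $n$ large enough that $n_1$ and $n_2$ exceed the threshold of Lemma~\ref{l:3.2} applied with parameter $\epsilon'$, let $\zeta^+\in\cZ_{n_1}$ be the configuration that lemma provides at length $n_1$ (the ``$\zeta^+$'' of~\eqref{e:M67}) and $\zeta^-\in\cZ_{n_2}$ the one it provides at length $n_2$ (the ``$\zeta^-$'' of~\eqref{e:M68}); thus $\ol{\zeta}^+\big(n_1[\alpha^+-\epsilon',\alpha^++\epsilon']\big)\ge 1-\epsilon'$, $\ol{\zeta}^-\big(-n_2[\alpha^--\epsilon',\alpha^-+\epsilon']\big)\ge 1-\epsilon'$, $|\zeta^+|\le\rme^{(b_0+\epsilon')n_1}$, $|\zeta^-|\le\rme^{(b_0+\epsilon')n_2}$, $\bbP(Z_{n_1}=\zeta^+)\ge\rme^{-\rme^{(b_0+\epsilon')n_1}}$ and $\bbP(Z_{n_2}=\zeta^-)\ge\rme^{-\rme^{(b_0+\epsilon')n_2}}$. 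I would then take $\zeta:=\zeta^+*\zeta^-$; since realizing $Z_{n_1}=\zeta^+$ and then having each of the $|\zeta^+|$ particles reproduce over the next $n_2$ generations exactly according to $\zeta^-$ (relative to its position) is a positive-probability event on which $Z_n=\zeta$, this $\zeta$ lies in $\cZ_n$.

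To verify the three requirements in~\eqref{e:67.5}: the size bound is immediate, $|\zeta|=|\zeta^+|\,|\zeta^-|\le\rme^{(b_0+\epsilon')(n_1+n_2)}=\rme^{(b_0+\epsilon')n}$. For the probability bound, conditioning on $\{Z_{n_1}=\zeta^+\}$ and using the branching property (independence of the $|\zeta^+|$ subpopulations) gives $\bbP(Z_n=\zeta)\ge\bbP(Z_{n_1}=\zeta^+)\,\bbP(Z_{n_2}=\zeta^-)^{|\zeta^+|}\ge\rme^{-\rme^{(b_0+\epsilon')n_1}}\,\rme^{-|\zeta^+|\rme^{(b_0+\epsilon')n_2}}$, and since $|\zeta^+|\le\rme^{(b_0+\epsilon')n_1}$ the exponent is at least $-2\rme^{(b_0+\epsilon')n}\ge-\rme^{(b_0+2\epsilon')n}$ once $n\ge\log 2/\epsilon'$. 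For the height bound, write $\zeta=\sum_x\zeta^+(\{x\})\,\zeta^-(\cdot-x)$: the atoms $x\in n_1[\alpha^+-\epsilon',\alpha^++\epsilon']$ carry $\zeta^+$-mass at least $(1-\epsilon')|\zeta^+|$, and for each such $x$ the measure $\zeta^-(\cdot-x)$ puts mass at least $(1-\epsilon')|\zeta^-|$ on $x-n_2[\alpha^--\epsilon',\alpha^-+\epsilon']$, so a fraction at least $(1-\epsilon')^2$ of the particles of $\zeta$ lies in $n_1[\alpha^+-\epsilon',\alpha^++\epsilon']-n_2[\alpha^--\epsilon',\alpha^-+\epsilon']$, an interval with endpoints $(n_1\alpha^+-n_2\alpha^-)\pm\epsilon' n$ and hence contained in $[-\epsilon' n-C,\,\epsilon' n+C]$. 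For $n\ge C/\epsilon'$ this lies in $[-2\epsilon' n,2\epsilon' n]$, while $(1-\epsilon')^2\ge 1-2\epsilon'$; choosing $\epsilon':=\epsilon/4$ at the outset then yields all of~\eqref{e:67.5} for $n$ large.

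I do not expect a genuine obstacle. The two points that require care are that the phase-lengths $n_1,n_2$ both grow linearly with $n$ — which is precisely what $\alpha^\pm>0$ (Proposition~\ref{p:1.2b}) ensures, making Lemma~\ref{l:3.2} applicable in each phase — and that the rounding discrepancy $n_1\alpha^+-n_2\alpha^-=O(1)$ is negligible against the window $\epsilon' n$, which holds for $n$ large.
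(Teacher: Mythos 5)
Your proof is correct and follows essentially the same route as the paper: split the $n$ generations into two phases of lengths proportional to $\alpha^-/(\alpha^++\alpha^-)$ and $\alpha^+/(\alpha^++\alpha^-)$, apply Lemma~\ref{l:3.2} in each phase to get $\zeta^\pm$, and take $\zeta:=\zeta^+*\zeta^-$; the only cosmetic differences are that the paper bounds the rounding discrepancy by $\epsilon n$ (versus your uniform constant $2r_0$) and counts the mislocated fraction via a union bound (versus your equivalent $(1-\epsilon')^2$ product estimate).
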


\begin{proof}
Let $\epsilon > 0$ and for any $n \geq 0$ set $n^+ := \lfloor \alpha^-/(\alpha^+ + \alpha^-) \rfloor n$ and $n^- := n - n^+$. Then for all $n$ large enough, we shall have
\begin{equation}
\bigg| \frac{n^+ \alpha^+ - n^- \alpha ^-}{n} \bigg| < \epsilon \,.
\end{equation}
Thanks to Lemma~\ref{l:3.2}, if $n$ is large enough we can find $\zeta^+ \in \cZ_{n^+}$ and $\zeta^- \in \cZ_{n^-}$ such that~\eqref{e:M67},~\eqref{e:M68} hold with $n$ replaced by $n^+$ and $n^-$ respectively. Now set $\zeta := \zeta^+ * \zeta^-$ and observe that $\zeta \in \cZ_n$ and that by the union bound, 
\begin{equation}
\ol{\zeta} \big([-2n \epsilon \,,\, 2n\epsilon]^\rmc \big)
\leq \ol{\zeta}^+ \big(n^+[\alpha^+-\epsilon \,,\, \alpha^++\epsilon]^\rmc \big)
+
 \ol{\zeta}^- \big(-n^-[\alpha^--\epsilon \,,\, \alpha^-+\epsilon]^\rmc \big)
\leq 2 \epsilon \,.
\end{equation}
Since $\log |\zeta| = \log |\zeta^+| + \log |\zeta^-| \leq (n^+ + n^-) (b_0 + \epsilon) \leq n (b_0 + \epsilon)$ and
\begin{equation}
\bbP (Z_n = \zeta) \geq \bbP \big(Z_{n^+} = \zeta^+) \big(\bbP \big(Z_{n^-} = \zeta^-)\big)^{|Z_{n^+}|} \geq \rme^{-\rme^{n (b_0 + 2\epsilon)}} \,,
\end{equation}
we can rename $\epsilon$ to $2\epsilon$ and claim the result.
\end{proof}

Finally, we need the following ``steering'' result, which shows that an arbitrarily high fraction of the descendants of a particles can be shifted together at a prescribed linear speed and stay arbitrarily close to each other.
\begin{lem}
\label{l:10a}
For all $\eta > 0$, we may find $r \in (0, r_0]$ and $b \in [b_0, b_1] \cap \bbR$ for which the following holds. For any $\epsilon > 0$, there exists $n_0 > 0$ such that for all $x \in (-r, r)$, $n > n_0$, there is $\zeta \in \cZ_n$ satisfying,
\begin{equation}
\label{e:71.5}
\ol{\zeta}\Big(\big[xn - n^{\eta} ,\, xn + n^{\eta} \big]\Big) \geq 1-\epsilon 
\quad , \qquad
n^{-1} \log |Z_n| \leq b
\quad , \qquad
\bbP \big(Z_n = \zeta) \geq \rme^{-\rme^{2bn}} 
\,.
\end{equation}
\end{lem}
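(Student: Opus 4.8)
The plan is to build $\zeta$ in two stages: a \emph{shift} stage which drives the $(1-\epsilon)$-bulk of the population to height close to $xn$, and a \emph{steering} stage which simultaneously contracts this bulk to a window of width $O(1)$ around $xn$ and keeps it confined there through generation $n$. Set $\alpha:=\min\{\alpha^+,\alpha^-\}>0$ (Proposition~\ref{p:1.2b}), take $r$ to be a small enough fraction of $\alpha$, and fix a small auxiliary $\epsilon'=\epsilon'(\epsilon)>0$. By Lemma~\ref{l:3.2} choose $m$ large and $\xi^{+},\xi^{-}\in\cZ_m$ with $\ol{\xi^{\pm}}\big(\pm m[\alpha^{\pm}-\epsilon',\alpha^{\pm}+\epsilon']\big)\ge 1-\epsilon'$, with $\log|\xi^{\pm}|\le(b_0+\epsilon')m$, and with $\bbP(Z_m=\xi^{\pm})\ge\rme^{-\rme^{(b_0+\epsilon')m}}$; put $b:=b_0+\epsilon'\in[b_0,b_1]$. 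The entire construction below uses only the two fixed configurations $\xi^{\pm}$ (applied to various particles at various times), so at most $n|\zeta|$ reproductions occur, each according to one of two configurations of positive probability; this will immediately yield $\log|\zeta|\le bn$ and, since $b\ge b_0\ge\log 2>0$, also $\bbP(Z_n=\zeta)\ge c^{\,n|\zeta|}\ge\rme^{-\rme^{2bn}}$ once $n$ is large. Thus only the first inequality in~\eqref{e:71.5} requires real work.

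For the shift stage, assume $x\ge 0$; the case $x<0$ is symmetric with $\xi^{-}$ in place of $\xi^{+}$. Evolve the single initial particle by $\xi^{+}$ for $k_A:=\lfloor xn/\la\ol{\xi^{+}},x\ra\rfloor$ successive blocks of $m$ generations, reaching $(\xi^{+})^{*k_A}$ at time $T_A:=mk_A$. Since $\la\ol{\xi^{+}},x\ra\ge m(\alpha^{+}-2\epsilon')$ and $x<r$, we get $T_A\le n/2$; by additivity of the barycentre under convolution the barycentre of $\ol{(\xi^{+})^{*k_A}}$ equals $k_A\la\ol{\xi^{+}},x\ra=xn+O(m)$; and since $\Var(\ol{\xi^{+}})=O(m^2)$ and $k_A=O(n/m)$, the quantitative central limit estimates of Section~\ref{s:3} (or already Chebyshev) show that at least $1-\epsilon/3$ of the mass of $\ol{(\xi^{+})^{*k_A}}$ lies within $C\sqrt{mn}$ of $xn$.

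For the steering stage, from time $T_A$ to time $n$ I would, at the start of each block of $m$ generations, let every particle currently below height $xn$ reproduce according to $\xi^{+}$ and every particle at or above $xn$ reproduce according to $\xi^{-}$. One such block moves the $(1-\epsilon')$-bulk of a particle's descendants toward $xn$ by an amount of order $m\alpha^{\pm}$ (overshooting $xn$ by at most $O(m)$ when the particle is already within $O(m)$ of it), while the remaining $\epsilon'$ of its descendants move by at most $r_0m$ in either direction. Hence there are constants $c,C_0>0$ such that, for a bulk lineage, the distance $d$ to $xn$ after a block is at most $\max(d-cm,\,C_0m)$; since the shift stage left the bulk within $C\sqrt{mn}$ of $xn$ and the steering stage lasts $n-T_A\ge n/2\gg\sqrt{mn}/\alpha$ generations, every bulk lineage enters and then stays in $[xn-C_0m,\,xn+C_0m]\subseteq[xn-n^{\eta},xn+n^{\eta}]$ (for $n$ large), unless it atypically escapes. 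To be outside $[xn-n^{\eta},xn+n^{\eta}]$ at time $n$, a lineage must take non-bulk sub-steps in at least a fixed positive fraction of the last $\gtrsim n^{\eta}/(r_0m)$ blocks; as the per-block non-bulk probability is $\le\epsilon'<\alpha/r_0$, a Chernoff bound gives probability $\le\rme^{-cn^{\eta}/(r_0m)}$ per lineage, so at most $(n/m)\rme^{-cn^{\eta}/(r_0m)}\le\epsilon/3$ of the mass is lost this way. Together with the $\le\epsilon/3$ fraction that never entered the $C\sqrt{mn}$-window during the shift stage, at most $2\epsilon/3<\epsilon$ of $\ol\zeta$ lies outside $[xn-n^{\eta},xn+n^{\eta}]$, which is~\eqref{e:71.5}.

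The main obstacle is the steering-stage analysis: making rigorous the picture that the position-dependent dynamics makes the (signed) distance to $xn$ behave like a bounded-jump chain with a drift of size $\asymp m\alpha$ toward $0$, and deducing that the bulk is contracted into an $O(m)=o(n^{\eta})$ window and then confined there, uniformly over the $\Theta(n/m)$ blocks, despite the $\xi^{\pm}$ being genuine (non-point-mass) configurations which at every block both drift and spread the population and misbehave on an $\epsilon'$-fraction of offspring. In particular, the large-deviation (Chernoff) control of lineages that repeatedly misbehave and thereby drift out of the target window, together with the check that the mass lost this way --- summed over all $\Theta(n/m)$ blocks --- stays below $\epsilon$, is the delicate point; once that is in place, the shift stage and the bookkeeping for $|\zeta|$ and $\bbP(Z_n=\zeta)$ are routine.
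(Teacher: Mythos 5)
Your two-stage plan (shift, then position-dependent steering) is a genuinely different route from the paper's, which instead proves the lemma first for $\eta = 1/2 + \delta$ (a single convolution $\zeta_m'^{*k}$ plus the CLT for the bulk, padded by a generic configuration over the remaining $l \approx n/2$ generations) and then \emph{iterates}: assuming the lemma for $\eta = 2^{-l}+\delta$, the paper runs that construction for $n' = n - m$ generations with $m \approx n^{2^{-l}+2\delta}$ to trap the bulk in a window of width $(n')^{2^{-l}+\delta}$, and then lets each bulk particle $y$ reproduce for the remaining $m$ generations according to its \emph{own} $\zeta^{(y)}_m \in \cZ_m$ (again supplied by the same lemma at scale $m$) while the non-bulk particles reproduce by $(\zeta_1)^{*m}$ at rate $\rme^{b_0}$. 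Since $m^{1/2+\delta} \approx n^{2^{-(l+1)}+\delta'}$, each iteration halves the exponent. The bookkeeping is a one-block computation \eqref{e:127}-style, with no long-range coupling between particle height and branching rate.

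The gap in your steering stage is not a matter of bookkeeping; it is structural. You conclude that ``at most $(n/m)\rme^{-cn^\eta/(r_0 m)}$ of the mass is lost'' from a Chernoff bound over the $\Theta(n/m)$ blocks. That bound controls the escape probability of a path under a Markov-chain-on-blocks picture in which, at each block, a ``bad'' step is taken with probability $\le \epsilon'$. But $\ol\zeta$ is the uniform-\emph{leaf} measure, and this coincides with the uniform-per-block-child chain only when every parent at every generation has the same number of children (as in the $b_0 = b_1$ case that Lemma~\ref{l:8}'s Azuma argument exploits). In your steering stage the branching factor is position-dependent, $|\xi^+|$ for parents below $xn$ and $|\xi^-|$ for parents at or above, and Lemma~\ref{l:3.2} only pins these in the range $[\rme^{b_0 m}, \rme^{(b_0+\epsilon')m}]$ -- their ratio may be as large as $\rme^{\epsilon' m}$. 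A direct block recursion for the fraction $f_j := \ol\zeta^{(j)}\big(\{|y - xn| > c_j\}\big)$ gives only
\begin{equation}
f_{j+1} \;\le\; \epsilon'\,\rme^{\epsilon' m} \;+\; (1-\epsilon')\,\rme^{\epsilon' m} f_j \,,
\end{equation}
because the escaped mass can be produced by parents branching at rate up to $\rme^{(b_0+\epsilon')m}$ while the normalizing total is only guaranteed to grow at rate $\rme^{b_0 m}$. Since $(1-\epsilon')\rme^{\epsilon' m} > 1$ for $m \ge 2$, this recursion is supercritical: over $n/m$ blocks the distortion factor is of order $\rme^{\epsilon'(1-1/m)n}$, which swamps the Chernoff exponent $\rme^{-c n^\eta/m}$ for any fixed $\eta < 1$. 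Shrinking $\epsilon'$ does not help, because $m = m(\epsilon')$ is supplied by Lemma~\ref{l:3.2} and grows with $1/\epsilon'$ in an uncontrolled way, and $\epsilon'$ is in any case a constant independent of $n$. So the clean picture of ``a bounded-jump chain with a drift of size $\asymp m\alpha$ toward $0$'' does not transfer to the empirical measure, and the claimed $O(m)$ confinement (which, note, would be a strictly stronger statement than the $n^\eta$ in \eqref{e:71.5}) is not established.

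You flag the steering stage as ``the delicate point,'' which is correct, but the delicacy is not just in making the drift-plus-Chernoff picture rigorous -- it is that, with position-dependent branching, that picture is false at the level of $\ol\zeta$ and the escape fraction genuinely can accumulate. The paper's iterative halving is precisely designed to avoid keeping a position-dependent reproduction rule alive over $\Theta(n)$ generations: each recursion step applies a reproduction rule that depends on position only through a \emph{single} decision (bulk vs.\ non-bulk) applied once, and the non-bulk is suppressed at rate $\rme^{b_0 m}$ so that denominators and numerators can be matched without compounding.
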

\begin{proof}
We start by claiming the statement of the lemma with $\eta = 1/2 + \delta$ for any choice of $\delta>0$ and then iterate the argument to extend the result to arbitrarily small $\eta >0$. To this end, suppose first that $x=0$. In this case by~\eqref{e:1}, for any $\epsilon > 0$ as long as $n$ is large enough we have
\begin{equation}
\label{e:3.23}
\bbP \Big( \ol{Z}_n \big([-n^{1/2 + \delta} , n^{1/2+\delta}]\big) > 1-\epsilon 
\,,\,\, 
|Z_n| \leq \rme^{bn} \Big) 
\geq C > 0\,.
\end{equation}
with $b := 1 + \log \bbE |Z_1|$ and some $C > 0$, which depends only on the law of $Z_1$. This clearly implies~\eqref{e:71.5}

Suppose next that $x > 0$, let $\epsilon > 0$ be given and $n \geq 1$. By the uniformity in $x$ in the statement of the lemma, we may assume without loss of generality that $x > n^{-2/3}$. Since $\alpha^+ > 0$ as shown by Proposition~\ref{p:1.2b}, there exists $m \geq 1$, $0 < u \leq m r_0$, $b' \in [b_0, b_1] \cap \bbR$ and $\zeta'_m \in \cZ_m(b')$ such that $\la \ol{\zeta}'_m ,\, z \ra = u$. We then set
\begin{equation}
k := \lceil (xn/u)  \rceil 
\quad,\qquad
l := n - km \,,
\end{equation}
and observe that if $r = u/2m$ and
$n^{-2/3} < x \leq r$, then 
\begin{equation}
n^{1/3} \leq k \leq n 
\quad, \qquad
n/2 - m \leq l \leq n \,.
\end{equation}

Letting $\zeta'_{km} \in \cZ_{km}$ be the $k$-fold convolution of $\zeta'_m$ with itself, by the central limit theorem, for all $n$ large enough (depending only on $\epsilon$ and $\delta$),
\begin{equation}
\ol{\zeta}'_{km} \big( [xn-n^{1/2+\delta} \,,\, xn+n^{1/2+\delta}] \big)
\geq 
\ol{\zeta}'_{km} \big( [ku- k^{1/2+\delta/2} \,,\, ku+k^{1/2+\delta/2}] \big)
\geq 1-\epsilon \,,
\end{equation}
uniformly in all $x$ as above. At the same time, as in the case of $x=0$, if $n$ is large enough (depending only on $\epsilon$ and $\delta$) we can find $\zeta'_{l} \in \cZ_{l}$ such that 
\begin{equation}
\nonumber
\ol{\xi}'_{l} \big( [-n^{1/2+\delta}, n^{1/2+\delta}] \big)
\geq 
\ol{\xi}'_l \big( [-l^{1/2+\delta/2}, l^{1/2+\delta/2}] \big)
\geq 1-\epsilon 
\ , \quad
|\zeta'_l| \leq \rme^{bl}
\ , \quad
\bbP(Z_l = \zeta'_l) \geq C  > 0\,,
\end{equation}
with $b$ and $C$ as in~\eqref{e:3.23}.

We now set $\zeta := \zeta'_{km} * \zeta'_l$ and observe that $\zeta \in \cZ_n$ and  by the union bound we must have
\begin{equation}
\begin{split}
\ol{\zeta} \Big(\big[xn & - n^{1/2+2\delta} ,\, xn + n^{1/2+2\delta} \big]^\rmc \Big)  \\
& \leq \ol{\zeta'} \Big(\big[xn - n^{1/2+\delta},\, xn + n^{1/2+\delta} \big]^\rmc\Big) 
+\ol{\zeta'} \Big(\big[-n^{1/2+\delta},\, n^{1/2+\delta} \big]^\rmc\Big) 
\leq 2 \epsilon \,.
\end{split}
\end{equation}
for all $n$ large enough, uniformly in $x$ as above. Moreover, clearly $|\zeta| \leq \rme^{b'km + bl} \leq \rme^{b''n}$ with $b'' = \max\{b,b'\}$ and with such $b''$ we also have,
\begin{equation}
\bbP(Z_n = \zeta) \geq \prod_{i=1}^k \big(\bbP (Z_m = \zeta'_m)\big)^{\rme^{(i-1)mb'}}
	\big(\bbP(Z_l = \zeta'_l)\big)^{\rme^{km b'}}
\geq C^{(k+1) \rme^{kmb'}} \geq \rme^{-\rme^{2nb''}} \,,
\end{equation}
again, as along as $n$ is large. Using $2\epsilon$ and $2\delta'$ in place of $\epsilon$ and $\delta$,  this shows the statement of the lemma with $\eta = 1/2 + \delta$ and all $x \in [0, r)$ where $r=u/2m$. A symmetric argument extends this to all $x \in (-r, r)$ for a properly chosen $r > 0$.

We shall now show that, given some $l \geq 1$, if the statement of the lemma holds for all $\eta > 2^{-l}$, then it will also hold for all $\eta > 2^{-(l+1)}$. A simple induction argument will then finish the proof. Accordingly, fix any $l \geq 1$, $\delta > 0$ and suppose that the lemma indeed holds with $\eta = 2^{-l} + \delta$ and some $r, b \in (0,\infty)$. Let also $\epsilon > 0$, $x \in (-r/2, r/2)$ be given and $n \geq 1$. Setting $m := \lfloor n^{2^{-l} + 2 \delta} \rfloor$ and $n' = n-m$, by our assumption, if $n$ is large enough, then there exists $\zeta_{n'} \in \cZ_{n'}$ such that~\eqref{e:71.5} holds with $\eta=2^{-l} + \delta$ and $n'$, $\zeta_{n'}$ in place of $n$, $\zeta$ resp.

We now construct $\zeta$ from $\zeta_{n'}$ by letting any particle $y \in \zeta_{n'}$ satisfying $y \in \big[xn' - (n')^{2^{-l}+\delta} ,\, xn' + (n')^{2^{-l}+\delta}\big]$ reproduce in $m$ generations according to $\zeta^{(y)}_m \in \cZ_m$, where
\begin{equation}
\label{e:80}
\ol{\zeta}^{(y)}_m \Big( \big[xn - y - m^{1/2 + \delta} \,,\, xn - y + m^{1/2 + \delta} \big] \Big)
\geq 1-\epsilon 
\ \ , \ \ \ 
|\zeta^{(y)}_m| \leq \rme^{bm}
\ \ , \ \ \ 
\bbP \big(Z_m = \zeta^{(y)}_m\big) \geq \rme^{-\rme^{2bm}} \,,
\end{equation}
while all other particles reproduce according to $\zeta_m = (\zeta_1)^{*m}$ where $\zeta_1 \in \cZ_1(b_0)$ is an arbitrary but fixed particle configuration.
This is possible, once $n$ is large enough by our assumption and since
\begin{equation}
\big|xn-y \big| \leq \big| xn - xn' \big| + (n')^{2^{-l} + \delta}
< rm  \,,
\end{equation}
uniformly in $y$ and $x$.

But then $\zeta \in \cZ_n$ and by~\eqref{e:71.5} (with $n', \zeta'$, $\eta = 2^{-l} + \delta$) and~\eqref{e:80}, we have
\begin{equation}
\begin{split}
\ol{\zeta} \big( \big[xn - m^{1/2+\delta} \,,\, xn + m^{1/2+\delta} \big]^\rmc \big) 
& \leq \frac{ \epsilon |\zeta_{n'}| \rme^{b_0 m} + \big \la \zeta_{n'} \,,\, \epsilon |\zeta^{(\cdot)}_m| 
	1_{\{|\cdot-xn| \leq (n')^{2^{-l} + \delta}\}} \big \ra}{|\zeta|} \\
& \leq \frac{\epsilon |\zeta_{n'}| \rme^{b_0 m}}{|\zeta_{n'}| \rme^{b_0 m}}
	+ \frac{\epsilon |\zeta|}{|\zeta|} \leq 2\epsilon \,. 
\end{split}
\end{equation}
Since $m^{1/2 + \delta} \leq n^{2^{-(l+1)} + 2 \delta}$ if $\delta$ is small, this shows the first inequality in~\eqref{e:71.5} with $2 \epsilon$ in place of $\epsilon$ and $\eta = 2^{-(l+1)} + 2 \delta$. At the same time, from the construction it follows that for all $n$ large enough,
\begin{equation} 
|\zeta| \leq |\zeta_{n'}| \rme^{bm} \leq \rme^{bn}
\quad, \qquad
\bbP (Z_n = \zeta) \geq \bbP (Z_{n'} = \zeta_{n'}) \Big(\min \big\{ \rme^{-\rme^{2bm}},
	\rme^{-C m \rme^{b_0 m}} \big\}\Big)^{|\zeta_{n'}|}
\geq \rme^{-\rme^{2bn}} \,.
\end{equation}
This shows that the remaining inequalities in~\eqref{e:71.5} hold altogether yields the statement of the lemma for any $\eta > 2^{-(l+1)}$, as desired.
\end{proof}

\section{Technical Lemmas}
\label{s:3}
In this subsection we state some auxiliary lemmas, which will be used in the proof of the main theorem. Similar versions of these lemmas were stated and proved in~\cite{louidor2015large}. 
The first lemma deals with continuity of $\nu$, $\wt{I}^\pm$ and $\wt{J}$.
\begin{lem}
\label{l:10}
Let $A \in \cA$ be non-empty and $p \in (0,1)$.
\begin{enumerate}
  \item
    $(\rho, \zeta) \mapsto \nu(\rho A + \zeta) \in \rmC^{\infty}(\bbR^2)$.
  \item
    If $\wt{I}^\pm(A,p) \in [0, \infty)$ then with $x = \wt{I}^\pm(A,p)$ we have,
    \begin{equation}
      \nu(A \mp x) \geq p \, .
    \end{equation}
  In particular, if $p > \nu(A)$, then $\wt{I}^\pm(A,p) > 0$. 
  \item
    $\wt{J}(A,p) \in [0,1)$ and there exists $x \in \bbR$ such that with $u=\tilde{J}(A,p)$ we have,
    \begin{equation}
      \label{e:3}
      \nu \big( \big(A/\sqrt{1-u} \big) - x\big) \geq p \, .
    \end{equation}
  \item $\wt{J}(A,p) > 0$ if and only if $\min \{\wt{I}^+(A,p),\, \wt{I}^-(A,p)\} = \infty$. 
\end{enumerate}
\end{lem}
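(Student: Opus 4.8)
My plan is to treat the four parts in order; only part (3) requires real work. For part (1) I would reduce everything to the Gaussian distribution function $\Phi(t):=\nu((-\infty,t])$, which is $C^\infty$. Since $A\in\cA$, one can write $\mathbf 1_A=\sum_j(\mathbf 1_{(-\infty,\beta_j]}-\mathbf 1_{(-\infty,\alpha_j]})$ as a finite signed sum of indicators of half-lines, with $\alpha_j<\beta_j$ in $[-\infty,\infty]$; substituting $t\mapsto\rho t+\zeta$ for $\rho>0$ gives $\nu(\rho A+\zeta)=\sum_j\big(\Phi(\rho\beta_j+\zeta)-\Phi(\rho\alpha_j+\zeta)\big)$, a finite sum of compositions of $\Phi$ with affine maps (or constant maps, where an endpoint is infinite), hence $C^\infty$ in $(\rho,\zeta)$. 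I would remark that only $\rho>0$ is ever used in (2)--(4), where the dilation factor is $1$ or $1/\sqrt{1-u}\ge1$; across $\rho=0$ the map need not even be continuous (take $A=(0,\infty)$), so (1) is to be read on $\{\rho>0\}\times\bbR$. For part (2), fix the sign $+$ (the case $-$ follows by reflecting $A$): with $x^*:=\wt{I}^+(A,p)<\infty$, pick $x_n\downarrow x^*$ with $\nu(A-x_n)\ge p$; part (1) gives $\nu(A-x_n)\to\nu(A-x^*)$, so $\nu(A-x^*)\ge p$. The ``in particular'' then follows: if $\wt{I}^+(A,p)=0$, the first part yields $\nu(A)\ge p$, contradicting $p>\nu(A)$.

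Part (3) is the crux. I would first dispose of unbounded $A$: it then contains a half-line, so $\sup_x\nu(A-x)=1\ge p$, hence $\wt{J}(A,p)=0$ and the displayed inequality holds with $u=0$ and $|x|$ large. So assume $A\subseteq[-M,M]$. For nonemptiness of the defining set (and hence $\wt{J}(A,p)<1$), pick an interval $(a,b]\subseteq A$ with $a<b$; with $\rho:=1/\sqrt{1-u}$ and $x:=\rho(a+b)/2$ one has $\rho(a,b]-x=(-\rho(b-a)/2,\rho(b-a)/2]\subseteq\rho A-x$, so $\nu(\rho A-x)\ge2\Phi(\rho(b-a)/2)-1\to1$ as $u\to1$; thus the defining set is a nonempty subset of $[0,1)$ and its infimum $\wt{J}(A,p)$ lies in $[0,1)$. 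For the attainment of this infimum, set $F(\rho):=\sup_x\nu(\rho A-x)$: for $\rho$ in a compact subset of $(0,\infty)$ the sets $\rho A$ are uniformly bounded, so $\nu(\rho A-x)\to0$ uniformly in such $\rho$ as $|x|\to\infty$, whence the supremum is a maximum over a fixed compact set of $x$'s; combined with part (1) this gives continuity of $F$ on $(0,\infty)$, hence of $u\mapsto F(1/\sqrt{1-u})$ on $[0,1)$. Therefore $\{u\in[0,1):F(1/\sqrt{1-u})\ge p\}$ is closed in $[0,1)$, and being nonempty with infimum $u^*:=\wt{J}(A,p)<1$ it contains $u^*$; finally, for bounded $A$ the maximizing $x$ in $F(1/\sqrt{1-u^*})\ge p$ exists, and that is the $x$ sought.

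Part (4) then follows from (2) and (3). If $\wt{J}(A,p)>0$ but, say, $\wt{I}^+(A,p)<\infty$, then by (2) $\nu(A-\wt{I}^+(A,p))\ge p$, so $\sup_x\nu(A-x)\ge p$, which puts $u=0$ in the defining set of $\wt{J}$ and forces $\wt{J}(A,p)=0$, a contradiction; hence both $\wt{I}^\pm(A,p)=\infty$. Conversely, if $\wt{J}(A,p)=0$, then by (3) there is $x$ with $\nu(A-x)\ge p$, which gives $\wt{I}^+(A,p)\le x<\infty$ when $x\ge0$ and $\wt{I}^-(A,p)\le-x<\infty$ when $x<0$. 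The step I expect to be the main obstacle is the attainment claim in part (3): ruling out that the optimal shift escapes to $\pm\infty$ and establishing continuity of $u\mapsto\sup_x\nu(A/\sqrt{1-u}-x)$, both of which rest on the uniform-in-$\rho$ decay of $\nu(\rho A-x)$ as $|x|\to\infty$ and hence necessitate the separate treatment of unbounded $A$; by comparison, parts (1), (2) and (4) are routine once (1) and (3) are in place.
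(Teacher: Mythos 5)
Your proof is correct and follows the paper's overall strategy, with some refinements worth noting. For part~(1) you decompose $A$ into a finite signed sum of indicators of half-lines and reduce to the smoothness of the Gaussian CDF composed with affine maps, whereas the paper writes $\nu(\rho A + \zeta) = \int_A \tfrac{1}{\sqrt{2\pi}}\,\rme^{-(\rho t + \zeta)^2/2}\,\rho\,\rmd t$ and invokes dominated convergence; both are routine, but you correctly observe that the claimed smoothness cannot hold across $\rho=0$ (your example $A=(0,\infty)$ shows the map is not even continuous there, and for bounded $A$ the left and right $\rho$-derivatives at $0$ differ in sign since one must insert $|\rho|$ for $\rho<0$). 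The paper glosses over this; since only $\rho = 1/\sqrt{1-u}\geq 1$ is ever used, nothing is lost, but your reading of part~(1) as a statement on $\{\rho>0\}\times\bbR$ is the honest one. For part~(3) your argument is a fleshed-out version of the paper's: both split on whether $A$ is bounded, and both reduce attainment to a compactness argument. You make explicit two things the paper leaves implicit: (i) nonemptiness of the defining set for $\wt J(A,p)$, by centering an interval contained in $A$ and letting the dilation factor tend to infinity; and (ii) the uniform-in-$\rho$ decay of $\nu(\rho A - x)$ as $|x|\to\infty$, which is precisely what guarantees both the existence of a maximizing $x$ and the continuity of $u\mapsto\sup_x\nu(A/\sqrt{1-u}-x)$ needed to show the defining set is closed in $[0,1)$. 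Parts~(2) and~(4) are essentially identical to the paper's. In short: same route, a bit more care, and a worthwhile corrective observation on the domain in part~(1).
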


\begin{proof}
Part~1 follows from the dominated convergence theorem and standard arguments once we write 
\begin{equation}
\nu(\rho A + \zeta) = \int_A \frac{1}{\sqrt{2\pi}} e^{-\frac{(\rho t+\zeta)^2}{2}} \rho \rmd t
\end{equation}
since the integrand is in ${\rm C}^{\infty}(\bbR^2)$.

For part~2 and~3, if $A=\bbR$, then $\wt{I}^\pm(A,p) = \wt{J}(A,p) = 0$, and there is nothing to prove. Otherwise, define
\begin{equation}
  \varphi_A(u,x) = \nu\big( \big(A/\sqrt{1-u}\big) - x \big)
\end{equation}
which is in $\rmC^\infty([0,1) \times \bbR)$ by part~1.
Therefore $\{x \in \bbR_+ :\: \varphi_A(0, \pm x) \geq p\}$ is closed and bounded from below. Such set must contain a minimal value. This shows part~2.

For part~3, if $A$ contains a half-infinite interval, then since $\varphi_A(0, x) \to 1 > p$ if $x \to +\infty$ or $x \to -\infty$, we must have $\min \{\wt{I}^+(A,p) ,\, \wt{I}^-(A,p)\} < \infty$. Therefore $\wt{J}(A,p) = 0$ and \eqref{e:3} is satisfied 
with $u=0$ and $x = \min \{\wt{I}^+(A,p), \wt{I}^-(A,p)\}$, in light of part~2. Otherwise, $A$ is a finite union of finite intervals, and so there must exist $U < 1$, $R < \infty$ such that
$\varphi_A(u,x) \geq p$ for some $0 \leq u \leq U$ and $x$ with $|x| \leq R$ and $\varphi_A(u,x) < p/2$ for all $0 \leq u \leq U$ and $x$ with $|x| > R$. Thus, $\wt{J}(A,p)$ is the infimum of the continuous function $u$ over the 
non-empty compact set 
\begin{equation}
  \{(u,x) : \varphi_A(u,x) \geq p, \, 0 \leq u \leq U,\, |x| \leq R \} \,,
\end{equation}
which gives part~3.

As for part~4, it follows immediately from the definition that $\min\{\wt{I}^+(A,p),\, \wt{I}^-(A,p)\} < \infty$, implies $\wt{J}(A,p) = 0$. The other direction is a direct consequence of part~3 of the lemma. 
\end{proof}

For what follows, let us define $\nu_n$ for $n \geq 1$ as the intensity measure corresponding to $Z_n / \bbE Z_n$. That is, for any Borel set $A \subseteq \bbR$, we have
\begin{equation}
\label{e:3.6}
\nu_n(A) := \frac{\bbE Z_n(A)}{\bbE |Z_n|} \,.
\end{equation}
Observe that $\nu_n$ is the $n$-fold convolution of $\nu_1$ with itself and that by Assumption (A2) the latter is a probability distribution with mean zero and variance one. It therefore follows from the central limit theorem that $\nu_n(\sqrt{n} \cdot)$ converges weakly to the standard Gaussian measure $\nu$. Uniformity of this convergence is then given by,
\begin{lem}
\label{l:11}
Let $A \subseteq \bbR$ be a continuity set of $\nu(A)$, i.e. $\nu(\partial A) = 0$ and 
$R > 0$. Then,
\begin{equation}
  \lim_{n \to \infty} \sup_{\rho \in [R^{-1}, R]} \sup_{\zeta \in \bbR}
    \big|\nu_n(\sqrt{n}(\rho A + \zeta)) - \nu(\rho A + \zeta)\big| = 0 \,.
\end{equation}
\end{lem}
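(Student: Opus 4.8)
The plan is to reduce the uniform-in-$(\rho,\zeta)$ estimate to an ordinary Berry--Esseen bound plus a uniform-equicontinuity argument for the Gaussian. First I would record that $\nu_1$ is a probability measure with mean $0$ and variance $1$ (Assumption (A2)) and, crucially, has bounded support: by Assumption (A4), $\supp(\nu_1) \subseteq [-r_0, r_0]$, so in particular $\nu_1$ has a finite third absolute moment. Hence the classical Berry--Esseen theorem applies to the $n$-fold convolution $\nu_n = \nu_1^{*n}$, giving a constant $C < \infty$ (depending only on the law of $Z_1$) such that
\begin{equation}
\label{e:BE}
\sup_{t \in \bbR} \big| F_n(t) - \Phi(t) \big| \leq \frac{C}{\sqrt{n}} \,,
\end{equation}
where $F_n$ is the distribution function of $\nu_n(\sqrt{n} \cdot)$ and $\Phi$ is that of $\nu$. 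Since every $A \in \cA$ is a finite union of at most $k$ disjoint intervals (with $k$ depending only on $A$), and $\rho A + \zeta$ is then also a union of at most $k$ intervals, writing $\nu_n(\sqrt{n}(\rho A + \zeta))$ and $\nu(\rho A + \zeta)$ as sums and differences of at most $2k$ values of $F_n$, resp.\ $\Phi$, evaluated at the (scaled) endpoints, we get from~\eqref{e:BE} that
\begin{equation}
\sup_{\rho, \zeta} \big| \nu_n(\sqrt{n}(\rho A + \zeta)) - \nu(\rho A + \zeta) \big| \leq \frac{2kC}{\sqrt{n}} \,,
\end{equation}
which tends to $0$. Here the restriction to the algebra $\cA$ (rather than general continuity sets of $\nu$) is exactly what lets us control the error by finitely many one-dimensional Kolmogorov distances, with $k$ independent of $\rho, \zeta$.

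Alternatively, to avoid invoking Berry--Esseen and stay closer to the CLT-only spirit of the excerpt, one can argue as follows. By part~1 of Lemma~\ref{l:10}, $(\rho,\zeta) \mapsto \nu(\rho A + \zeta)$ is continuous, and on the compact set $[R^{-1},R]$ it is in fact bounded with bounded gradient, hence uniformly continuous; moreover $\nu(\rho A + \zeta) \to 0$ as $|\zeta| \to \infty$ uniformly in $\rho \in [R^{-1},R]$ when $A$ is bounded, and when $A$ contains a half-line one reduces to the bounded case plus a tail term. Fix $\eps > 0$; partition $[R^{-1},R] \times [-M,M]$ (with $M$ chosen so the tails outside are $< \eps$) into finitely many small rectangles, on each of which $\nu(\rho A + \zeta)$ varies by less than $\eps$; since $\rho A + \zeta$ ranges over a \emph{finite} family of sets whose boundaries are $\nu$-null up to this partition, apply the weak convergence $\nu_n(\sqrt{n}\cdot) \Rightarrow \nu$ finitely many times to make $|\nu_n(\sqrt{n} B_i) - \nu(B_i)| < \eps$ for each representative set $B_i$ and all $n$ large, and conclude by a sandwiching/monotonicity argument using that $A$, hence $\rho A + \zeta$, is a union of a bounded number of intervals.

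The main obstacle is the uniformity over the \emph{two}-parameter family $(\rho,\zeta)$ simultaneously with $n \to \infty$: a naive appeal to weak convergence gives convergence for each fixed set but not uniformly over an uncountable family. The cleanest way around this is the first approach — exploiting that $\nu_n$ has bounded support so that Berry--Esseen is available with an explicit $n^{-1/2}$ rate, and exploiting that membership in $\cA$ bounds the number of interval-endpoints uniformly — so that the sup over $(\rho,\zeta)$ collapses to a sup over $t \in \bbR$ of $|F_n(t) - \Phi(t)|$ times a constant. I would present the proof in that form, remarking that the $n^{-1/2}$ rate obtained here is in fact stronger than the bare convergence claimed, and will be reused later where quantitative control of the CLT error is needed.
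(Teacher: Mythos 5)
Your Berry--Esseen route is genuinely different from the paper's. The paper invokes Theorem~2 of the cited Billingsley--Topsøe reference on \emph{uniformity} in weak convergence, which reduces the claim to checking $\lim_{\delta\to 0}\sup_{\rho,\zeta}\nu\big((\partial(\rho A+\zeta))^\delta\big)=0$, and then verifies this condition by a short Lebesgue-measure computation, using that $\nu$ and Lebesgue measure are equivalent, so $\lambda(\partial A)=0$, and $\lambda\big(\rho(\partial A)^{\delta/\rho}+\zeta\big)\le R\lambda\big((\partial A)^{R\delta}\big)\to 0$. You instead obtain an explicit $n^{-1/2}$ Kolmogorov-distance bound and pass to the sets via the finitely-many-endpoints decomposition. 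The rate is a real extra (though in fact the paper never uses it). One small point you should add: under (A4), $\nu_n$ is atomic, so translating a Kolmogorov-distance bound into a bound on $|\nu_n(\sqrt n(\rho A+\zeta))-\nu(\rho A+\zeta)|$ requires controlling open versus closed endpoints; you need to note that $\sup_t\nu_n(\{\sqrt n\, t\})\le 2C/\sqrt n$, which also follows from Berry--Esseen and the continuity of $\Phi$.

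The genuine gap is one of scope. Lemma~\ref{l:11} is stated for an \emph{arbitrary} $\nu$-continuity set $A$, not only $A\in\cA$, and you acknowledge that both of your arguments rely on $\rho A+\zeta$ being a union of a number of intervals that is bounded uniformly in $(\rho,\zeta)$. A $\nu$-continuity set can have infinitely many connected components (e.g.\ $\bigcup_{k\ge 1}\big(\tfrac{1}{2k+1},\tfrac{1}{2k}\big)$, or a fat Cantor set), in which case the bound $2kC/\sqrt n$ degenerates, and the sandwich step in your second sketch fails for the same reason. The paper's proof has no such restriction, since the boundary-neighborhood criterion is checked directly for any $A$ with $\lambda(\partial A)=0$. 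In this paper the lemma is only ever applied with $A\in\cA$, so your narrower statement would suffice downstream; but as written your proposal does not prove the lemma as stated, and you should either restrict the hypothesis to $A\in\cA$ or replace the interval decomposition by the boundary-neighborhood argument.
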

\begin{proof}
By Theorem 2 in~\cite{BillFlem}, it is enough to check that 
\begin{equation}
\label{eqn:20.2}
  \lim_{\delta \to 0} \sup_{\zeta, \rho} \nu\big( (\partial(\rho A + \zeta))^\delta \big) = 0,
\end{equation}
where for a set $D \subset \bbR$, we set $D^{\delta} = \{x \in \bbR :\: \inf_{y \in D} |x-y| < \delta\}$ and the supremum is over $\rho$ and $\zeta$ as in the statement in the lemma. Since $\nu$ is equivalent to $\lambda$, Lebesgue measure on $\bbR$, 
we may show~\eqref{eqn:20.2} with $\lambda$ in place of $\nu$. But,
\begin{equation}
  \lambda \big((\partial(\rho A + \zeta))^\delta \big) = 
  \lambda \big(\rho(\partial A)^{\delta/\rho} + \zeta \big) 
  \leq R \lambda \big( (\partial A)^{R \delta} \big) \,,
\end{equation}
The last term goes to $0$ as $\delta \to 0$, since $\lambda(\partial A) = 0$.
\end{proof}

We shall need the following uniform Chernoff-Cram\'{e}r-type upper bound. 
\begin{lem}
\label{l:12}
Let $\bbX$ be a family of random variables on $\bbR$ with zero mean such that for some
$\theta_0 > 0$
\begin{equation}
\label{eqn:1a}
  \sup_{X \in \bbX} \bbE e^{\theta_0 X} < \infty \quad \text{ and } \quad
  \sup_{X \in \bbX} \bbE (X^-)^2 < \infty \, .
\end{equation}
Then there exists $C > 0$ such that for any $x > 0$ small enough,
any $m\geq 1$ and $X_1, \dots, X_m$ independent copies of random variables in $\bbX$
\begin{equation}
\label{eqn:2}
  \bbP \big(\tfrac{1}{m} \sum_{i=1}^m X_i \, > \, x \big) \leq e^{-C x^2 m}
\end{equation}
\end{lem}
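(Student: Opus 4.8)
The plan is the classical exponential Chebyshev (Chernoff--Cram\'er) argument. The entire content lies in obtaining a \emph{uniform} second-order bound on the moment generating functions of the members of $\bbX$ near the origin; once that is in place the rest is a one-line optimization over the tilting parameter.

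\textbf{Step 1: a uniform quadratic MGF bound.} I would first show that there is a finite constant $C'$, depending only on $\theta_0$ and the two suprema in~\eqref{eqn:1a}, such that
\[
\sup_{X \in \bbX} \bbE\, \rme^{\theta X} \;\leq\; 1 + C'\theta^2 \qquad\text{for every } \theta \in (0,\theta_0].
\]
Fix $X \in \bbX$. Since the first condition in~\eqref{eqn:1a} forces $\bbE X^+ < \infty$ and the second forces $\bbE X^- < \infty$, the assumption $\bbE X = 0$ lets me write $\bbE\,\rme^{\theta X} = 1 + \bbE\big(\rme^{\theta X} - 1 - \theta X\big)$, and I split this correction term over $\{X \geq 0\}$ and $\{X < 0\}$. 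On $\{X \geq 0\}$ I use that $u \mapsto 2(\rme^u - 1 - u)/u^2$ is nondecreasing on $[0,\infty)$ (immediate from its nonnegative power series), so that for $0 \le \theta X \le \theta_0 X$,
\[
\rme^{\theta X} - 1 - \theta X \;=\; \tfrac12(\theta X)^2\,\frac{2(\rme^{\theta X}-1-\theta X)}{(\theta X)^2} \;\leq\; \Big(\tfrac{\theta}{\theta_0}\Big)^2\big(\rme^{\theta_0 X}-1-\theta_0 X\big) \;\leq\; \Big(\tfrac{\theta}{\theta_0}\Big)^2 \rme^{\theta_0 X}.
\]
On $\{X < 0\}$ I use the elementary inequality $\rme^u - 1 - u \le u^2/2$, valid for all $u \le 0$, to get $\rme^{\theta X} - 1 - \theta X \le \tfrac12 \theta^2 (X^-)^2$. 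Taking expectations and invoking~\eqref{eqn:1a} yields the claim with $C' = \theta_0^{-2}\sup_{X\in\bbX}\bbE\,\rme^{\theta_0 X} + \tfrac12\sup_{X\in\bbX}\bbE(X^-)^2$.

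\textbf{Step 2: Chernoff bound and optimization.} Given independent $X_1,\dots,X_m$, each distributed as some member of $\bbX$, and any $\theta \in (0,\theta_0]$, Markov's inequality applied to $\rme^{\theta\sum_i X_i}$ together with independence and Step 1 gives
\[
\bbP\Big(\tfrac1m\sum_{i=1}^m X_i > x\Big) \;\leq\; \rme^{-\theta m x}\prod_{i=1}^m \bbE\,\rme^{\theta X_i} \;\leq\; \rme^{-\theta m x}\,(1 + C'\theta^2)^m \;\leq\; \rme^{-m(\theta x - C'\theta^2)}.
\]
Choosing $\theta = x/(2C')$ --- which lies in $(0,\theta_0]$ precisely when $x \le 2C'\theta_0$, i.e. for all $x>0$ small enough --- makes the exponent equal to $mx^2/(4C')$, establishing~\eqref{eqn:2} with $C = 1/(4C')$.

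\textbf{Main obstacle.} The only point requiring care is Step 1, and within it the handling of the positive tail: one must not lose a factor of $\theta^2$, which is why an inequality of the form $\rme^{\theta X} - 1 - \theta X \le (\theta/\theta_0)^2 \rme^{\theta_0 X}$ is needed --- it trades the small parameter $\theta$ for the available uniform exponential moment at $\theta_0$ --- and the monotonicity of $2(\rme^u-1-u)/u^2$ on $[0,\infty)$ is exactly what supplies it. Everything else (the negative-tail estimate, the first-order cancellation via $\bbE X = 0$, and the optimization over $\theta$) is routine.
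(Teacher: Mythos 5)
Your proof is correct, and the overall strategy is the same as the paper's: an exponential Chebyshev bound combined with a uniform second-order control of the moment generating functions near the origin, finished by the optimization $\theta \propto x$. The difference is in how that second-order control is obtained. The paper works with the cumulant generating function $L_X(\theta)=\log\bbE\,\rme^{\theta X}$, applies Taylor's theorem with Lagrange remainder to write $L_X(\theta)=\tfrac12 L''_X(\widehat\theta)\theta^2$, and then bounds $L''_X(\widehat\theta)$ uniformly over $X\in\bbX$ and $\widehat\theta\in(0,\theta_1)$ via the quotient rule, Jensen's inequality ($M_X\geq 1$), and the estimate $M''_X(\widehat\theta)=\bbE X^2\rme^{\widehat\theta X}\leq \bbE X^2 1_{X<0}+C\,\bbE\,\rme^{\theta_0 X}1_{X\geq 0}$, which requires choosing $\theta_1<\theta_0$ strictly so that the gap $\theta_0-\theta_1$ can absorb the polynomial factor $X^2$. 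You instead bound $M_X(\theta)-1$ directly, splitting $\rme^{\theta X}-1-\theta X$ over the sign of $X$ and using the monotonicity of $u\mapsto 2(\rme^u-1-u)/u^2$ on $[0,\infty)$ to transfer the exponential moment at $\theta_0$ without losing the $\theta^2$ factor; this is arguably more elementary (no differentiation, no Taylor remainder, no Jensen) and lets you work up to $\theta_0$ itself rather than a strictly smaller $\theta_1$. Both routes use the two hypotheses in exactly the same role — the exponential moment at $\theta_0$ controls the right tail, the second moment of $X^-$ controls the left tail — so the proofs are close in spirit, but your execution of the key estimate is cleaner.
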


\begin{proof}
Using the exponential Chebyshev's inequality we may bound the l.h.s. in \eqref{eqn:2}
for any $0 < \theta \leq \theta_1 < \theta_0$ by
\begin{equation}
\label{eqn:3}
  \exp \big\{-m \big(u \theta - m^{-1} \sum_{i=1}^m L_{X_i}(\theta) \big) \big\} \, ,
\end{equation}
where we use $L_X(\theta) = \log \bbE e^{\theta X}$ for the log moment generating 
function of $X$. Since $L_X(\theta)$ is in $\rmC^{\infty}([0,\theta_0))$ due to \eqref{eqn:1a}, we may use Taylor expansion to write (note that the first two terms are $0$)
\begin{equation}
  L_{X}(\theta) = \tfrac12 L^{\prime\prime}_{X}(\wh{\theta}) \theta^2 \, ,
\end{equation}
for some $\wh{\theta} \in (0, \theta)$. Now if we denote by $M_X(\wh{\theta}) = \bbE e^{\wh{\theta} X}$ the moment generating function of $X$ then 
\begin{equation}
  L^{\prime\prime}_{X}(\wh{\theta}) = 
    \frac{M^{\prime\prime}_{X}(\wh{\theta})M_{X}(\wh{\theta}) - (M^\prime_{X}(\wh{\theta}))^2}{M^2_{X}(\wh{\theta})}
    < C M_{X}(\theta_0) + \bbE (X^-)^2\, .
\end{equation}
This follows since $M_{X}(\wh{\theta}) \geq 1$ via Jensen's inequality and since
\begin{equation}
  M^{\prime\prime}_{X}(\wh{\theta}) = \bbE X^2 e^{\wh{\theta} X} 
    \leq \bbE X^2 1_{X < 0} + C \bbE e^{\theta_0 X} 1_{X \geq 0} \,,
\end{equation}
for some $C>0$ independent of $X \in \bbX$. Therefore \eqref{eqn:1a} implies that there exists $K > 0$ for which
\begin{equation}
  \sup_{X \in \bbX} \sup_{\wh{\theta} \in (0, \theta_1)} L^{\prime\prime}_{X}(\wh{\theta}) < K \,
\end{equation}
and thus
\begin{equation}
  u \theta - m^{-1} \sum_{i=1}^m L_{X_i}(\theta) \geq u\theta - \tfrac12 K\theta^2 \,.
\end{equation}
Using this bound with $\theta = u/K$ in \eqref{eqn:3} and assuming $u$ is small enough, the result follows with $C=(2K)^{-1}$ in \eqref{eqn:2}.
\end{proof}

The next lemma shows that the empirical distribution $\ol{Z}_n$ concentrates around its mean $\nu_n$. For what follows, given $\zeta \in \cZ$,
we shall formally write $\bbP(\cdot | Z_0 = \zeta)$ to denote the probability measure under which $Z=(Z_n)_{n \geq 0}$ is defined exactly as under $\bbP$, only that $Z_0 = \zeta$. The corresponding expectation will be denoted by $\bbE(\cdot | Z_0 = \zeta)$.

\begin{lem}
\label{l:13}
There exists $C, C^\prime >0$ such that for all $u > 0$ sufficiently small, finite point measure $\zeta$, subset $A \subseteq \bbR$ and $n \geq 1$,
\begin{equation}
\label{eqn:9}
  \bbP \Big(\ol{Z}_n(A) > 
    \big \la \ol{\zeta} \,, \nu_n(A - \cdot) \big\ra  + u\,\Big|\, Z_0 = \zeta \Big) \leq C e^{-C^\prime u^2 |\zeta|} \,.
\end{equation}
The same holds if we replace $>$ with $<$ and $+u$ with $-u$.
\end{lem}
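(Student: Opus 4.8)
The plan is to fix $\zeta$, condition on $\{Z_0 = \zeta\}$ throughout, and use the branching decomposition $Z_n = \sum_{x \in \zeta} Z_n^{(x)}(\cdot - x)$, where $\{Z_n^{(x)} :\: x \in \zeta\}$ are independent with the law of $Z_n$ under $\bbP(\cdot\,|\,Z_0 = \delta_0)$. Writing $\mu := \bbE |Z_n| = (\bbE|Z_1|)^n$ and $q := \la \ol{\zeta},\, \nu_n(A - \cdot) \ra$, and recalling that $\bbE Z_n^{(x)}(A - x) = \mu\, \nu_n(A - x)$ while $|\zeta|^{-1} \sum_{x \in \zeta} \nu_n(A - x) = q$, one checks that the event in~\eqref{eqn:9} coincides with $\big\{ \sum_{x \in \zeta} Y_x > 0 \big\}$, where $Y_x := Z_n^{(x)}(A - x) - (q + u)\, |Z_n^{(x)}|$ are independent and $\sum_{x \in \zeta} \bbE Y_x = - u\, \mu\, |\zeta|$. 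Thus it suffices to establish a tail bound of the form $\bbP\big( \sum_{x \in \zeta} (Y_x - \bbE Y_x) > u \mu |\zeta| \big) \leq C \rme^{-C' u^2 |\zeta|}$ for a sum of independent, mean-zero terms, which is exactly the setting of the uniform Chernoff--Cram\'{e}r estimate of Lemma~\ref{l:12} (alternatively, one may condition on the subpopulation sizes $(|Z_n^{(x)}|)_x$ and apply a weighted Hoeffding inequality to the bounded ratios $Z_n^{(x)}(A-x)/|Z_n^{(x)}|$).

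The obstruction to invoking Lemma~\ref{l:12} directly is that the $\mu$-normalised family $\{ \mu^{-1}(Y_x - \bbE Y_x) \}$, ranging over all $x$, all $n \geq 1$ and all $A$, is not uniformly sub-exponential: one only has $0 \leq Z_n^{(x)}(A-x) \leq |Z_n^{(x)}|$, so $|Y_x| \leq |Z_n^{(x)}|$, and under (A1) the non-negative martingale $|Z_n|/\mu$ is merely uniformly integrable (Kesten--Stigum), not uniformly exponentially integrable. The remedy is truncation at a large level $M\mu$: write $Y_x = Y_x 1_{\{|Z_n^{(x)}| \leq M\mu\}} + Y_x 1_{\{|Z_n^{(x)}| > M\mu\}}$. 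The centred, $\mu$-normalised truncated parts are bounded by $2M$ uniformly in $x,n,A$, hence satisfy~\eqref{eqn:1a} with constants depending only on $M$, so Lemma~\ref{l:12} controls their sum; and the bias introduced by the truncation is handled by uniform integrability, since $\big|\bbE Y_x - \bbE[Y_x 1_{\{|Z_n^{(x)}| \leq M\mu\}}]\big| \leq \mu\, \epsilon_M$ with $\epsilon_M := \sup_{n \geq 1} \bbE\big[(|Z_n|/\mu)\, 1_{\{|Z_n| > M\mu\}}\big] \downarrow 0$ as $M \to \infty$, so that for $M$ large the truncated mean sum is still of order $-u\mu|\zeta|$. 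Together these show that the truncated part of $\sum_x (Y_x - \bbE Y_x)$ exceeds $\tfrac13 u \mu |\zeta|$ with probability at most $C\rme^{-C'u^2|\zeta|}$.

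What remains --- and what I expect to be the main obstacle --- is to control the contribution of the atypically large subpopulations, i.e.\ to show that $\sum_{x \in \zeta} Y_x 1_{\{|Z_n^{(x)}| > M\mu\}}$, which is at most $\sum_{x \in \zeta} |Z_n^{(x)}| 1_{\{|Z_n^{(x)}| > M\mu\}}$, does not exceed $\tfrac13 u \mu |\zeta|$ off an event of probability $\leq C \rme^{-C' u^2 |\zeta|}$; combined with the preceding paragraph and the identity from the first paragraph, this yields~\eqref{eqn:9}. The plan here is to combine a Chernoff bound for the binomial count $\#\{x :\: |Z_n^{(x)}| > M\mu\}$ --- whose mean is $|\zeta|\, \bbP(|Z_n| > M\mu) \leq |\zeta|\, \epsilon_M / M$ --- with the pathwise size bound $|Z_n^{(x)}| \leq \rme^{b_1 n}$ available from (A3) when $b_1 < \infty$ (and an additional deterministic truncation when $b_1 = \infty$). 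The delicate point is precisely that the heavy tail permitted by (A1) allows a single subpopulation to carry a non-negligible fraction of $|Z_n|$, so genuine care is needed to push the aggregate contribution below $\tfrac13 u\mu|\zeta|$ with the required exponential-in-$|\zeta|$ probability. Finally, the lower-tail statement (with $<$ and $-u$ in~\eqref{eqn:9}) follows by the same argument applied to the independent variables $(q - u)|Z_n^{(x)}| - Z_n^{(x)}(A - x)$, whose expectations again sum to $-u\mu|\zeta|$.
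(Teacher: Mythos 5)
Your first paragraph sets up the right decomposition and is essentially parallel to the paper's: you condition on $\{Z_0 = \zeta\}$, branch into independent subtrees $Z_n^{(x)}$ rooted at the atoms $x \in \zeta$, and reduce the event to controlling a sum of independent terms. Your algebraic rearrangement via $Y_x = Z_n^{(x)}(A-x) - (q+u)|Z_n^{(x)}|$ is a clean way of combining what the paper splits into two separate events (denominator of the ratio too small; numerator too large); both routes lead to a sum of independent, mean-shifted random variables to which Lemma~\ref{l:12} should be applied.

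The genuine gap, which you yourself flag as ``the main obstacle,'' is in the third paragraph, and the underlying error is in the second: your premise that under the standing assumptions the martingale $W_n := |Z_n|/\bbE|Z_n|$ is ``merely uniformly integrable (Kesten--Stigum), not uniformly exponentially integrable'' is false. You are reading only (A1). Assumption (A3) forces $|Z_1| \geq 2$ almost surely, and for such Galton--Watson processes one has $\sup_{n \geq 1} \bbE\, \rme^{\theta_0 W_n} < \infty$ for some $\theta_0 > 0$; this is exactly the content of Theorem~4 in~\cite{ATH1}, which the paper cites. Once this is in hand there is nothing to truncate: the normalised quantities $\wh{Z}_n^{(x)}(A) = Z_n^{(x)}(A-x)/\bbE|Z_n|$ are sandwiched between $0$ and $|\wh{Z}_n^{(x)}| = |Z_n^{(x)}|/\bbE|Z_n|$, so the family $\{\pm(\wh{Z}_n^{(x)}(A) - \nu_n(A)) : n \geq 1,\, x \in \zeta,\, A \subseteq \bbR\}$ immediately satisfies the hypotheses~\eqref{eqn:1a} of Lemma~\ref{l:12}, and the desired bound drops out.

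Your proposed workaround (truncating $|Z_n^{(x)}|$ at $M\mu$ and then bounding the count of subpopulations exceeding $M\mu$) cannot be completed with the tools you allow yourself. Under (A1) alone, uniform integrability of $W_n$ only gives $\bbP(W_n > M) = o(1/M)$, not an exponential tail, and you correctly observe that a single atypically large subtree can carry a non-negligible fraction of $|Z_n|$. The aggregate $\sum_x |Z_n^{(x)}| 1_{\{|Z_n^{(x)}| > M\mu\}}$ then exceeds $\tfrac13 u\mu|\zeta|$ with probability decaying at best polynomially in $|\zeta|$, far short of the required $\rme^{-C'u^2|\zeta|}$. Also, the crude bound $|Z_n^{(x)}| \leq \rme^{b_1 n}$ is of no use here: $b_1$ may be infinite, and even when finite this bound does not interact usefully with the normalisation by $\mu = (\bbE|Z_1|)^n$. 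The fix is not a more careful truncation but recognising that (A3) supplies the uniform exponential moment bound that makes the truncation unnecessary in the first place.
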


\begin{proof}
For $x \in \zeta$, let $Z_n^{(x)}$ be distributed as $Z_n(\cdot - x)$, where $Z_n$ is the branching random walk process with $Z_0 = \delta_0$. Suppose also that $\big( Z_n^{(x)} :\: x \in \zeta \big)$ are all defined on the same probability space and are independent of each other. Writing $\wh{Z}_n^{(x)} := Z_n^{(x)}/\bbE \big|Z_n^{(x)}\big|$, under $\bbP \big( \cdot | Z_0 = \zeta \big)$ we have
\begin{equation}
  \ol{Z}_n(A) \overset{\rmd} = 
  \frac{\tfrac{1}{|\zeta|} \sum_{x \in \zeta} \wh{Z}_n^{(x)}(A)}
       {\tfrac{1}{|\zeta|} \sum_{x \in \zeta} \big|\wh{Z}_n^{(x)}\big| } \, .
\end{equation}
Therefore, the left hand side of \eqref{eqn:9} is bounded above by
\begin{equation}
\label{eqn:12}
    \bbP \Big(\tfrac{1}{|\zeta|} \sum_{x \in \zeta} \big|\wh{Z}_n^{(x)}\big|  < 
      1-\tfrac{u}{2} \Big) \, + \, 
     \bbP \Big(\tfrac{1}{|\zeta|} \sum_{x \in \zeta} \wh{Z}_n^{(x)}(A)  > 
       \tfrac{1}{|\zeta|} \sum_{x \in \zeta} \nu_n(A - x) + \tfrac{u}{3} \Big)
\end{equation}
as long as $u$ is small enough. Now Theorem~4 in \cite{ATH1} gives a uniform bound
on the moment generating function $e^{\theta |\wh{Z}^{(x)}_n|}$ for all $n \geq 1$, $x \in \zeta$ and $\theta \in [0,\theta_0]$, for some $\theta_0 > 0$. This uniform bound can be extended to include
also the moment generating functions of (the stochastically smaller) $\wh{Z}_n^{(x)}(A)$ for all $A \subseteq \bbR$ and $x \in \zeta$ in the same range of $\theta$. The non-negativity of all these random variables imply that we may extend this bound also to all $\theta < 0$. Thus, it is not difficult to see that the family of random variables 
\begin{equation}
  \bbX = \{\pm(\wh{Z}^{(x)}_n(A) - \nu_n(A)) : \: n \geq 1,\, x \in \zeta,\, A \subseteq \bbR\}
\end{equation}
satisfies the conditions in Lemma~\ref{l:12}, whence \eqref{eqn:12}
is bounded above by $C e^{-C^\prime u^2 |\zeta|}$ for some $C, C^\prime > 0$ as desired. 

Replacing $A$ with $A^\rmc$ , we obtain \eqref{eqn:9} 
with $<$, $-u$ in place of $>$, $+u$. 
\end{proof} 

\section{Proof of Theorem~\ref{thm:1}}
\label{s:4}
We are now ready for,
\begin{proof}[Proof of Theorem~\ref{thm:1}]
Let $A \in \cA$ and $p \in [0,1)$ be given such that $p > \nu(A)$. Suppose first that $I(A,p) < \infty$ and that $I(A,\cdot)$ is continuous at $p$. We shall show~\eqref{e:5} by proving that the right hand side bounds the left hand side from below and from above. Starting with the upper bound, by the obvious monotonicity of $b_0 \wt{I}^\pm(A,\cdot)/\alpha^\pm$ if $I(A,\cdot)$ is continuous at $p$, it must follow that either $b_0 \wt{I}^+(A,p) / \alpha^+ = I(A,p)$ and $\wt{I}^+(A,\cdot)$ is continuous at $p$ or $b_0 \wt{I}^-(A,p) / \alpha^- = I(A,p)$ and $\wt{I}^-(A,\cdot)$ is continuous in $p$. Since both cases are handled exactly in the same way, let us assume that the former happens, namely that $b_0 \wt{I}^+(A,p) / \alpha^+ = I(A,p)$ and that $p$ is a continuity point for $\wt{I}^+(A, \cdot)$. This implies that for any $\epsilon > 0$, there exists $\delta >0$ and $0 < x < \wt{I}^+(A,p) + \epsilon$ such that, $\nu(A - x) \geq p + \delta$

Now set $x_n := \sqrt{n} x$, $m_n := \lfloor x_n/\alpha^+ \rfloor$ and use Lemma~\ref{l:3.2} to conclude that if $n$ is large enough, there exists $\zeta_{m_n} \in \cZ_{m_n}$ such that
\begin{equation}
\label{e:105}
\ol{\zeta}_{m_n} \Big(\big[x_n \big(1-\tfrac{2\epsilon}{\alpha^+}) \,,\, 
						   x_n \big(1+ \tfrac{2\epsilon}{\alpha^+}\big) \big]\Big) \geq 1-\epsilon
\quad , \qquad
\bbP(Z_{m_n} = \zeta_{m_n}) \geq \rme^{-\rme^{(b_0 + \epsilon) m_n}} \,.
\end{equation}
Using the Markov property, we can then write
\begin{equation}
\label{e:106}
\bbP \big( \ol{Z}_n (\sqrt{n}A) \geq p \big)
\geq \bbP \big( Z_{m_n} = \zeta_{m_n} \big)
	\bbP \big( \ol{Z}_{n-m_n} (\sqrt{n}A) \geq p \,\big|\, Z_0 = \zeta_{m_n} \big) \,.
\end{equation}
The first probability on the right hand side of the inequality is bounded below by
\begin{equation}
\exp\bigg\{-\rme^{(b_0 + \epsilon) \sqrt{n} (\wt{I}^{+}(A,p) + \epsilon)/\alpha^+}\bigg\} 
\geq \exp\bigg\{-\rme^{\sqrt{n} (I(A,p) + C \epsilon)}\bigg\} \,,
\end{equation}
provided $\epsilon$ is small enough.

For the second probability in the right hand side of~\eqref{e:106}, recall the definition of $\nu_n$ in~\eqref{e:3.6} and use~\eqref{e:105} and Lemma~\ref{l:11} with $n$ large enough to bound $\big \la \ol{\zeta}_{m_n} \,,\, \nu_{n-m_n} \big(\sqrt{n}A - \cdot\big) \big \ra$ from below by
\begin{equation}
\nonumber
\begin{split}
\Big \la \ol{\zeta}_{m_n} & \,, \nu_{n-m_n} \Big(\sqrt{n-m_n} \Big(\sqrt{\tfrac{n}{n-m_n}} A - \tfrac{\cdot}{\sqrt{n-m_n}} \Big) \Big) 1_{[x_n(1-2\epsilon/\alpha^+)\,,\, x_n(1+2\epsilon/\alpha^+)]} \Big \ra  \\
& \geq 
(1-\epsilon) \inf \Big\{ \nu_{n-m_n} \Big(\sqrt{n-m_n} \Big(\sqrt{\tfrac{n}{n-m_n}} A - z \Big)\Big) :\: z \in [x(1-2\epsilon/\alpha^+) \,,\, x(1+3\epsilon/\alpha^+)] \Big\} \\
& \geq  
	\inf \Big\{ \nu \Big(\sqrt{\tfrac{n}{n-m_n}} A - z \Big) :\: z \in [x(1-2\epsilon/\alpha^+) \,,\, x(1+3\epsilon/\alpha^+)] \Big\} - 2 \epsilon\\
\end{split}
\end{equation}
Then, thanks to part 1 of Lemma~\ref{l:10} and the definition of $x$, by choosing $\epsilon$ small enough, we can have the last term bounded from below by $p + \delta / 2$ for all $n$ large enough.
It then follows from Lemma~\ref{l:13} that for all $n$ large enough, the second probability in the right hand side of~\eqref{e:106} is at least 
\begin{equation}
\bbP \Big( \ol{Z}_{n-m_n} (\sqrt{n}A) \geq \big \la \ol{\zeta}_{m_n} \,,\, \nu_{n-m_n} \big(\sqrt{n}A - \cdot \big) \big \ra - \tfrac{\delta}{2} \,\Big|\, Z_0 = \zeta_{m_n} \Big) 
\geq 1 - C \rme^{-C' \delta^2 |\zeta_{m_n}|} > 1/2\,.
\end{equation}
Using the above bounds in~\eqref{e:106}, we then get
\begin{equation}
\limsup_{n \to \infty} \tfrac{1}{\sqrt{n}} \log \big[-\log \bbP \big( \ol{Z}_n (\sqrt{n}A) \geq p \big)\big] \leq I(A,p) + C \epsilon \,,
\end{equation}
and since $\epsilon$ are arbitrary this gives the upper bound.

Turning to the lower bound, let $\epsilon' > 0$, set $m_n := b_0^{-1} I(A,p) \sqrt{n} (1-\epsilon')$ and use the Markov property to write
\begin{equation}
\label{e:5.6}
\bbP \big(\ol{Z}_n(\sqrt{n}A) \geq p\big) = \sum_{\zeta_{m_n} \in \cZ_{m_n}}
\bbP \big(Z_{m_n} = \zeta_{m_n} \big) \bbP \big(\ol{Z}_{n-m_n}(\sqrt{n}A) \geq p  \,\big|\, 
Z_0 = \zeta_{m_n} \big) \,.
\end{equation}
Then for any $\delta > 0$, using Lemma~\eqref{l:8} 
 with $\epsilon := \min\{\epsilon' \alpha^+/2,\, \epsilon' \alpha^-/2,\, \delta\}$ for all $n$ large enough, we have
\begin{equation}
\ol{\zeta}_{m_n} \Big(\sqrt{n} (1-\epsilon'/2) \big[\wt{I}^-(A,p) \,,\,\wt{I}^+(A,p)\big]^\rmc \Big)
\leq \ol{\zeta}_{m_n} \Big(m_n \big[-\alpha^- - \epsilon \,,\, \alpha^+ + \epsilon\big]^\rmc \Big) \leq \delta \,.
\end{equation}

Using Lemma~\ref{l:10}, Lemma~\ref{l:11}, for all $\delta > 0$ we can find $n$ large enough, such that
\begin{equation}
\begin{split}
\label{e:5.8}
\big \la \ol{\zeta}_{m_n} & \,,\, \nu_{n-m_n} \big(\sqrt{n}A - \cdot\big) \big \ra  \\
& \leq \Big \la \ol{\zeta}_{m_n} \,,\, \nu_{n-m_n} \Big(\sqrt{n-m_n} \Big(\sqrt{\tfrac{n}{n-m_n}} A - \tfrac{\cdot}{\sqrt{n-m_n}} \Big)\Big) 1_{\{
 \sqrt{n} (1-\epsilon'/2) [\wt{I}^-(A,p) \,,\,\wt{I}^+(A,p)]\}}  \Big \ra 
 + \delta   \\
& \leq \Big \la \ol{\zeta}_{m_n} \,,\, \nu \Big(\sqrt{\tfrac{n}{n-m_n}} A - \tfrac{\cdot}{\sqrt{n-m_n}} \Big)  1_{\{
 \sqrt{n} (1-\epsilon'/2) [\wt{I}^-(A,p) \,,\,\wt{I}^+(A,p)]\}} \Big \ra + 2 \delta \\
& \leq \Big \la \ol{\zeta}_{m_n} \,,\, \nu \Big(A - \tfrac{\cdot}{\sqrt{n}} \Big)  1_{\{
 \sqrt{n} (1-\epsilon'/2) [\wt{I}^-(A,p) \,,\,\wt{I}^+(A,p)]\}} \Big \ra + 3 \delta \,.
\end{split}
\end{equation}

By definition, the integral in the last line is at most 
\begin{equation}
\max \big\{ \nu \big( A- y \big) :\: y \in  \big[-(1-\epsilon'/2)(\wt{I}^-(A,p) \,,\, (1-\epsilon'/2)\wt{I}^+(A,p) \big] \big\} < p \,.
\end{equation}
Therefore, if we choose $\delta > 0$ small enough, we can ensure that 
\begin{equation}
\big \la \ol{\zeta}_{m_n} \,, \nu_{n-m_n} \big(\sqrt{n}A - \cdot\big) \big \ra 
\leq p - u \,.
\end{equation}
for some $u > 0$ and all $n$ large enough. This implies via Lemma~\ref{l:13} that
\begin{equation}
\bbP \big(\ol{Z}_{n-m_n}(\sqrt{n}A) \geq p  \,\big|\, 
Z_0 = \zeta_{m_n} \big) 
\leq C \rme^{-C' u^2 |\zeta_{m_n}|}
\leq C \exp \big(\!-\! C' u^2 \rme^{I(A,p)(1-\epsilon') \sqrt{n}} \big)\,,
\end{equation}
where the last inequality follows since $|Z_{m_n}| \geq \rme^{b_0 m_n}$.
Plugging this in~\eqref{e:5.6}, we obtain
\begin{equation}
\liminf_{n \to \infty} \tfrac{1}{\sqrt{n}}
	\log \big[-\log \bbP \big( \ol{Z}_n (\sqrt{n}A) \geq p \big)
	\geq I(A,p) (1- \epsilon') \,.
\end{equation}
Since $\epsilon' > 0$ was arbitrary, the lower bound follows.

Suppose now that $I(A,p) = \infty$ and that $J(A,\cdot)$ is continuous at $p$. By Lemma~\ref{l:10} part 4, we know that $J(A,p) > 0$. Again, we shall show~\eqref{e:6} by bounding the left hand side by the right hand side separately from below and from above. Starting with the upper bound, fix any $\epsilon' > 0$ and use the continuity of $\wt{J}(A,\cdot) = b_0^{-1} J(A,\cdot)$ at $p$ and Lemma~\ref{l:10} part 3, to conclude that there exists $\delta > 0$, $u \in (0,1)$ and $x \in \bbR$ such that
\begin{equation}
\label{e:122}
0 < u < \wt{J}(A,p) + \epsilon'
\quad, \qquad 
\nu\big(\big(A/\sqrt{1-u}\big) - x\big) \geq p + \delta \,.
\end{equation}
Then with $m_n = \lceil un \rceil$, any $\epsilon > 0$ and all $n \geq 1$ large enough, use Lemma~\ref{l:11a} to find $\zeta_{m_n} \in \cZ_{m_n}$ such that
\begin{equation}
\label{e:124}
\ol{\zeta}_{m_n} \big([-\epsilon m_n, \epsilon m_n]\big) \geq 1 - \epsilon
\quad, \qquad
|\zeta_{m_n}| \leq \rme^{m_n(b_0 + \epsilon)}
\quad, \qquad
\bbP(Z_{m_n} = \zeta_{m_n}) \geq \rme^{-\rme^{m_n(b_0 + \epsilon)}} \,.
\end{equation}

Now set 
\begin{equation}
x_n := x \sqrt{n-m_n} \quad, \qquad
k_n := \lceil m_n \sqrt{\epsilon} \rceil \quad,\qquad
m'_n = m_n + k_n  \,,
\end{equation}
and define $\zeta_{m'_n} \in \cZ_{m'_n}$ from $\zeta_{m_n}$, by having each particle $y \in \zeta_{m_n}$ with $|y| \leq \epsilon m_n$ reproduce according to some $\zeta^{(y)}_{k_n} \in \cZ_{k_n}$ satisfying
\begin{equation}
\label{e:125}
\ol{\zeta}^{(y)}_{k_n} \Big(\big[x_n - y - k_n^{1/3} \,, x_n - y + k_n^{1/3} \big] \Big)
\geq 1-\epsilon 
\quad, \quad
\big|\zeta^{(y)}_{k_n} \big| \leq \rme^{b k_n}
\quad, \quad
\bbP \big(Z_{k_n} = \zeta^{(y)}_{k_n} \big) 
\geq \rme^{-\rme^{2b k_n}} \,,
\end{equation}  
while all other particles reproduce according to $\zeta_{k_n} := (\zeta_1)^{*k_n}$, where $\zeta_1 \in \cZ_1(b_0)$ is any fixed but arbitrary configuration. The existence of such $\zeta^{(y)}_{k_n}$ is guaranteed by Lemma~\ref{l:10a}, since 
\begin{equation}
\frac{|x_n - y|}{k_n} \leq \frac{|x| \sqrt{n-m_n} + \epsilon m_n}{m_n\sqrt{\epsilon}} 
\leq \frac{|x| u^{-1/2} \sqrt{m_n} + \epsilon m_n}{m_n\sqrt{\epsilon}} 
\leq 2 \sqrt{\epsilon}  < r  \,,
\end{equation}
for $\epsilon$ small enough and then all $n$ large enough (above $r$ and $b$ are as in Lemma~\ref{l:10a} for $\eta = 1/3$). Then, by~\eqref{e:124} and~\eqref{e:125}, the quantity $\ol{\zeta}_{m'_n} \big( \big[x_n - k_n^{1/3} ,\, x_n + k_n^{1/3} \big]^\rmc \big)$ is at most  
\begin{equation}
\label{e:127}
\frac{ \epsilon |\zeta_{m_n}| \rme^{b_0 k_n} + \big \la \zeta_{m_n} \,,\, \epsilon \big|\zeta^{(\cdot)}_{k_n} \big| 
	1_{[-\epsilon m_n \,,\, \epsilon m_n]} \big \ra}{|\zeta_{m'_n}|} 
	\leq \frac{\epsilon |\zeta_{m_n}| \rme^{b_0 k_n}}{|\zeta_{m_n}| \rme^{b_0 k_n}}
	+ \frac{\epsilon |\zeta_{m'_n}|}{|\zeta_{m'_n}|} \,,
\end{equation}
which is smaller than $2\epsilon$. Moreover, for all $n$ large enough,
\begin{equation}
\nonumber
\begin{split}
\bbP \big( Z_{m_n'} = \zeta_{m_n'} \big) 
& = \bbP \big(Z_{m_n} = \zeta_{m_n}\big) 
\Big( \min \big\{\rme^{-\rme^{2bk_n}}, \rme^{-C k_n \rme^{k_n b_0}}\big\} \Big)^{|\zeta_{m_n}|} \\
& \geq \exp \big(\!-\!\rme^{m_n(b_0 + \epsilon)} - \rme^{m_n (b_0 + \epsilon) + 2b k_n} \big)
\geq \rme^{-\rme^{-(b_0+3b\sqrt{\epsilon}) m_n}} 
\geq \rme^{-\rme^{-(J(A,p) + C(\sqrt{\epsilon} + \epsilon'))n}} \,,
\end{split}
\end{equation}
where we have used the definition of $m_n$, the bound on $u$ in~\eqref{e:122} and the fact that $J(A,p) > 0$.

We now write,
\begin{equation}
\label{e:129}
\bbP \big( \ol{Z}_n (\sqrt{n} A) \geq p \big) 
\geq \bbP \big( Z_{m'_n} = \zeta_{m'_n} \big) 
\bbP \big( \ol{Z}_{n-m'_n} (\sqrt{n} A) \geq p \, \big|\, Z_0 = \zeta_{m'_n} \big) \,.
\end{equation}
The first probability on the right hand side, we have just bounded from below. As for the second, by~\eqref{e:127}, Lemma~\ref{l:10} and Lemma~\ref{l:11}, 
\begin{equation}
\begin{split}
\big \la \ol{\zeta}_{m_n'} ,\, 
	\nu_{n-m_n} \big(\sqrt{n}A -\cdot \big) \ra 
& \geq (1-2\epsilon) 
\inf \Big\{ \nu_{n-m_n} \big(\sqrt{n}A -y \big) :\: |y - x_n| \leq k_n^{1/3} \Big\} \\
& \geq 
\inf \Big\{ \nu \big(\sqrt{\tfrac{n}{n-m_n}}A - y \big) :\: |y - x| \leq C n^{-1/6} \Big\} - 3 \epsilon \\
& \geq \nu \big( \big(A/\sqrt{1-u}\big) - x \big) - 4\epsilon 
\, \geq \, p + \delta - 4\epsilon \,.
\end{split}
\end{equation}
The latter can be made at least $p+\delta/2$ by choosing $\epsilon$ small enough. But then, by Lemma~\ref{l:13}, the second probability on the right hand side of~\eqref{e:129} is at least
\begin{equation}
1-C \rme^{-C'|\zeta_{m'_n}| \delta^2 } \geq 1/2 \,,
\end{equation}
for $n$ large enough, since $|\zeta_{m'_n}| \geq \rme^{b_0 m'_n}$.
Combining the two lower bounds and~\eqref{e:129} we obtain
\begin{equation}
\limsup_{n \to \infty} n^{-1} \log \big[-\log \bbP \big( \ol{Z}_n (\sqrt{n} A) \geq p \big) \big]
\leq J(A,p) + C (\sqrt{\epsilon} +  \epsilon') \,.
\end{equation}
Since $\epsilon, \epsilon'$ can be made arbitrarily small, the upper bound follows.

Finally, we turn to the lower bound in the case $I(A,p) = \infty$. Fix, again, any $\epsilon > 0$ and set 
$u := \wt{J}(A,p)(1-\epsilon)$ and $m_n := \lfloor u n \rfloor$. By definition of $\wt{J}(A,p)$, for $\epsilon$ as above,  there exists $\delta > 0$ such that $\nu(A/\sqrt{1-u} - x) \leq p - \delta$ for all $x \in \bbR$. Using Lemma~\ref{l:10} and Lemma~\ref{l:11}, it follows that for $n$ large enough and any $\zeta_{m_n} \in \cZ_{m_n}$
\begin{equation}
\big \la \ol{\zeta}_{m_n} \,,\, \nu_{n-m_n} (\sqrt{n}A - \cdot) \big \ra 
\leq \sup_{x \in \bbR} \nu_{n-m_n} \big(\sqrt{n}A - x \big) \leq p - \delta / 2 \,.
\end{equation}
But then, using~\eqref{e:5.6} as in the case of $I(A,p) < \infty$, Lemma~\ref{l:13} and the fact that $|\zeta_{m_n}| \geq \rme^{b_0 m_n}$, we get for all $n$ large enough,
\begin{equation}
\bbP \big(\ol{Z}_n(\sqrt{n}A) \geq p\big) 
\leq \rme^{-C \delta^2 \rme^{b_0 m_n}}
\leq \exp \big(\!-\!\rme^{J(A,p) (1- 2\epsilon) n} \big) \,.
\end{equation}
It then follows that
\begin{equation}
\liminf_{n \to \infty} \tfrac{1}{n}
	\log \big[-\log \bbP \big( \ol{Z}_n (\sqrt{n}A) \geq p \big)
	\geq J(A,p)(1 - 2 \epsilon) \,.
\end{equation}
As $\epsilon$ can be chosen arbitrarily small, this completes the lower bound in the second case and finishes the proof. 
\end{proof}

\section{Proof of Proposition~\ref{p:1.2}}
\label{s:5}
In this section we prove Proposition~\ref{p:1.2}. We shall need the following standard fact, whose short proof is included for the sake of completeness. For what follows, let $(S_n)_{n \geq 0}$ be a simple symmetric random walks with $\pm 1$ steps, starting from $0$. Then,
\begin{lem}
With $\beta$ as in~\eqref{e:1.17} and any $x \in [0,1]$, we have
\begin{equation}
\label{e:1.20}
\lim_{n \to \infty} n^{-1} \log \bbP \big(S_k \geq x k \,,\,\, k=0, \dots, n \big) = -\beta(x) \,,  
\end{equation}
\end{lem}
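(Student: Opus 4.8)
The statement is a conditioned (or "ballot-type") large deviation estimate for the simple symmetric random walk, and the exponential rate is the familiar Cramér rate $\beta(x)$. The key point is that the event $\{S_k \ge xk \text{ for all } k \le n\}$ has the same exponential cost as the single-time event $\{S_n \ge xn\}$; the "staying above the line at all times" constraint does not change the rate, only subexponential corrections. I would prove \eqref{e:1.20} by establishing matching upper and lower bounds.

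\emph{Upper bound.} Since the event $\{S_k \ge xk,\ k=0,\dots,n\}$ is contained in $\{S_n \ge xn\}$, we immediately get
\begin{equation}
\bbP\big(S_k \ge xk,\ k=0,\dots,n\big) \le \bbP(S_n \ge xn) \le \rme^{-n\beta(x)},
\end{equation}
the last inequality being the standard Chernoff bound for $\pm 1$ i.i.d.\ variables (indeed $\beta$ is exactly the Legendre transform of $\log\cosh$, and $n^{-1}\log\bbP(S_n\ge xn)\to-\beta(x)$ by Cramér's theorem). This gives $\limsup_n n^{-1}\log\bbP(\cdots) \le -\beta(x)$ with no extra work.

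\emph{Lower bound.} Here I would use a change-of-measure (tilting) argument. Let $\theta = \theta(x) \ge 0$ be the tilt for which the tilted walk has drift exactly $x$, i.e.\ under $\bbP_\theta$ the increments are $\pm1$ with $\bbP_\theta(+1) = \rme^\theta/(\rme^\theta+\rme^{-\theta})$, chosen so that $\bbE_\theta S_1 = x$; one checks $\log$ of the normalizing constant matches so that the Radon--Nikodym density of $\bbP_\theta$ w.r.t.\ $\bbP$ on the first $n$ steps is $\rme^{\theta S_n - n\Lambda(\theta)}$ with $\Lambda(\theta)=\log\cosh\theta$ and $\theta x - \Lambda(\theta) = \beta(x)$. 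Then
\begin{equation}
\bbP\big(S_k \ge xk,\ k \le n\big)
= \bbE_\theta\Big[\rme^{-\theta S_n + n\Lambda(\theta)}\,\mathbf{1}\{S_k \ge xk,\ k\le n\}\Big].
\end{equation}
On the event in the indicator we have $S_n \ge xn$, so $\rme^{-\theta S_n} \ge \rme^{-\theta S_n}$ is not directly useful; instead I restrict further to the event $B_n := \{xk \le S_k \le xk + \sqrt n\log n,\ k\le n\}$ (or any polynomially-growing window), on which $\rme^{-\theta S_n} \ge \rme^{-\theta x n - \theta\sqrt n\log n}$. This yields
\begin{equation}
\bbP\big(S_k \ge xk,\ k \le n\big) \ge \rme^{-n\beta(x)}\,\rme^{-\theta\sqrt n\log n}\,\bbP_\theta(B_n).
\end{equation}
It therefore remains to show $\bbP_\theta(B_n)$ decays at most subexponentially — in fact it is bounded below by a constant, or at worst by $n^{-c}$. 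Under $\bbP_\theta$ the walk $S_k - xk$ is a mean-zero random walk with bounded increments, and $B_n$ asks it to stay in $[0,\sqrt n\log n]$ for $n$ steps; by Donsker-type/invariance-principle reasoning (or a direct reflection argument, or a standard estimate that a mean-zero bounded-increment walk stays in an interval of width $w\sqrt n$ for time $n$ with probability bounded below uniformly for $w$ large) this probability is bounded below by a positive constant depending only on the (large) constant in the window, hence $n^{-1}\log\bbP_\theta(B_n)\to 0$.

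\emph{Boundary cases and the main obstacle.} For $x=0$ the event is $\{S_k\ge0,\ k\le n\}$, which has probability $\sim c/\sqrt n$ by the classical ballot/reflection identity, so the rate is $0=\beta(0)$, consistent. For $x=1$ the event is exactly $\{S_k=k,\ k\le n\}$, with probability $2^{-n}$, and $\beta(1)=\log 2$, again consistent; the tilting argument degenerates here ($\theta\to\infty$) but the case is trivial by hand. The genuine obstacle is the lower-bound estimate $\liminf n^{-1}\log\bbP_\theta(B_n)\ge0$, i.e.\ controlling the probability that the tilted walk stays in a slowly-growing tube above the line for the entire time interval; I would handle this by splitting $[0,n]$ into $O(\log^2 n)$-length blocks (or blocks of length comparable to the tube width squared) and using that in each block the centered walk returns to a safe region with probability bounded away from zero, then multiplying — the number of blocks being only polylogarithmic, the product is still $\rme^{o(n)}$. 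Alternatively, one can invoke a functional CLT to say the rescaled walk converges to Brownian motion and the analogous event for Brownian motion (staying in $[0,\infty)$, say, with the endpoint near the line) has positive probability; I expect the paper to take the most elementary route available since it only needs the exponential rate.
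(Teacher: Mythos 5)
Your proof follows essentially the same route as the paper's: the upper bound is the containment $\{S_k\ge xk,\ k\le n\}\subseteq\{S_n\ge xn\}$ together with Cram\'er/Chernoff, and the lower bound is an exponential tilt by $\theta=\tanh^{-1}(x)$ followed by a ballot-type confinement estimate for the centered walk $\hat S_k=S_k-xk$. The only cosmetic difference is that you confine the walk to the tube $[xk,\,xk+\sqrt n\log n]$ at every step and sketch a block argument for the residual probability, whereas the paper only caps the endpoint by $xn+n^{2/3}$ and cites a standard estimate (Theorem~5.1.7 of Lawler--Limic) giving $\bbP^\theta(\hat S_k\ge 0\ \forall k\le n,\ \hat S_n\le n^{2/3})\ge Cn^{-1/2}$, which is all that is needed since only the exponential rate matters.
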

\begin{proof}
If $x = 1$, then the probability above is clearly equal to $2^{-n}$ in agreement with $\beta(1) = \log 2$. We therefore suppose for the rest of the argument that $x \in [0,1)$ and provide matching upper and lower bounds separately. For the upper bound, observe that the probability in question is bounded from above by $\bbP(S_n \geq xn)$. Then it follows from Cram\'{e}rs theorem (or just the exponential Chebyshev's inequality) with respect to the distribution $\frac12\delta_{-1} + \frac12\delta_{+1}$, that the left hand side (with the limit replaced by limit superior) is at most $-\beta(x)$.

For a lower bound, we can first intersect the event in~\eqref{e:1.20} with the event $\{S_n \leq xn + n^{2/3}\}$. Then, using a standard tilting argument, for any $\theta \in \bbR$ the probability in~\eqref{e:1.20} can be bounded from below by,
\begin{equation}
\exp \big(\!-n\big(\theta x - L(\theta)\big) - \theta n^{2/3} \big) \bbP^\theta \big(S_k \geq x k \,,\,\, k=0, \dots, n 
\,,\,\, S_n \leq xn + n^{2/3} \big) \,,
\end{equation}
where $\bbP^\theta$ is a probability measure, under which $S_n$ is the sum of $n$ i.i.d. random variables with distribution $\big(\rme^{-\theta}/(\rme^{\theta} + \rme^{-\theta})\big) \delta_{-1} + \big(\rme^{\theta}/(\rme^{\theta} + \rme^{-\theta})\big) \delta_{+1}$ and $L(\theta) := \log \bbE \rme^{\theta S_1}$.
Choosing $\theta = \tanh^{-1}(x) \in [0, \infty)$, gives $\bbE^\theta S_1 = L'(\theta) = x$, where $\bbE^\theta$ denotes expectation with respect to $\bbP^\theta$. Moreover, in this case $\theta x - L(\theta) = \beta(x)$. Setting also $\wh{S}_k := S_k - xk$, with this choice of $\theta$ the last display can be written as
\begin{equation}
\rme^{-n \beta(x) - \theta n^{2/3}} \bbP^\theta \big(\wh{S}_k \geq 0 \,,\,\, k=0, \dots, n 
\,,\,\, \wh{S}_n \leq n^{2/3} \big) \,.
\end{equation}
Since $(\wh{S}_k :\: k \geq 0)$ is a random walk with mean $0$ and finite non zero variance, under $\bbP^\theta$, the last probability is at least $C n^{-1/2}$ for some $C = C(\theta) > 0$ (see, e.g., Theorem 5.1.7 in~\cite{lawler2010random}). This shows the lower bound.
\end{proof}

\begin{proof}[Proof Proposition~\ref{p:1.2}]
If $b \in \{b_0, b_1\}$ then by definition of the law of $Z_1$, for all $n \geq 1$ the set $\cZ_n(b)$ consists of a single particle measure $\zeta$, in which at every generation, all particles reproduce according to $\rme^{b}\big(\tfrac12 \delta_{-1} + \tfrac12 \delta_{+1}\big)$. It follows that $\la \ol{\zeta} \,,\, x \ra = 0$ in agreement with both statements of the proposition. We shall therefore suppose for the rest of the proof that $b \in (b_0, b_1)$. Given a particle in $Z_n$, we shall refer to the sequence of heights of its ancestors from generation $0$ to generation $n$ as the ``path'' of the particle. Clearly, under the assumptions of Example~\ref{sss:3}, the set of possible paths of particles in generation $n$  identifies with all $2^n$ possible trajectories of $S_n$ as defined above.

Starting with the first statement of the proposition, first observe that $\gamma(b) = 1$ whenever $b_1 - b \geq \log 2$ and hence~\eqref{e:1.18} trivially holds in this case. Assuming therefore that $b_1 - b < \log 2$, pick any $\epsilon > 0$ small enough and use~\eqref{e:1.20} to bound the number of simple random walk paths which lead to $x \geq (\beta^{-1}(b_1 - b) + \epsilon)n$ by
\begin{equation}
2^n \bbP \big(S_n \geq (\beta^{-1}(b_1 - b) + \epsilon)n \big)
\leq \exp \Big(n \big(\log 2 - \beta\big(\beta^{-1}(b_1 - b) + \epsilon\big) + 2\delta \Big)
\leq \rme^{n (\log 2 + b - b_1 - \delta)} \,,
\end{equation}
for some $\delta > 0$ provided $n$ is large enough, where we have used the strict monotonicity of $\beta$.

Since each path can be taken by at most $(\rme^{b_1}/2)^n$, any $\zeta_n \in \cZ_n$ with $n$ large enough will satisfy
\begin{equation}
\zeta_n \big(\big[(\beta^{-1}(b_1 - b) + \epsilon)n \,,\, \infty\big)\big) \leq 
\exp \big(n (\log 2 + b - b_1 - \delta + b_1 - \log 2) \big) \leq \rme^{n(b - \delta)} \,,
\end{equation}
In particular, for any $\zeta_n \in \cZ_n(b)$ with $n$ large enough,
\begin{equation}
\label{e:5.4}
\begin{split}
\la \ol{\zeta}_n,\, x \ra & \leq n\big(\beta^{-1}(b_1 - b) + \epsilon \big)\, +\, n \ol{\zeta}_n \big(\big[n(\beta^{-1}(b_1 - b) + \epsilon) \,,\, \infty\big)\big) \\
& \leq n \big(\beta^{-1}(b_1 - b) +\epsilon \big) + n \rme^{-\delta n}
\leq n \big(\beta^{-1}(b_1 - b) + 2\epsilon \big) = n (\gamma(b) + 2\epsilon) \,,
\end{split}
\end{equation}
where we have used that the support of $\zeta_n$ is contained in $[-n, n]$. 

Now from any $\zeta \in \cZ_{m}(b)$ for some $m \geq 1$, we can construct $\zeta_n \in \cZ_n(b)$ with $n=mk$, by letting $\zeta_n := \zeta^{*k}$. Then for any $\epsilon > 0$ by choosing $k$ large enough, we can deduce from~\eqref{e:5.4} that,
\begin{equation}
m^{-1} \la \ol{\zeta} \,,\, x \ra = 
n^{-1} k \la \ol{\zeta} \,,\, x \ra =
n^{-1} \la \ol{\zeta}_n \,,\, x \ra \leq \gamma(b) + 2\epsilon \,,
\end{equation}
which shows~\eqref{e:1.18} must hold for such $\zeta$ and completes the proof of the first statement of the proposition.

Turning now to the second statement, assume first that $b_1 - b \leq \log 2$ and for $n \geq 1$ let $\zeta_n$ be the configuration of particles obtained by having all particles in generation $0 \leq k < n$, branch by $\rme^{b_1}$ if they are at height $x \geq k \beta^{-1}(b_1 - b)$ and otherwise by $\rme^{b_0}$. Letting $b' \in [b_0, b_1]$ be such that $\zeta_n \in \cZ_n(b')$, it is enough to show that for any $\epsilon > 0$ if $n$ is large enough then
\begin{equation}
\label{e:1.21}
b' \in [b-\epsilon, b+\epsilon]
\quad, \qquad
n^{-1} \la \ol{\zeta}_n\,, x \ra \geq \beta^{-1}(b_1 - b) - \epsilon \,.
\end{equation}
To show~\eqref{e:1.21}, first use~\eqref{e:1.20} to conclude that with any $\epsilon > 0$ as long as $n$ is large enough,
\begin{equation}
\label{e:1.23}
\begin{split}
\zeta_n \big([n\beta^{-1}(b_1-b), \infty)\big) & \leq
2^n \bbP \big(S_n \geq n\beta^{-1}(b_1 - b)\big) \big(\rme^{b_1}/2\big)^n \\
& \leq \exp \Big(n\big(\log 2 - \beta\big(\beta^{-1}(b_1-b)\big) + \epsilon + b_1 - \log 2\big)\Big)
\leq \rme^{n(b+\epsilon)} \,,
\end{split}
\end{equation}
and
\begin{equation}
\label{e:1.24}
\begin{split}
\zeta_n \big([n\beta^{-1}(b_1-b), \infty)\big) & \geq
2^n \bbP \big(S_k \geq k\beta^{-1}(b_1 - b) \,,\,\, k=0, \dots, n\big) \big(\rme^{b_1}/2\big)^n \\
& \geq \exp \Big(n\big(\log 2 - \beta\big(\beta^{-1}(b_1-b)\big) - \epsilon + b_1 - \log 2\big)\Big)
\geq \rme^{n(b-\epsilon)} \,.
\end{split}
\end{equation}

At the same time, by considering the last generation $k$ when the path of particle was at height $x \geq k \beta^{-1}(b_1-b)$, we can write for any $y \geq 0$,
\begin{equation}
\label{e:1.25}
\begin{split}
\zeta_n\big(\big(\!-\!\infty \,,\,\, n\beta^{-1}(b_1 - b) - y\big] \big) & \leq 
\sum_{k=0}^{n-\lfloor y/2 \rfloor} 2^n \bbP \big(S_k \geq k \beta^{-1}(b_1-b) \big) \big(\rme^{b_1}/2\big)^{k+1}
\big(\rme^{b_0}/2\big)^{n-k} \\
& \leq \rme^{b_1}/2 \sum_{k=0}^{n-\lfloor y/2 \rfloor} \exp \Big(k\big(-\beta\big(\beta^{-1}(b_1 - b)\big) + \epsilon + b_1 \big)
+ (n-k) b_0 \Big)  \\
& = \rme^{n(b+\epsilon)+b_1}/2 \, \sum_{k=0}^{n-\lfloor y/2 \rfloor } \rme^{-(n-k) (b - b_0)}
\leq \rme^{n(b+2\epsilon) - y(b-b_0)/2} \,.
\end{split}
\end{equation}
Above we have used that $b > b_0$ and that $n-k \geq y/2$ for the path to reach height $x \leq n\beta^{1}(b_1 - b) - y$ at time $n$. As before, $\epsilon > 0$ can be arbitrarily small as long as $n$ as large enough.

Now, the upper bounds in~\eqref{e:1.23} and in~\eqref{e:1.25} with $y=0$, show that $|\zeta_n| \leq \rme^{n(b+3 \epsilon)}$, while the lower bound in~\eqref{e:1.24} yields $|\zeta_n| \geq \rme^{n(b - \epsilon)}$. Altogether this shows the first part of~\eqref{e:1.21} with $3\epsilon$ in place of $\epsilon$. Using the lower bound on $|\zeta_n|$ together with~\eqref{e:1.25} then gives for all $y \geq 0$,
\begin{equation}
\ol{\zeta}_n \big(\big(\!-\!\infty \,,\, n\beta^{-1}(b_1 - b)-y \big]\big) 
\leq \rme^{3\epsilon n - y(b-b_0)/2} \,.
\end{equation} 
Plugging in $y=\epsilon' n$ with $\epsilon' = 8\epsilon/(b-b_0)$ and using that the bound on the support of $\zeta_n$, we get
\begin{equation}
\nonumber
\begin{split}
\la \ol{\zeta}_n \,,\, x \ra
& \geq \big(n \beta^{-1}\big(b_1 - b) - y\big) \ol{\zeta}_n 
\big( \big(n \beta^{-1}\big(b_1 - b) - y \,,\, \infty \big) \big)
- n \ol{\zeta}_n 
\big(\big(\!-\!\infty ,\, n \beta^{-1}\big(b_1 - b) - y \big] \big) \\
& \geq n \big(\beta^{-1}\big(b_1 - b) - \epsilon'\big) \big( 1 - \rme^{-\epsilon n} \big)
- n  \rme^{-\epsilon n} \geq n \big(\beta^{-1}\big(b_1 - b) - 2\epsilon'\big) \,.
\end{split}
\end{equation}
for all $n$ large enough, which shows the second part of~\eqref{e:1.21} with $2\epsilon'$ in place of $\epsilon$ and completes the proof for the second statement in the case $b_1 -b \leq \log 2$.

When $b_1 - b > \log 2$, we first find $\theta \in [0,1)$ such that $\theta \rme^{b_0} + (1-\theta) \rme^{b_1} = \rme^{b + \log 2}$. This is always possible since $b_0 < b + \log 2 < b_1$. 
Then for any $\epsilon' > 0$, if $M$ is large enough 
\begin{equation}
\label{e:1.29}
\rme^{b + \log 2} \leq 
\frac{\lceil M\theta \rceil \rme^{b_0} + (M - \lceil M\theta \rceil) \rme^{b_1}}{M} \leq \rme^{b + \log 2 + \epsilon'}  \,.
\end{equation}
This shows that as soon as there are $M$ particles, we can have them multiply in a way which is equivalent to letting each of them reproduce according to the (non-point) measure $\rme^{b'_1} \big(\tfrac12 \delta_- + \tfrac12 \delta_+ \big)$ for some $b'_1 \in [b+\log 2 \,,\, b+\log 2 + \epsilon']$. 

For any $n' \geq 1$, we now construct $\zeta'_{n'}$ in a similar way to the one used to construct $\zeta_n$ in the proof of the lower bound above, albeit with two notable differences. First, we start with $M$ particles at height $0$, where $M$ is large enough so that~\eqref{e:1.29} holds. Second, instead of reproducing by $\rme^{b_1}$, a particle in generation $0 \leq k < n$ now reproduces according to~\eqref{e:1.29} whenever its height $x = k\beta^{-1}(\log 2) = k$ and otherwise according to $\rme^{b_0}$. It is not difficult to see that the proof of~\eqref{e:1.21}
can be used almost without a change to show that $\zeta'_{n'}/M \in \cZ_{n'}(b')$ with
\begin{equation}
\label{e:1.30}
b' \in [b-\epsilon, b+\epsilon]
\quad, \qquad
n'^{-1} \la \ol{\zeta'}_n\,, x \ra \geq \beta^{-1}(\log 2) - \epsilon = 1 - \epsilon \,,
\end{equation}
for arbitrarily small $\epsilon > 0$ as long as $n'$ is large enough and provided we choose $M$ as large as needed.

Next, we construct $\zeta_{n'+2m}$ using $\zeta'_{n'}$ by having all particles reproduce at rate $\rme^{b_0}$ for the first $2m$ generations and then letting all particles at height $0$ reproduce according to $\zeta'_{n'}$ for the next $n'$ generations, while the rest keep reproducing by $\rme^{b_0}$. Clearly, by choosing $m$ large enough, we can make the number of particles at at height $0$ in generation $2m$ be equal to some $M$ for which~\eqref{e:1.30} holds with any given $\epsilon > 0$ and all $n'$ large enough.

Now, setting $n = n'+2m$, we have for all $n$ large 
\begin{equation}
\rme^{n(b-2\epsilon)} \leq 
\rme^{n'(b-\epsilon)} \leq |\zeta'_{n'}| \leq \, |\zeta_n| \,
\leq |\zeta'_{n'}| + \rme^{n b_0} \leq 
M \rme^{n'(b + \epsilon)} + \rme^{n b_0} \leq
\rme^{n(b + 2\epsilon)} \,,
\end{equation}
and
\begin{equation}
\la \ol{\zeta}_n \,,\, x \ra \geq 
\la \ol{\zeta}'_{n'} \,,\, x \ra - n \rme^{n b_0}/{|\zeta_n|}
\geq n'(1-\epsilon) - n\rme^{-n(b - b_0- 2\epsilon)} \geq n(1-2\epsilon) \,,
\end{equation}
provided $\epsilon$ is small enough so that $b - b_0 - 2\epsilon > 0$. This shows that $\zeta_n \in \cZ_n(b'')$ with 
\begin{equation}
\label{e:1.32}
b'' \in [b-\epsilon, b+\epsilon]
\quad, \qquad
n^{-1} \la \ol{\zeta}_n\,, x \ra \geq 1 - \epsilon \,,
\end{equation}
for arbitrarily small $\epsilon > 0$ and then $n$ large enough. This shows that the second statement of the proposition also holds when $b_1 -b > \log 2$ as $\gamma(b) = 1$ in this case, and finishes the proof.
\end{proof}

\section*{Acknowledgments}
The work of O.L. was supported in part by the European Union's - Seventh Framework Program (FP7/2007-2013) under grant agreement no 276923 -- M-MOTIPROX. E.T. was supported by a Technion MSc. fellowship.

\bibliographystyle{abbrv}
\bibliography{BRW_DistLD}
\end{document}